\date{}
\newtheorem{theorem}{Theorem}[section]
\newtheorem{corollary}{Corollary}[section]
\newtheorem{lemma}{Lemma}[section]
\newtheorem{remark}{Remark}[section]
\begin{document}

\title[Value distribution of $q$-differences in several variables]{Value distribution of $q$-differences of meromorphic functions in several complex variables}


\author[Tingbin Cao]{Tingbin Cao}
\address[Tingbin Cao]{Department of Mathematics, Nanchang University, Jiangxi 330031, P. R. China}
\email{tbcao@ncu.edu.cn}
\thanks{The first author was supported by the National Natural Science Foundation of China (\#11871260, \#11461042), the outstanding young talent assistance program of Jiangxi Province (\#20171BCB23002) in China.}

\author[Risto Korhonen]{Risto Korhonen}
\address[Risto Korhonen]{Department of Physics and Mathematics, University of Eastern Finland, P.O. Box 111, FI-80101 Joensuu, Finland}
\email{risto.korhonen@uef.fi}
\thanks{The second author was supported in part by the Academy of Finland grants (\#286877) and (\#268009).}

\subjclass[2010]{Primary 32H30; Secondary 30D35, 39A14}
\keywords{Logarithmic derivative lemma; Second main theorem; Partial $q$-difference equations; Picard type theorem; Tumura-Clunie theorem}
\date{}


\begin{abstract}
In this paper, we study $q$-difference analogues of several central results in value distribution theory of several complex variables such as  $q$-difference versions of the logarithmic derivative lemma, the second main theorem for hyperplanes and hypersurfaces, and a Picard type theorem. Moreover, the Tumura-Clunie theorem concerning partial $q$-difference polynomials is also obtained. Finally, we apply this theory to investigate the growth of meromorphic solutions of linear partial $q$-difference equations.
\end{abstract}

\maketitle


\section{Introduction}
At the beginning of the twentieth century, the foundations of the  general theory of the linear difference equations in one independent variable was built by N\"{o}rlund \cite{norlund}, Carmichael \cite{carmichael} and Pincherle \cite{pincherle}, and of the linear $q$-difference equations by Jackson \cite{jackson}, Mason \cite{mason-1914, mason} and Adams \cite{adams-1929}. Meanwhile, the general theory of difference equations in more than one independent variable, or of linear partial difference equations, was studied by Adams \cite{adams-1, adams-2, adams-3,adams-4}. For this background of linear partial $q$-difference equations, we refer to a review article of Adams \cite{adams-1931}.\par

Since 1925, when R. Nevanlinna \cite{nevanlinna-2} established the theory, the value distribution theory of meromorphic functions of one complex variable has found numerous applications in the theory of complex differential equations. The most striking result, in the Nevanlinna theory of value distribution, is the second main theorem, which is an inequality relating two leading quantities in the value distribution theory, namely, the characteristic function, which measures the rate of growth of a function or a map, and the counting function, which tells the size of the preimages of points or sets. Later on,  many forms of the second main theorem for holomorphic maps, as well as meromorphic maps, on various contexts were found. For example, in 1933, H. Cartan \cite{cartan} extended Nevanlinna's second main theorem for the case of holomorphic curves into complex projective spaces sharing hyperplanes in general position  (see also the 2004 review paper by Gundersen and Hayman \cite{gundersen-hayman} where Cartan's second main theorem is applied to study complex equations.) For the background of the Nevanlinna theory of one and several complex variables, we refer to \cite{hayman, yang, ru-1, noguchi-winkelmann}. It is well known that the logarithmic derivative lemma for meromorphic functions $f$ in one and several complex variables
$m(r, \frac{f'}{f})=O(\log (rT_{f}(r)))$ holds outside a possible set of finite linear measure. This lemma plays a key role in proving the celebrated forms of second main theorem and also has numerous applications in complex differential equations (see, e.g.,  \cite{laine} and references therein). \par

In recent years, in order to consider the growth of entire or meromorphic solutions of complex difference equations, the difference analogues of the second main theorem for meromorphic functions and maps were established. In 2006, Halburd and the second author \cite{halburd-korhonen-1} obtained a difference analogue of the second main theorem for meromorphic functions in the complex plane. Halburd, the second author and Tohge~\cite{halburd-korhonen-tohge-1}, and Wong, Law and Wong \cite{wong-law-wong-1}  obtained, independently, a difference analogue of the second main theorem for holomorphic curves into complex projective spaces intersecting with hyperplanes in general position. Recently, the second main theorem for the case of slowly moving periodic hyperplanes and slowly moving periodic hypersurfaces were obtained by the second author, Li and Tohge~\cite{korhonen-li-tohge}, and the first author and Nie \cite{cao-nie}, respectively. A tropical variant of the Cartan second main theorem for differences was recently found in \cite{korhonen-tohge}. The proofs of all these versions of the difference second main theorem rely on a difference analogue of logarithmic derivative lemma, which was originally discovered in two independent studies by Halburd and the second author \cite{halburd-korhonen-1}, and Chiang and Feng \cite{chiang-feng}, respectively. This lemma has also been one of the key tools in the study of meromorphic solutions of difference equations (see, e. g., \cite{chen-book} and references therein).\par

The main purpose of this paper is to establish several key results in the value distribution theory of $q$-differences in several complex variables, and then apply this theory  into the study of entire or meromorphic solutions of linear partial $q$-difference equations. Note that there is a difficulty (mentioned in \cite{cao}) to obtain $q$-difference analogue of the logarithmic derivative lemma for meromorphic functions in several complex variables if adopting the method due to Biancofiore and W. Stoll \cite{biancofiore-stoll}. In this paper, combining another method due to Stoll  \cite{stoll} and Fujimoto \cite{fujimoto-7} with the known method of the present authors \cite{korhonen-2012, cao-korhonen, cao-xu}, we firstly give a $q$-difference analogue of the logarithmic derivative lemma for meromorphic functions with zero order in several complex variables
\begin{equation*}
m\left(r, \frac{f(qz_{1}+c_{1}, \ldots, qz_{m}+c_{m} )}{f(z_{1}, \ldots, z_{m})}\right)=o(T_{f}(r))
\end{equation*}
on a set of logarithmic density one,  where $q\in\mathbb{C}\setminus\{0, 1\},$ $c=(c_{1}, \ldots, c_{m})\in\mathbb{C}^{m}$ (see in Section~\ref{qDifflemma_sec}). Then by this $q$-difference version of logarithmic derivative lemma, we obtain $q$-difference versions of the second main theorem for hyperplanes and hypersurfaces in Section~\ref{q_diff_hyperplane_sec}. As an application of our second main theorem, we get a $q$-difference Picard type theorem corresponding to Fujimoto and Green's result in classical value distribution theory in Section~\ref{picard_sec}. This implies that \emph{if a meromorphic function $f(z)$ on $\mathbb{C}^{m}$ and its shift $f(qz+c)$ share three distinct values (counting multiplicities) on $\mathbb{C}\cup\{\infty\},$ then $f(z)\equiv f(qz+c).$}  In Section~\ref{TCsec}, we discuss the $q$-difference counterpart of the Tumura-Clunie theorem in several complex variables concerning partial $q$-difference polynomials
\begin{eqnarray*}
G(z, f)=\sum_{\lambda\in J} b_{\lambda}(z)\prod_{j=1}^{\tau_{\lambda}}f(q_{\lambda, j}z+c_{\lambda,j})^{\mu_{\lambda,j}}.
\end{eqnarray*} Finally, in Section~\ref{LEsec}, we apply the $q$-difference value distribution theory to study the growth of meromorphic solutions of the linear partial $q$-difference equation of the form \begin{eqnarray*}
\sum_{j=0}^{n}A_{j}(z_{1}, \ldots, z_{m})f(q_{j}z_{1}+c_{j,1}, \ldots, q_{j}z_{m}+c_{j, m})=F(z_{1}, \ldots, z_{m})
\end{eqnarray*} with meromorphic coefficients for $m$ independent complex variables $z_{1},$ $\ldots,$ $z_{m},$ where $q_{j}\in\mathbb{C}\setminus\{0, 1\},$ $c_{j}=(c_{j,1}, \ldots, c_{j, m})\in\mathbb{C}^{m}$ for all $j=1, \ldots, n.$ Necessary notation has been given in the following section.

\section{Preliminaries}


\noindent\textbf{2.1} Set $\|z\|=(|z_{1}|^{2}+\cdots+|z_{m}|^{2})^{1/2}$ for $z=(z_{1},\cdots,z_{m})\in\mathbb{C}^{m},$  for $r>0,$ define
\begin{equation*}
B_{m}(r):={\{z\in\mathbb{C}^{m}:\|z\|\leq r}\},\quad
S_{m}(r):={\{z\in\mathbb{C}^{m}:\|z\|=r}\}. \end{equation*}
Let $d=\partial+\overline{\partial}, \quad d^{c}=(4\pi\sqrt{-1})^{-1}(\partial-\overline{\partial}).$ Thus $dd^{c}=\frac{\sqrt{-1}}{2\pi}\partial\overline{\partial}.$ Write
\begin{equation*}
\sigma_{m}(z):=(dd^{c}\log\|z\|^{2})^{m-1},\quad
 \eta_{m}(z):=d^{c}\log\|z\|^{2}\wedge \sigma_{m}(z)\end{equation*}
  for $z\in\mathbb{C}^{m}\setminus{\{0}\}.$\par

For a divisor $\nu$ on $\mathbb{C}^{m}$ we define the following counting function of $\nu$ by

\begin{equation*}
N(r, \nu)=\int_{1}^{r}\frac{n(t)}{t^{2m-1}}dt\quad
(1<r<\infty),\end{equation*} where \begin{eqnarray*} n(t)=\left\{
                                    \begin{array}{ll}
                                      \int_{|\nu|\cap B(t)}\nu(z)\sigma_{m}(z), & \hbox{if $m\geq 2;$} \\
                                      \sum_{|z|\leq t}\nu(z), & \hbox{if $m=1.$}
                                    \end{array}
                                  \right.
\end{eqnarray*}

Let $\varphi(\not\equiv 0)$ be an entire function on $\mathbb{C}^{m}.$ For $a\in\mathbb{C}^{m},$ we write $\varphi(z)=\sum_{i=0}^{\infty}P_{i}(z-a),$ where the term $P_{i}$ is a homogeneous polynomial of degree~$i.$ We denote the zero-multiplicity of $\varphi$ at $a$ by $\nu_{\varphi}(a)=\min{\{i:P_{i}\not\equiv 0}\}.$ Thus we can define a divisor $\nu_{\varphi}$ such that $\nu_{\varphi}(z)$ equals the zero multiplicity of $\varphi$ at $z$ in the sense of \cite[Definition 2.1]{fujimoto-2} whenever $z$ is a regular point of an analytic set  $|\nu_{\varphi}|:=\overline{\{z\in \mathbb{C}^{m}: \nu_{\varphi}(z)\neq 0\}}.$ \par

Let $h$ be a nonzero meromorphic function on $\mathbb{C}^{m}$ with $h=\frac{h_{0}}{h_{1}},$ where $h_{0}$ and $h_{1}$ are two entire functions on $\mathbb{C}^{n}$ such that $\dim(h_{0}^{-1}(0)\cap h_{1}^{-1}(0))\leq m-2.$ We define $\nu_{h}^{0}:=\nu_{h_{0}}$ and $\nu_{h}^{\infty}:=\nu_{h_{1}}.$\par

For a meromorphic function $h$ on $\mathbb{C}^{m},$ we usually write $N(r, \frac{1}{h}):=N(r, \nu_{h}^{0})$ and $N(r, h):=N(r, \nu_{h}^{\infty}).$  The Jensen's Formula is given as
$$N(r, \frac{1}{h})-N(r, h)=\int_{S_{m}(r)}\log|h|\eta_{m}(z)-\int_{S_{m}(1)}\log|h|\eta_{m}(z).$$
The proximity function of $h$ is defined by $$m(r, h)=\int_{S_{m}(r)}\log^{+}|h(z)|\eta_{m}(z),$$ where $\log^{+}x:=\max\{\log x, 0\}$ for any $x>0.$ \par


\medskip

\noindent\textbf{2.2} A meromorphic mapping $f:\mathbb{C}^{m}\rightarrow\mathbb{P}^{n}(\mathbb{C})$ is a holomorphic mapping from $U$ into $\mathbb{P}^{n}(\mathbb{C}),$ where $U$  can be chosen so that $K_f\equiv \mathbb{C}^{m}\setminus U$ is an analytic subvariety of $\mathbb{C}^{m}$ of codimension at least $2.$ Furthermore $f$ can be represented by a holomorphic mapping of $\mathbb{C}^{m}$ to $\mathbb{C}^{n+1}$ such that $$K_{f}=\{z\in\mathbb{C}^{m}: f_{0}(z)=\cdots=f_{n}(z)=0\},$$ where $f_{0}, \ldots, f_{n}$ are holomorphic functions on $\mathbb{C}^{m}.$ We say that $f=[f_{0}, \ldots, f_{n}]$ is a reduced representation of $f$ (the only factors common to $f_{0}, \ldots, f_{n}$ are units).  If $g=hf$ for $h$ any quotient of holomorphic functions on $\mathbb{C}^{m},$ then $g$ will be called a representation of $f$ (e.g. reduced if and only if $h$ is holomorphic and a unit). Set $\|f\|=(\sum_{j=0}^{n}|f_{j}|^{2})^{\frac{1}{2}}.$ The Nevanlinna-Cartan's characteristic function for a meromorphic mapping $f$ is defined by
\begin{eqnarray*}
T_{f}(r)
&=&\int_{S_{m}(r)}\log\|f\|\eta_{m}(z)-
 \int_{S_{m}(1)}\log\|f\|\eta_{m}(z)\\
 &=&\int_{S_{m}(r)}\log\max\{|f_{0}|, \ldots, |f_{n}|\}\eta_{m}(z)+O(1)\quad(r>r_{0}>1).\end{eqnarray*}
Note that $T_{f}(r)$ is independent of the choice of the reduced representation of $f.$ The order of $f$ is defined by
$$\zeta(f):=\limsup_{r\rightarrow\infty}\frac{\log^{+} T_{f}(r)}{\log r}.$$ \par

For $q=(q_{1}, q_{2}, \ldots, q_{m})\in\mathbb{C}^{m}\setminus\{0\},$ we denote by $\mathcal{M}$ the set of all meromorphic functions over $\mathbb{C}^{m},$ by $\mathcal{P}_q$ the set of all meromorphic functions $h\in\mathcal{M}$ satisfying $h(q_{1}z_{1}+c_{1}, \ldots, q_{m}z_{m}+c_{m})\equiv h(z_{1}, \ldots, z_{m})$ for some fixed value $c=(c_{1}, \ldots, c_{m})\in\mathbb{C}^{m},$ and by $\mathcal{P}_{q}^{0}$ the set of all meromorphic functions in $\mathcal{P}_{q}$ and having zero order. Obviously, then we have the inclusions $\mathcal{M}\supset \mathcal{P}_q \supset \mathcal{P}_{q}^{0} \supset \mathbb{C}.$ As a matter of fact,  in  the special case of $c=(0,\ldots, 0)$ in $\mathbb{C}^{m},$ if, for any $j\in\{1,\ldots,m\}$, either $|q_j|\neq 1$, or $|q_j|=1$ and the argument of $q_j$ is not rational, then we have $\mathcal{P}_q=\mathcal{P}_q^0=\mathbb{C}.$  \par

We say that a meromorphic mapping $f$ from $\mathbb{C}^{m}$ into $\mathbb{P}^{n}(\mathbb{C})$ with a reduced representation $[f_{0},\ldots, f_{n}]$ is algebraically (linearly) nondegenerate over $\mathcal{P}^{0}_{q}$  if the entire functions $f_{0},$ $\ldots,$ $f_{n}$ are algebraically (linearly) independent over $\mathcal{P}^{0}_{q},$ and say that $f$ is algebraically (linearly) nondegenerate over $\mathbb{C}$  if the entire functions $f_{0},$ $\ldots,$ $f_{n}$ are algebraically (linearly) independent over $\mathbb{C}.$ \par


\medskip

\noindent\textbf{2.3} A hypersurface $Q$ with degree $d$ in $\mathbb{P}^{n}(\mathbb{C})$ is given by
$$Q=\{[x_0:\cdots:x_n]:\sum_{I\in\mathcal{J}_d}a_{I}x^I=0\},$$ where $\mathcal{J}_d=\{(i_0,\cdots, i_n)\in \mathbb{N}_0^{n+1}:i_0+\cdots+i_n=d\},$ $I=(i_0, \cdots, i_n)\in \mathcal{J}_{d},$ $x^I=x_0^{i_0}\cdots x_n^{i_n}$ and $(x_0:\cdots:x_n)$ are homogeneous coordinates of $\mathbb{P}^n(\mathbb{C})$. Denote by $D$ the homogeneous polynomial associated with the hypersurface $Q.$ If $d$ is $1$, then the hypersurface reduces to one hyperplane, denoted by $H,$ as  $$H=\{[x_0:\cdots:x_n]:a_{0}x_{0}+\cdots+a_{n}x_{n}=0\}.$$ Set
$$Q(f(z)):=D\circ f(z)=\sum_{I\in\mathcal{J}_d}a_{I}f^I,$$ where $f^I=f_0^{i_0}\cdots f_n^{i_n}.$ We recall the proximity function of $f$ intersecting $Q$ defined as $$m_{f}(r, Q)=\int_{S_{m}(r)}\log\frac{\|f(z)\|^{d}\|a\|^{d}}{|Q(f(z))|}\eta_{m}(z),$$ where $\|a\|=(\sum_{I\in \mathcal{J}_{d}}|a_{I}|^{2})^{1/2}.$ Throughout this paper, we usually assume that $f(\mathbb{C}^{m})\not\subset Q$ without a special statement. Then we have the first main theorem as follows:
$$m_{f}(r, Q)+N\left(r, \frac{1}{Q(f)}\right)=dT_{f}(r)+O(1).$$\par

Now let $\{Q_i\}_i^p$ be hypersurfaces of $\mathbb{P}^n(\mathbb{C}).$ We say that the family of the hypersurfaces $\{Q_j\}_{j=1}^p$ are in general position in $\mathbb{P}^n(\mathbb{C})$ if for any subset $R\subset Q$ with the cardinality $\sharp R=n+1,$  we have
$$\bigcap_{j\in R}Q_j=\emptyset.$$ That is, any $n+1$ homogeneous polynomials (forms) of $\{D_j(z)\}_{j=1}^{p}$ associated with the hypersurfaces $\{Q_{j}\}_{j=1}^{p}$ are linearly independent over $\mathbb{C}.$ \par


\medskip

\noindent\textbf{2.4} Let $f$ be a meromorphic mapping from $\mathbb{C}^{m}$ into $\mathbb{P}^{n}(\mathbb{C}).$ In what follows, $0=(0,\ldots,0)\in\mathbb{C}^{m},$ $1=(1,\ldots,1)\in\mathbb{C}^{m}.$ For $q=(q_{1}, \ldots, q_{m})$ and  $z=(z_{1}, \ldots, z_{m}),$ we write $q+z=(q_{1}+z_{1},\ldots,q_{m}+z_{m}),$ $q\cdot z=(q_{1}z_{1},\ldots,q_{m}z_{m}).$
Denote the $q$-difference operator by
\begin{eqnarray*}\Delta_{q}f&:=&f(q\cdot z+c)-f(z)\\&=&f(q_{1}z_{1}+c_{1}, q_{2}z_{2}+c_{2}, \ldots, q_{m}z_{m}+c_{m})-f(z_{1}, z_{2}, \ldots, z_{m}).\end{eqnarray*}

For $q\in\mathbb{C}^{m}\setminus\{0\},$ and a meromorphic mapping $f:\mathbb{C}^{m}\rightarrow\mathbb{P}^{n}(\mathbb{C})$ with a reduced representation $f=[f_{0}, \ldots, f_{n}],$ we use the short notations
$$\overline{f}^{[0]}=f:=f(z), \,\,\,\overline{f}^{[1]}=\overline{f}:=f(q\cdot z+c),\,\,\, \overline{f}^{[k]}:=\overline{\overline{f}^{[k-1]}}.$$ Then, analogously to the definitions of the Wronskian and the Casoratian determinants, the $q$-Casorati determinant of $f$ is defined by
$$ C(f)=C(f_0,\cdots,f_n)=\left|\begin{array}{cccc}
f_0 & f_1 & \cdots & f_n\\
\overline{f_0} & \overline{f_1} & \cdots & \overline{f_n}\\
\vdots & \vdots & \vdots & \vdots\\
\overline{f_0}^{[n]} & \overline{f_1}^{[n]} & \cdots & \overline{f_n}^{[n]}
\end{array}\right|_{(n+1)\times(n+1)}.$$ Given a real positive integer $d,$ $\mathcal{J}_d=\{(i_0, \cdots, i_n)\in \mathbb{N}_0^{n+1}: i_0+\cdots+ i_n=d\}.$ For any $I_{j}=(i_{j0}, \ldots, i_{jn})\in\mathcal{J}_d$ where $j\in\{1,\ldots, M\},$ we set $f^{I_{j}}=f_0^{i_{j0}}\cdots f_n^{i_{jn}}.$ Then $q$-Casorati determinant of $f$ is given as
\begin{eqnarray*} \tilde{C}(f)=C(f^{I_{1}}, \ldots, f^{I_{M}})=\left|\begin{array}{cccc}
f^{I_{1}} & f^{I_{2}} & \cdots & f^{I_{M}}\\
\overline{f^{I_{1}}} & \overline{f^{I_{2}}} & \cdots & \overline{f^{I_{M}}}\\
\vdots & \vdots & \vdots & \vdots\\
\overline{f^{I_{1}}}^{[M-1]} & \overline{f^{I_{2}}}^{[M-1]} & \cdots & \overline{f^{I_{M}}}^{[M-1]}
\end{array}\right|_{M\times M}.\end{eqnarray*} Clearly, when $d=1$ and $M=n+1,$ we have $|\tilde{C}(f)|=|C(f)|.$ Moreover, one can rearrange the order of $I_{1}, \ldots, I_{M}$ such that $\tilde{C}(f)=C(f)$ whenever $d=1$ and $M=n+1.$\par

\section{$q$-Difference analogue of the logarithmic derivative lemma in several complex variables}\label{qDifflemma_sec}
The original version of the logarithmic derivative lemma in one complex variable plays a key role in Nevanlinna theory for meromorphic functions in the complex plane, and is widely used in value distribution
of meromorphic functions, differential equations in the
complex plane, and elsewhere. The first generalization of
the logarithmic derivative lemma from one variable to several complex
variables was given by Vitter \cite{vitter}, another proof was given by
Biancofiore and Stoll \cite{biancofiore-stoll}.  \par

In \cite[Theorem 1.1]{barnett-halburd-korhonen-morgan}, a $q$-difference analogue of  the logarithmic derivative lemma for meromorphic functions on the complex plane $\mathbb{C}$ was obtained. Note that the assumption of $f$ with zero order is sharp.\par

\begin{theorem}[\cite{barnett-halburd-korhonen-morgan}] \label{T-3.1} Let $f$ be a non-constant zero-order meromorphic function on $\mathbb{C},$ and $q\in \mathbb{C}\setminus\{0, 1\}.$ Then \begin{equation*}
m\left(r, \frac{f(qz)}{f(z)}\right)=o(T_{f}(r))
\end{equation*} on a set of logarithmic density $1.$
 \end{theorem}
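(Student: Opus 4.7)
The plan is to base the proof on the Poisson--Jensen representation of $f$ on a circle $|w|=R$ with $R$ suitably chosen as a function of $r$, and then exploit the zero-order hypothesis via a Borel--Nevanlinna type estimate.

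First, the Poisson--Jensen formula writes
\[
\log|f(w)|=\frac{1}{2\pi}\int_0^{2\pi}\operatorname{Re}\!\left(\frac{Re^{i\phi}+w}{Re^{i\phi}-w}\right)\log|f(Re^{i\phi})|\,d\phi+\sum_{|a_\mu|<R}\log|B_{a_\mu}(w)|-\sum_{|b_\nu|<R}\log|B_{b_\nu}(w)|,
\]
where $B_a(w)=R(w-a)/(R^2-\overline{a}w)$ and $a_\mu,b_\nu$ are the zeros and poles of $f$ with multiplicity. Evaluating at $w=qz$ and at $w=z$ and subtracting gives an exact expression for $\log|f(qz)/f(z)|$. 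Taking $\log^+$ and integrating over $|z|=r$ then splits $m(r,f(qz)/f(z))$ into a Poisson-kernel contribution and a sum of Blaschke-factor differences over zeros and poles.

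Second, I would bound each piece separately. The Poisson-kernel part uses the elementary estimate $|P(qz,\phi,R)-P(z,\phi,R)|\le C|q-1|rR/(R-|q|r)^2$ together with $\int_0^{2\pi}|\log|f(Re^{i\phi})||\,d\phi\le 2T(R,f)+O(1)$ to yield a bound of order $|q-1|rR/(R-|q|r)^2\cdot T(R,f)$. For the Blaschke factors, I would use that when $R$ is much larger than $|q|r$, the ratio $B_{a}(qz)/B_{a}(z)$ is close to $(qz-a)(R^2-\overline{a}z)/\bigl((z-a)(R^2-\overline{a}qz)\bigr)$, and a term-by-term application of $\int_0^{2\pi}\log^+\frac{1}{|re^{i\theta}-a|}\,d\theta=O(\log^+|a|+1)$ (plus a Boutroux--Cartan lemma to avoid radii where $r$ or $|q|r$ is abnormally close to some $|a_\mu|$ or $|b_\nu|$) bounds the zero/pole contribution by a quantity essentially proportional to $n(R,f)+n(R,1/f)$, outside an exceptional $r$-set of finite logarithmic measure.

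Third, choose $R=R(r)$ to balance the bounds against the zero order. A Borel--Nevanlinna lemma for functions of zero order---of the form $T(r(1+1/\psi(T(r,f))),f)=(1+o(1))T(r,f)$ outside a set of finite logarithmic measure, for any prescribed increasing unbounded $\psi$---lets one take $R$ only marginally larger than $|q|r$ and make both the kernel bound and the counting bound $o(T(r,f))$ simultaneously. The counting function is controlled via $n(R,\cdot)\log|q|\le T(|q|R,f)-T(R,f)+O(1)$ (assuming $|q|>1$; the cases $|q|<1$ and $|q|=1$ are handled analogously, the latter reducing to a rotation), and zero order forces this difference to be $o(T(r,f))$ on a set of logarithmic density one. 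The main obstacle is the simultaneous management of the three exceptional sets---the Boutroux--Cartan radii covering, the Borel--Nevanlinna growth control for $T$, and the chosen balance of $R$---but a careful combination ensures that their union has logarithmic density zero, which is exactly what is required for the conclusion.
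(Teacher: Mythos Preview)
The paper does not prove Theorem~\ref{T-3.1}; it is quoted verbatim from \cite{barnett-halburd-korhonen-morgan}. However, the structure of that original proof is visible in the paper's proof of Theorem~\ref{q-LDL}, where the one-variable case is invoked via \cite[Lemma~5.1]{barnett-halburd-korhonen-morgan}: one shows directly that for any $M>\max\{1,|q|\}$,
\[
m\!\left(r,\frac{f(qz)}{f(z)}\right)\le \frac{O(1)}{M}\Bigl(T(Mr,f)+\log^{+}\frac{1}{|f(0)|}\Bigr),
\]
and then chooses $M=2^{n}$ along a sequence, using Hayman's growth lemma and the zero-order hypothesis (via \cite[Lemma~5.3]{barnett-halburd-korhonen-morgan}) to absorb $T(Mr,f)$ into $2T(r,f)$ on a set of logarithmic density one.

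Your route is genuinely different and considerably more hands-on: you propose to redo the full Poisson--Jensen decomposition, bound the kernel difference and each Blaschke term separately, invoke a Boutroux--Cartan covering to handle radii close to the moduli of zeros and poles, and then balance $R$ against $r$ through a Borel--Nevanlinna estimate. This is essentially the machinery behind the difference-operator logarithmic derivative lemma (for shifts $z\mapsto z+c$) transplanted to the $q$-setting. It can be made to work, but it is heavier than necessary here: the multiplicative rescaling $z\mapsto qz$ interacts much more cleanly with the characteristic than an additive shift does, which is precisely what allows the short inequality above and dispenses with the Boutroux--Cartan step and the delicate triple bookkeeping of exceptional sets you flag at the end. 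Your handling of $|q|=1$ as ``reducing to a rotation'' is also too quick---a rotation does not make $f(qz)/f(z)$ trivially bounded---whereas the inequality above covers all $q\neq 0$ uniformly. In short: your outline is sound in principle, but the cited proof reaches the conclusion by a single clean inequality plus a growth lemma, avoiding the Blaschke and Cartan-covering analysis entirely.
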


It is natural to consider how to extend Theorem \ref{T-3.1} to several complex variables. In \cite{cao}, the first author of this paper attempted to use the Biancofiore-Stoll method and found that there is one big technical difficulty in extending Theorem \ref{T-3.1} to the case of several complex variables. Meanwhile, Wen \cite{wenzhitao} also used the same method and stated that Theorem \ref{T-3.1} is still true for a meromorphic function on $\mathbb{C}^{m}$ and a given $q\in\mathbb{C}^{m}\setminus\{0\}.$ Unfortunately, there is a gap in \cite{wenzhitao}, which has to do with the same technical problem as the first author of this paper met in \cite{cao}. In fact, from the proof of \cite[Lemma 5]{wenzhitao} one can see that the term $(\frac{R}{r})^{2n-2}$ is missing on the right hand side of the inequality in the statement of \cite[Lemma 5]{wenzhitao},  which means that the equality (2.3) in \cite{wenzhitao} cannot be obtained by taking $R=kr$ and making use of \cite[Lemma 5 and Lemmas 6-8]{wenzhitao} in the proof of \cite[Theorem 1]{wenzhitao}. 
From the following result on the $q$-difference analogue of the logarithmic derivative lemma we conclude that \cite[Theorem~1]{wenzhitao} and its applications remain valid under the additional assumption of $q=(\tilde{q}, \ldots, \tilde{q})\in\mathbb{C}^{m}\setminus\{0\}.$\par

Here, we will adopt the method due to Stoll \cite{stoll} and Fujimoto \cite{fujimoto-7} to obtain a weak $q$-difference analogue of the logarithmic derivative lemma for meromorphic functions of several complex variables and a specially given $q=(\tilde{q}, \ldots, \tilde{q})\in\mathbb{C}^{m}\setminus\{0, 1\}.$ This result generalizes Theorem \ref{T-3.1}, but it is still open whether the result remains true for any given $q\in\mathbb{C}^{m}\setminus\{0\}.$ Since for $q=1\in\mathbb{C}^{m}$ it is already studied by \cite{korhonen-2012, cao-korhonen, cao-xu}, we here only assume $q\in\mathbb{C}^{m}\setminus\{0, 1\}.$\par

\begin{theorem}\label{q-LDL}
Let $f$ be a nonconstant zero-order meromorphic function on $\mathbb{C}^{m},$ and let $q=(\tilde{q}, \ldots, \tilde{q})\in\mathbb{C}^{m}\setminus\{0,  1\},$ $c=(c_{1}, \ldots, c_{m})\in\mathbb{C}^{m}.$ Then
\begin{eqnarray*}
m\left(r, \frac{f(q\cdot z+c)}{f(z)}\right)&=&\int_{S_{m}(r)}\log^{+}\left|\frac{f(\tilde{q}z_{1}+c_{1}, \ldots, \tilde{q}z_{m}+c_{m})}{f(z_{1}, \ldots, z_{m})}\right|\eta_{m}(z)\\&=&o\left(T_{f}(r)\right)
\end{eqnarray*} for all $r=\|z\|$ on a set of logarithmic density one.
\end{theorem}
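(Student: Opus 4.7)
The plan is to exploit the fact that the hypothesis $q=(\tilde q,\ldots,\tilde q)$ reduces the $q$-shift on $\mathbb{C}^{m}$ to scalar multiplication by $\tilde q$, which preserves every complex line through the origin. This invites a slicing reduction to the one-variable Theorem \ref{T-3.1} via the integral-geometric formulas of Stoll and Fujimoto, as hinted at in the paragraph preceding the statement.

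For each direction $a\in S_{m}(1)$ I would pass to the one-variable slice $f_{a}(\zeta):=f(\zeta a)$. Since $q(\zeta a)=\tilde q\zeta a$, the pointwise identity
\begin{equation*}
\left.\frac{f(qz)}{f(z)}\right|_{z=\zeta a}=\frac{f_{a}(\tilde q\zeta)}{f_{a}(\zeta)}
\end{equation*}
holds whenever both sides are defined. The next step is to invoke the Fujimoto--Stoll slicing formula which writes integration against $\eta_{m}$ on $S_{m}(r)$ as an iterated integral, averaging circular integrals on $|\zeta|=r$ against a probability measure $d\omega(a)$ on projective directions. Applied to $\log^{+}|f(qz)/f(z)|$, this yields
\begin{equation*}
m\!\left(r,\frac{f(qz)}{f(z)}\right)=\int_{a}m\!\left(r,\frac{f_{a}(\tilde q\zeta)}{f_{a}(\zeta)}\right)d\omega(a).
\end{equation*}

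The classical slicewise characteristic bound $\int T_{f_{a}}(r)\,d\omega(a)\leq T_{f}(r)+O(1)$ then implies that for $\omega$-almost every $a$ the slice $f_{a}$ is a non-constant meromorphic function on $\mathbb{C}$ of zero order, and in particular Theorem \ref{T-3.1} is applicable and gives
\begin{equation*}
m\!\left(r,\frac{f_{a}(\tilde q\zeta)}{f_{a}(\zeta)}\right)=o(T_{f_{a}}(r))\qquad(r\in[1,\infty)\setminus E(a)),
\end{equation*}
where $E(a)$ has logarithmic density zero.

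The main obstacle is uniformity: the exceptional set $E(a)$ depends on $a$, and a naive Fubini does not yield a single $r$-exceptional set of logarithmic density zero after integration. To overcome this I would use the quantitative form of the proof in \cite{barnett-halburd-korhonen-morgan}, which actually produces an explicit majorant for $m(r,f_{a}(\tilde q\zeta)/f_{a}(\zeta))$ of the shape $T_{f_{a}}(r)$ times a deterministic factor $\varepsilon(r)$ which tends to zero off a set of logarithmic density zero. Integrating this majorant against $\omega$ and applying a Chebyshev-type argument with respect to the logarithmic measure $dr/r$ on $[1,\infty)$ transfers the pointwise decay through the integral and produces a single $r$-exceptional set of logarithmic density zero. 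Combined with the slicewise bound on $T_{f_{a}}$, this delivers $m(r,f(qz)/f(z))=o(T_{f}(r))$ on a set of logarithmic density one, as required.
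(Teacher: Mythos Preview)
Your overall strategy---slicing along complex lines, using the diagonal form of $q$ to reduce to the scalar shift $\tilde q$ on each slice, and then invoking the quantitative one-variable estimate from \cite{barnett-halburd-korhonen-morgan}---is exactly what the paper does. The difference lies in how the uniformity issue is resolved. You propose to bound the slice proximity by something of the form $\varepsilon(r)\,T_{f_a}(r)$ with $\varepsilon(r)$ deterministic, and then push through a Chebyshev-type argument. The paper avoids this entirely: the quantitative lemma it uses (\cite[Lemma~5.1]{barnett-halburd-korhonen-morgan}) gives, for \emph{every} $r>0$ and every $M>\max\{1,|\tilde q|\}$, a bound of the form
\[
m\!\left(r,\frac{f_a(\tilde q\,\cdot)}{f_a(\cdot)}\right)\leq \frac{O(1)}{M}\Bigl(T_{f_a}(Mr)+\log^{+}\tfrac{1}{|f_a(0)|}\Bigr),
\]
with no exceptional set at all. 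Integrating this over directions and using the slicing identity $T_f(Mr)=\int T_{f_a}(Mr)\,d\omega(a)$ immediately yields the global inequality $m(r,f(qz)/f(z))\leq \tfrac{O(1)}{M}T_f(Mr)+O(1)$ for all $r$. Only at this final, global stage does the paper introduce an exceptional set, by applying Hayman's growth lemma to the single function $T_f$ (choosing $M=2^n$ so that $T_f(Mr)\leq 2T_f(r)$ on a set of logarithmic density one). This is cleaner than your route: because the slicewise bound carries a free parameter $M$ and involves $T_{f_a}(Mr)$ rather than $T_{f_a}(r)$, no slice-by-slice exceptional sets ever arise, and no Fubini/Chebyshev manoeuvre is needed.
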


\begin{proof}
Let $E_{1}$ be the set of all points $\xi\in S_{m}(1)$ such that $\{z=u\xi: |u|<+\infty\}\subset K_{f}$ which is of measure zero in $S_{m}(1).$ For any $\xi\in S_{m}(1)\setminus E_{1},$ considering the meromorphic function $f^{\xi}(u):=f(\xi u)$ of $\mathbb{C}^{1},$ we have
 \begin{eqnarray*} T_{f^{\xi}}(r)=\frac{1}{2\pi}\int_{0}^{2\pi}\log |f^{\xi}(re^{\sqrt{-1}\theta})|d\theta-\log|f(0)|,\end{eqnarray*} and thus by \cite[Lemmas 1.1--1.2]{stoll} it follows (see also \cite[pp.~33--34]{fujimoto-7})
\begin{equation}\label{Eqq-3.1}T_{f}(r)=\int_{S_{m}(1)}T_{f^{\xi}}(r)\eta_{m}(z).\end{equation}\par

Recall that the proximity function of the meromorphic function $f$ on $\mathbb{C}^{m}$ is defined by
\begin{equation*}
m(r, f)=\int_{S_{m}(r)}\log^{+}\left|f(z)\right|\eta_{m}(z).
\end{equation*}
For any $\xi\in S_{m}(1)\setminus E_{1},$ considering the meromorphic function $f^{\xi}(u):=f(\xi u)$ of $\mathbb{C}^{1},$ we have
$$m(r, f^{\xi})=\frac{1}{2\pi}\int_{0}^{2\pi}\log^{+}|f^{\xi}(re^{\sqrt{-1}\theta})|d\theta.$$ Then by \cite[Lemmas 1.1--1.2]{stoll}, we also get
\begin{eqnarray}\label{Eqq-3.2}m(r, f)=\int_{S_{m}(1)}m(r,¡¡f^{\xi})\eta_{m}(z).\end{eqnarray} \par

Since $q=(\tilde{q}, \ldots, \tilde{q})\in\mathbb{C}^{m}\setminus\{0\},$  $\tilde{q}\in\mathbb{C}^{1}\setminus\{0\}.$ For any $\xi\in S_{m}(1)\setminus E_{1},$ considering the meromorphic function $f^{\xi}(u):=f(\xi u)$ of $\mathbb{C}^{1},$ we get from \eqref{Eqq-3.2} that
\begin{eqnarray*} m\left(r, \frac{f(q\cdot z)}{f(z)}\right)&=&
  \int_{S_{m}(r)}\log^{+}\left|\frac{f(q\cdot z)}{f(z)}\right|\eta_{m}(z)
  \\&=&\int_{S_{m}(1)}\left(\frac{1}{2\pi}\int_{0}^{2\pi}\log^{+}\left|\frac{f^{\xi}(|\tilde{q}u|e^{\sqrt{-1}\theta})}{f^{\xi}(|u|e^{\sqrt{-1}\theta})}\right|d\theta\right)\eta_{m}(z),\end{eqnarray*}
where we denote $z=u\xi$ for any $\xi\in S_{m}(1).$ By \cite[Lemma 5.1]{barnett-halburd-korhonen-morgan}, we get that for all $r>0,$ $M>\max\{1, \|q\|=|\tilde{q}|\},$\begin{eqnarray*}
m\left(r,\frac{f^{\xi}(|\tilde{q}u|e^{\sqrt{-1}\theta})}{f^{\xi}(|u|e^{\sqrt{-1}\theta})}\right)
&=&\frac{1}{2\pi}\int_{0}^{2\pi}\log^{+}\left|\frac{f^{\xi}(|\tilde{q}u|e^{\sqrt{-1}\theta})}{f^{\xi}(|u|e^{\sqrt{-1}\theta})}\right|d\theta\\&\leq& \frac{O(1)}{M}\left(T_{f^{\xi}}(Mr)+\log^{+}\frac{1}{|f^{\xi}(0)|}\right),\end{eqnarray*}
where $r=|u|=\|u\xi\|=\|z\|.$ Therefore, together with \eqref{Eqq-3.1}, it follows from the two inequalities above that
\begin{eqnarray*} m\left(r, \frac{f(q\cdot z)}{f(z)}\right)&=&
  \int_{S_{m}(r)}\log^{+}\left|\frac{f(q\cdot z)}{f(z)}\right|\eta_{m}(z)
  \\&\leq&\int_{S_{m}(1)}\left(\frac{O(1)}{M}\left(T_{f^{\xi}}(Mr)+\log^{+}\frac{1}{|f^{\xi}(0)|}\right)\right)\eta_{m}(z)\\
  &=&
\frac{O(1)}{M}\int_{S_{m}(1)}T_{f^{\xi}}(Mr)\eta_{m}(z)+O(1),\end{eqnarray*}
namely, \begin{eqnarray}\label{Eqq-3.3} m\left(r, \frac{f(q\cdot z)}{f(z)}\right)\leq \frac{O(1)}{M} T_{f}(Mr)+O(1).\end{eqnarray}

The following part of the proof is dealt with similarly as in \cite{barnett-halburd-korhonen-morgan}. By choosing $M:=2^{n}$ and by applying \cite[Lemma 4]{hayman}, we get $T_{f}(Mr)\leq 2T_{f}(r)$ on a set of logarithmic density one. Hence by \cite[Lemma 5.3]{barnett-halburd-korhonen-morgan}  we get from \eqref{Eqq-3.3} that
\begin{eqnarray*}m\left(r, \frac{f(q\cdot z)}{f(z)}\right)=o\left(T_{f}(r)\right)\end{eqnarray*} for  all $r$ on a set of logarithmic density one.\par

According to \cite[Theorem 3.1]{korhonen-2012} (see also \cite{cao-xu} for an improvement), we have
\begin{eqnarray*} m\left(r, \frac{f(z+c)}{f(z)}\right)\leq o(T_{f}(r))\end{eqnarray*} for all $r$ outside of a possible exceptional set of finite logarithmic measure (which is of logarithmic density zero).

Hence, we get that \begin{eqnarray*} m\left(r, \frac{f(q\cdot z+c)}{f(z)}\right)\leq m\left(r, \frac{f(q\cdot z+c)}{f(q\cdot z)}\right)+m\left(r, \frac{f(q\cdot z)}{f(z)}\right)=o(T_{f}(r))\end{eqnarray*} for  all $r$ on a set of logarithmic density one.

\end{proof}

\begin{remark} Note that $$\frac{f(z)}{f(q\cdot z+c)}=\frac{f(\frac{1}{q_{1}}(q_{1}z_{1}+c_{1})-\frac{c_{1}}{q_{1}}, \ldots, \frac{1}{q_{m}}(q_{m}z_{m}+c_{m})-\frac{c_{m}}{q_{m}})}{f(q_{1}z_{1}+c_{1}, \ldots, q_{m}z_{m}+c_{m})}$$ and $$\frac{\overline{f}^{[k]}}{f}=\frac{\overline{f}^{[k]}}{\overline{f}^{[k-1]}}\cdot\frac{\overline{f}^{[k-1]}}{\overline{f}^{[k-2]}}\cdots\frac{\overline{f}}{f}\, \,(k\in\mathbb{N}).$$ Thus we get immediately from Theorem \ref{q-LDL} that for $q=(\tilde{q}, \ldots, \tilde{q})\in\mathbb{C}^{m}\setminus\{0, 1\},$
\begin{equation*}
\int_{S_{m}(r)}\log^{+}\left|\frac{f(z)}{f(q\cdot z+c)}\right|\eta_{m}(z)=o\left(T_{f}(r)\right)
\end{equation*}
and
\begin{equation*}
 \int_{S_{m}(r)}\log^{+}\left|\frac{\overline{f}^{[k]}}{f}\right|\eta_{m}(z)+\int_{S_{m}(r)}\log^{+}\left|\frac{f}{\overline{f}^{[k]}}\right|\eta_{m}(z)=o\left(T_{f}(r)\right)
\end{equation*}
for  all $r$ on a set of logarithmic density one.\end{remark}

\section{$q$-difference analogue of the second main theorem for hyperplanes}\label{q_diff_hyperplane_sec}
In 2007, the second main theorem for meromorphic functions of one variable was obtained by Barnett, Halburd, the second author and Morgan \cite{barnett-halburd-korhonen-morgan}.\par

\begin{theorem}[\cite{barnett-halburd-korhonen-morgan}]\label{T-4A} Let $f$ be a non-constant zero-order meromorphic function on $\mathbb{C},$ let $\Delta^{0}_{q}f:=f(qz)-f(z)\not\equiv 0$ and $q\in \mathbb{C}\setminus\{0, 1\},$ and let $a_{1},a_{2}, \ldots, a_{p}\in\mathbb{C},$ $p\geq 2,$ be distinct points. Then
\begin{equation*}m(r,f)+\sum_{k=1}^{p}m\left(r, \frac{1}{f-a_{k}}\right)\leq 2T_{f}(r)-N_{\mbox{pair}}(r,f)+o(T_{f}(r))\end{equation*} on a set with a logarithmic density one, where \begin{equation*}N_{\mbox{pair}}(r,f):=2N(r,f)-N(r,\Delta^{0}_{q}f)+N\left(r,\frac{1}{\Delta^{0}_{q}f}\right).
\end{equation*}
 \end{theorem}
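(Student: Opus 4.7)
The plan is to follow the classical Nevanlinna second main theorem argument, replacing the derivative $f'$ throughout by the $q$-difference $\Delta_q f = f(qz)-f(z)$ and using the one-variable $q$-difference logarithmic derivative lemma (Theorem \ref{T-3.1}) in place of the usual logarithmic derivative estimate. Because we are on $\mathbb{C}$, all error terms from Theorem \ref{T-3.1} can be absorbed into $o(T_f(r))$ on a set of logarithmic density one, and we use that the exceptional set is closed under finite unions.

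First I would introduce the auxiliary function
\[
F(z) \;=\; \sum_{k=1}^{p} \frac{1}{f(z)-a_{k}}.
\]
Let $\delta = \tfrac12 \min_{j\neq k}|a_{j}-a_{k}|>0$. A standard pointwise estimate shows that at each $z$ there is at most one $a_{k_{0}}$ with $|f(z)-a_{k_{0}}|<\delta$, and in that case $|f(z)-a_{k}|\geq \delta$ for $k\neq k_{0}$, which gives
\[
\sum_{k=1}^{p}\log^{+}\frac{1}{|f(z)-a_{k}|} \;\leq\; \log^{+}|F(z)| + O(1).
\]
Integrating over $S_{1}(r)$ yields $\sum_{k=1}^{p} m\bigl(r,\tfrac{1}{f-a_{k}}\bigr)\leq m(r,F)+O(1)$.

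Next I would factor
\[
F(z) \;=\; \frac{1}{\Delta_{q}f(z)}\sum_{k=1}^{p}\frac{\Delta_{q}f(z)}{f(z)-a_{k}},
\]
and note that for each $k$, writing $g_{k}(z)=f(z)-a_{k}$, one has $\Delta_{q}f/(f-a_{k}) = g_{k}(qz)/g_{k}(z)-1$. Since $T_{g_{k}}(r)=T_{f}(r)+O(1)$ and $g_{k}$ is still a nonconstant zero-order meromorphic function, Theorem \ref{T-3.1} applies to each $g_{k}$ and gives $m\bigl(r,\Delta_{q}f/(f-a_{k})\bigr)=o(T_{f}(r))$ on a set of logarithmic density one. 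Summing over the finitely many $k$ (and intersecting the exceptional sets) yields
\[
m(r,F) \;\leq\; m\!\left(r,\frac{1}{\Delta_{q}f}\right) + o(T_{f}(r)).
\]
By Jensen's formula, $m(r,1/\Delta_{q}f)=T(r,\Delta_{q}f)-N(r,1/\Delta_{q}f)+O(1)=m(r,\Delta_{q}f)+N(r,\Delta_{q}f)-N(r,1/\Delta_{q}f)+O(1)$.

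Finally I would estimate $m(r,\Delta_{q}f)$. Writing $\Delta_{q}f = f(z)\bigl(f(qz)/f(z)-1\bigr)$ and applying Theorem \ref{T-3.1} once more gives $m(r,\Delta_{q}f)\leq m(r,f)+o(T_{f}(r))$ on a set of logarithmic density one. Assembling all the inequalities and using $T_{f}(r)=m(r,f)+N(r,f)$:
\[
m(r,f)+\sum_{k=1}^{p}m\!\left(r,\frac{1}{f-a_{k}}\right) \leq 2m(r,f)+N(r,\Delta_{q}f)-N\!\left(r,\frac{1}{\Delta_{q}f}\right)+o(T_{f}(r))
\]
\[
= 2T_{f}(r) - \Bigl(2N(r,f)-N(r,\Delta_{q}f)+N\!\left(r,\tfrac{1}{\Delta_{q}f}\right)\Bigr)+o(T_{f}(r)),
\]
which is exactly the claimed inequality with the stated $N_{\mathrm{pair}}(r,f)$.

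The genuinely delicate point is the step $m(r,F)\leq m(r,1/\Delta_{q}f)+o(T_{f}(r))$, since this is where the hypothesis $\Delta_{q}f\not\equiv 0$ and the zero-order assumption come in via Theorem \ref{T-3.1}; every other step is a formal rearrangement together with the first main theorem. The fact that Theorem \ref{T-3.1} holds only outside an exceptional set of finite logarithmic measure forces the conclusion to be stated on a set of logarithmic density one, and one must be careful that the finitely many applications of Theorem \ref{T-3.1} (to $f$ and to $g_{1},\ldots,g_{p}$) together still leave a set of logarithmic density one.
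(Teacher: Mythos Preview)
The paper does not contain a proof of this statement: Theorem~\ref{T-4A} is quoted with the citation \cite{barnett-halburd-korhonen-morgan} and is used as a known result, with no proof supplied in the present paper. So there is nothing in the paper to compare your argument against.

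That said, your proof is correct and is precisely the standard argument used in the original reference: introduce the partial-fraction sum $F=\sum_k 1/(f-a_k)$, bound $\sum_k m(r,1/(f-a_k))$ by $m(r,F)+O(1)$ via the usual pointwise separation estimate, factor $F=(\Delta_q f)^{-1}\sum_k \Delta_q f/(f-a_k)$, and then apply the $q$-logarithmic derivative lemma (Theorem~\ref{T-3.1}) to each $g_k=f-a_k$ and to $f$ itself. The algebraic rearrangement at the end via $T_f(r)=m(r,f)+N(r,f)$ and the first main theorem is exactly right, and your remark about the finitely many exceptional sets from Theorem~\ref{T-3.1} still leaving a set of logarithmic density one is the correct way to handle the error term. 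One cosmetic point: in your last paragraph you speak of ``an exceptional set of finite logarithmic measure'', whereas Theorem~\ref{T-3.1} is stated in terms of logarithmic density one; the two formulations are compatible (a set of finite logarithmic measure has logarithmic density zero), but you should keep the terminology consistent with the cited lemma.
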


In \cite{cao}, the first author extended Theorem \ref{T-4A} to the case for meromorphic functions of several complex variables and a given number $q=(\tilde{q}, \ldots, \tilde{q})\in\mathbb{C}^{m}\setminus\{0\},$ by directly applying the method of Stoll and Fujimoto (the standard process of averaging over the complex lines in $\mathbb{C}^{m}$) to the second main theorem of one variable, and obtained the following result.\par

\begin{theorem}[\cite{cao}]\label{T-4B} Let $f$ be a nonconstant meromorphic function of order zero on $\mathbb{C}^{m}$ such that $\Delta_{q}f\not\equiv 0,$ where $q=(\tilde{q}, \ldots, \tilde{q})\in\mathbb{C}^{m}\setminus\{0, 1\}$ and $c=(0, \ldots, 0).$ For any $p$ distinct values $a_{1}, a_{2}, \ldots, a_{p}\in\mathbb{P}(\mathbb{C})$,
\begin{eqnarray*}(p-2)T_{f}(r)\leq \sum_{j=1}^{p}N\left(r, \frac{1}{f-a_{j}}\right)-N\left(r, \frac{1}{C(f_{0}, f_{1})}\right)+o\left(T_{f}(r)\right)
\end{eqnarray*}on a set with a logarithmic density one, where $C(f_{0}, f_{1})$ is the $q$-Casorati determinant of $f$ with a reduced representation $f=[f_{0}, f_{1}].$
 \end{theorem}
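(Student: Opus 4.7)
My plan is to reduce to the one-dimensional result (Theorem \ref{T-4A}) via averaging over complex lines through the origin in the style of Stoll and Fujimoto, controlling the error term using the several-variable $q$-difference logarithmic derivative lemma (Theorem \ref{q-LDL}) established in the previous section. For $\xi \in S_m(1)$ outside a measure-zero exceptional set, the slice $f^\xi(u) := f(\xi u)$ is a nonconstant meromorphic function on $\mathbb{C}$; since $\Delta_{\tilde{q}} f^\xi(u) = (\Delta_q f)(\xi u)$, the hypothesis $\Delta_q f \not\equiv 0$ forces $\Delta_{\tilde{q}} f^\xi \not\equiv 0$ for almost every $\xi$, and the identity \eqref{Eqq-3.1} together with Fatou-type reasoning ensures $f^\xi$ is of order zero for almost every $\xi$. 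Writing $f = [f_0, f_1]$ in reduced form, a direct computation gives $C(f_0^\xi, f_1^\xi)(u) = C(f_0, f_1)(\xi u)$.

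For such $\xi$ I would apply Theorem \ref{T-4A} to $f^\xi$ with the distinct values $a_1, \ldots, a_p$. The algebraic identity
$$\Delta_q f = -\frac{C(f_0, f_1)}{f_1(qz)\, f_1(z)},$$
combined with Jensen's formula and the $q$-shift comparison $N(r, f(qz)) = N(r, f) + o(T_f(r))$, converts the $N_{\mathrm{pair}}$ term in Theorem \ref{T-4A} into $N(r, 1/C(f_0^\xi, f_1^\xi)) + o(T_{f^\xi}(r))$. This yields a one-variable inequality
$$(p-2) T_{f^\xi}(r) \leq \sum_{j=1}^{p} N\!\left(r, \frac{1}{f^\xi - a_j}\right) - N\!\left(r, \frac{1}{C(f_0^\xi, f_1^\xi)}\right) + \mathcal{E}^\xi(r),$$
where $\mathcal{E}^\xi(r)$ is an explicit finite sum of one-variable $q$-proximity functions of the form $m(r, g^\xi(\tilde{q} u)/g^\xi(u))$ for meromorphic $g$ built algebraically from $f$ and the $a_j$.

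To conclude, integrate over $S_m(1)$ against $\eta_m$. The Stoll-Fujimoto identity $T_f(r) = \int_{S_m(1)} T_{f^\xi}(r)\, \eta_m$, together with \eqref{Eqq-3.2} and the analogous formula for the counting function, converts every term into its several-variable counterpart. The integrated error collapses into a finite sum of quantities $m(r, g(qz)/g(z))$, each of which is $o(T_f(r))$ on a set of logarithmic density one by Theorem \ref{q-LDL} applied to the zero-order meromorphic function $g$ (noting $T_{f - a_j} = T_f + O(1)$).

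The main obstacle is the final step: an opaque pointwise error $o(T_{f^\xi}(r))$ inherited from the one-variable second main theorem does not \emph{a priori} integrate to $o(T_f(r))$, since there is no uniform control on the slice characteristic in $\xi$. My plan circumvents this by expanding $\mathcal{E}^\xi(r)$ into its explicit $q$-logarithmic-derivative constituents \emph{before} averaging, so that the several-variable Theorem \ref{q-LDL} can be applied to the averaged quantities directly and supply the uniform $o(T_f(r))$ bound.
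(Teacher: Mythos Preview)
The paper does not supply its own proof of this statement; Theorem~\ref{T-4B} is quoted from \cite{cao} and accompanied only by the one-line description ``directly applying the method of Stoll and Fujimoto (the standard process of averaging over the complex lines in $\mathbb{C}^{m}$) to the second main theorem of one variable.'' Your proposal follows exactly this route, and your treatment of the main technical hazard---that a slice-wise $o(T_{f^{\xi}}(r))$ need not integrate to $o(T_{f}(r))$---is the right one: you expand the one-variable error into explicit $q$-logarithmic-derivative proximity terms \emph{before} averaging, then invoke Theorem~\ref{q-LDL} once at the several-variable level. One small caution: your side claim that $f^{\xi}$ has order zero for almost every $\xi$ by ``Fatou-type reasoning'' is not obviously true from \eqref{Eqq-3.1} alone, but as you yourself note, it becomes unnecessary once the error is made explicit, so this does not damage the argument.

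It is worth remarking that the paper also contains an \emph{independent} proof of the same conclusion, obtained by specializing the later Theorem~\ref{T-4.2} to $n=1$. That argument proceeds entirely in several variables via the $q$-Casorati machinery (Theorem~\ref{GSMT} and Corollary~\ref{HSMT}) rather than slicing to $\mathbb{C}$, and the logarithmic-derivative lemma enters only through Corollary~\ref{HSMT}. So your approach matches the method attributed to \cite{cao}, while the paper's own general framework furnishes a second, structurally different derivation.
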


It is known that holomorphic functions $g_{0}, \ldots, g_{n}$ on $\mathbb{C}^{m}$ are linearly dependent over $\mathbb{C}^{m}$ if and only if their Wronskian determinant $W(g_{0},\ldots, g_{n})$ vanishes identically \cite[Proposition~4.5]{fujimoto-3}. By the definition of the characteristic function and using a similar discussion as in \cite[Theorem 1.6,~p.~47]{goldberg-ostrovskii},  one can easily get that for any meromorphic function $h$ on $\mathbb{C}^{m},$  $q\in \mathbb{C}^{m}\setminus\{0\}$ and $c\in\mathbb{C}^{m},$
$$T_{h(q\cdot z+c)}(r)=O\left(T_{h(z)}(\|q\|r+\|c\|)\right).$$
Then considering this fact and making use of almost the same discussion as in \cite[Lemma 3.2]{halburd-korhonen-tohge-1}, we can get the following Lemma~\ref{L-4.1}~(ii). To prove (i) it is just not necessary to consider the growth of $f$ in the proof of (ii).  It is not difficult to get (iii) from (ii) for a meromorphic mapping $g=(f^{I_{1}}, \ldots, f^{I_{M}}): \mathbb{C}^{m}\rightarrow\mathbb{P}^{M-1}.$ We omit the details. \par

\begin{lemma}\label{L-4.1} (i) Let $q\in\mathbb{C}^{m}\setminus\{0, 1\}$ and $c\in\mathbb{C}^{m}.$
A meromorphic mapping $f:\mathbb{C}^{m}\rightarrow\mathbb{P}^{n}(\mathbb{C})$ with a reduced representation $[f_{0}, \ldots, f_{n}]$ satisfies $C(f_{0}, \ldots, f_{n})\not\equiv 0$ if and only if $f$ is linearly nondegenerate over the field $\mathcal{P}_{q}.$\par

(ii) Let $q\in\mathbb{C}^{m}\setminus\{0, 1\}$ and $c\in\mathbb{C}^{m}.$
If a meromorphic mapping $f:\mathbb{C}^{m}\rightarrow\mathbb{P}^{n}(\mathbb{C})$ with a reduced representation $[f_{0}, \ldots, f_{n}]$ satisfies  $\zeta(f)=0,$ then $C(f_{0}, \ldots, f_{n})\not\equiv 0$ if and only if $f$ is linearly nondegenerate over the field $\mathcal{P}_{q}^{0}(\subset\mathcal{P}_{q}).$\par

(iii) Let $q\in\mathbb{C}^{m}\setminus\{0, 1\}$ and $c\in\mathbb{C}^{m}.$ For a positive integer $M,$ set $\mathcal{J}_{\alpha}=\{(i_0, \cdots, i_n)\in \mathbb{N}_0^{n+1}: i_0+\cdots+ i_n=\alpha\},$ $I_{j}\in \mathcal{J}_{\alpha}$ for all $j\in\{1, \ldots, M\}.$ Then the meromorphic map $f=[f_0,\cdots, f_n]: \mathbb{C}^{m}\rightarrow\mathbb{P}^{n}(\mathbb{C})$ with zero order satisfies $\tilde{C}(f)=C(f^{I_{1}},\ldots, f^{I_{M}})\not\equiv 0$ if and only if the entire functions $f_0, \ldots, f_n$ are algebraically nondegenerate over the field $\mathcal{P}_{q}^{0}.$
\end{lemma}

The following result is a general version of the second main theorem for a meromorphic mapping $f$ into a complex projective space intersecting hyperplanes, with the ramification term in terms of the $q$-Casorati determinant, but not the Wronskian determinant like in \cite{ru-1}. Note that here we do not need a growth condition for $f$. \par

\begin{theorem}\label{GSMT} Let $q\in\mathbb{C}^{m}\setminus\{0, 1\},$ $c\in\mathbb{C}^{m},$ and $f=[f_{0}, \ldots, f_{n}]:\mathbb{C}^{m}\rightarrow\mathbb{P}^{n}(\mathbb{C})$ be a linearly nondegenerate meromorphic mapping over the field $\mathcal{P}_{q}.$ Let $H_{1}, \ldots, H_{p}$ (defining coefficient vectors $a_{1}, \ldots, a_{p}$ respectively) be arbitrary hyperplanes in $\mathbb{P}^{n}(\mathbb{C}).$ Let $T$ be the set of all injective maps $\mu: \{0, 1, \ldots, n\}\rightarrow \{1, \ldots, p\}$ such that $a_{\mu(0)}, \ldots, a_{\mu(n)}$ are linearly independent. Denote by $h_{\mu(l)}=<f(z), a_{\mu(l)}>$ and $g_{\mu(l)}=\frac{h_{\mu(l)}}{h_{\mu(0)}}$ for all $0\leq l\leq n.$ Then we have
\begin{eqnarray*}
&&\int_{S_{m}(r)}\max_{K}\sum_{k\in K}\lambda_{H_{k}}(f(z))\eta_{m}(z)\\
&\leq&\int_{S_{m}(r)}\max_{\mu\in T}\sum_{j=0}^{n}\log\frac{\|f(z)\|\|a_{\mu(j)}\|}{|<f(z), a_{\mu(j)}>|}\eta_{m}(z)\\
&\leq&(n+1)T_{f}(r)-N(r,\frac{1}{C(f_{0}, \ldots, f_{n})})+O(1)\\
&&+\sum_{\mu\in T}\sum_{i_{1}+\ldots+i_{n}\leq\frac{n(n+1)}{2}}
\sum_{l=1}^{n}\int_{S_{m}(r)}\log^{+}\frac{|\overline{g}_{\mu(l)}^{[i_{l}]}|}{|g_{\mu(l)}|}\eta_{m}(z)\\
&&+\sum_{\mu\in T}\sum_{l=1}^{n}\int_{S_{m}(r)}\log^{+}
\frac{|g_{\mu(l)}|}{|\overline{g}_{\mu(l)}^{[l]}|}\eta_{m}(z)+\sum_{\mu\in T}\sum_{j=0}^{n}\left(N\bigg(r, \frac{1}{\overline{h}_{\mu(j)}^{[j]}}\bigg)-N\left(r, \frac{1}{h_{\mu(j)}}\right)\right),
\end{eqnarray*}where $C(f_{0}, \ldots, f_{n})$ is the $q$-Casorati determinant of $f,$ and the maximum is taken over all subsets $K$ of $\{1, \ldots, p\}$ such that $a_{j}$ $(j\in K)$ are linearly independent.
\end{theorem}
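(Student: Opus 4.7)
The plan is to adapt Cartan's classical derivation of the second main theorem for hyperplanes to the $q$-difference setting, with the Wronskian replaced throughout by the $q$-Casorati determinant $C(f_{0},\ldots,f_{n})$. At each point $z$ outside a measure-zero set, I would first pick the injection $\mu\in T$ attaining $\max_{\mu\in T}\sum_{j=0}^{n}\log(\|f\|\|a_{\mu(j)}\|/|h_{\mu(j)}|)$; the general position assumption on $H_{1},\ldots,H_{p}$ forces this maximum to dominate $\max_{K}\sum_{k\in K}\lambda_{H_{k}}(f(z))$, which establishes the first (Cartan-type) inequality in the statement.

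The algebraic heart of the proof is a two-step factorization of $W_{\mu}:=C(h_{\mu(0)},\ldots,h_{\mu(n)})$. First, the multilinearity and skew-symmetry of the $q$-Casorati determinant, combined with the relations $h_{\mu(l)}=\sum_{i}a_{\mu(l),i}f_{i}$, yield $W_{\mu}=c_{\mu}\,C(f_{0},\ldots,f_{n})$ with $c_{\mu}=\det[a_{\mu(0)},\ldots,a_{\mu(n)}]\neq 0$. Second, using $h_{\mu(l)}=g_{\mu(l)}h_{\mu(0)}$ and the multiplicativity $\overline{h_{\mu(l)}}^{[k]}=\overline{g_{\mu(l)}}^{[k]}\overline{h_{\mu(0)}}^{[k]}$, I factor $\overline{h_{\mu(0)}}^{[k]}$ out of row $k$ and $g_{\mu(l)}$ out of column $l$ (with $g_{\mu(0)}\equiv 1$) to obtain
$$
W_{\mu}=\Bigl(\prod_{k=0}^{n}\overline{h_{\mu(0)}}^{[k]}\Bigr)\Bigl(\prod_{l=1}^{n}g_{\mu(l)}\Bigr)L_{\mu}'',\qquad L_{\mu}''=\det\Bigl(\overline{g_{\mu(l)}}^{[k]}/g_{\mu(l)}\Bigr)_{k,l=0,\ldots,n}.
$$
Rewriting $\prod_{k=0}^{n}\overline{h_{\mu(0)}}^{[k]}$ via $\overline{h_{\mu(j)}}^{[j]}=\overline{g_{\mu(j)}}^{[j]}\overline{h_{\mu(0)}}^{[j]}$, this rearranges to the key identity
$$
\prod_{j=0}^{n}\overline{h_{\mu(j)}}^{[j]}=\frac{c_{\mu}\,C(f_{0},\ldots,f_{n})\,\prod_{j=1}^{n}\bigl(\overline{g_{\mu(j)}}^{[j]}/g_{\mu(j)}\bigr)}{L_{\mu}''}.
$$

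With this identity, I start from the pointwise decomposition
$$
\sum_{j=0}^{n}\log\frac{\|f\|\|a_{\mu(j)}\|}{|h_{\mu(j)}|}=(n+1)\log\|f\|-\sum_{j=0}^{n}\log|\overline{h_{\mu(j)}}^{[j]}|+\sum_{j=0}^{n}\log\bigl|\overline{h_{\mu(j)}}^{[j]}/h_{\mu(j)}\bigr|+O(1),
$$
substitute the factorization for $\prod_{j}\overline{h_{\mu(j)}}^{[j]}$, and integrate against $\eta_{m}$ on $S_{m}(r)$. Crucially, each $h_{\mu(j)}$, each $\overline{h_{\mu(j)}}^{[j]}$, and $C(f_{0},\ldots,f_{n})$ is holomorphic (being built from the holomorphic components $f_{i}$ or their $q$-shifts), so Jensen's formula converts each $\int\log|\cdot|\eta_{m}$ of these quantities to a pure zero-counting function. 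This produces $(n+1)T_{f}(r)-N(r,1/C(f_{0},\ldots,f_{n}))$ as the main term, together with the claimed differences $N(r,1/\overline{h_{\mu(j)}}^{[j]})-N(r,1/h_{\mu(j)})$. The residual contributions $\log|g_{\mu(j)}/\overline{g_{\mu(j)}}^{[j]}|$ and $\log|L_{\mu}''|$ are dominated by their $\log^{+}$ counterparts, and the Leibniz expansion $L_{\mu}''=\sum_{\sigma}\mathrm{sgn}(\sigma)\prod_{l=1}^{n}\overline{g_{\mu(l)}}^{[\sigma(l)]}/g_{\mu(l)}$ (with $\sigma$ a permutation of $\{0,\ldots,n\}$, so $\sigma(1)+\cdots+\sigma(n)=\frac{n(n+1)}{2}-\sigma(0)\leq\frac{n(n+1)}{2}$) delivers the first error summation. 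Finally, taking $\max_{\mu\in T}$ pointwise and dominating the $\mu$-dependent contributions by the sum over all $\mu\in T$ produces the stated inequality.

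The main obstacle is the combinatorial bookkeeping of the column factorization and the Leibniz expansion: one must verify that the $\overline{g_{\mu(l)}}^{[l]}/g_{\mu(l)}$ diagonal factors extracted from $L_{\mu}$ combine cleanly with $\prod_{j=1}^{n}\overline{g_{\mu(j)}}^{[j]}$ so that precisely the two types of $\log^{+}$ error terms in the statement emerge, with no residual $h_{\mu(0)}$-shift ratios. A secondary subtlety lies in passing from the pointwise bound to the integral of $\max_{\mu}$: since the signed counting differences $N(r,1/\overline{h_{\mu(j)}}^{[j]})-N(r,1/h_{\mu(j)})$ have no definite sign, one must argue that summing over all $\mu\in T$ of the nonnegative $\log^{+}$ error terms, together with these differences, still dominates the pointwise maximum. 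Notice that the proof does not invoke the $q$-difference logarithmic derivative lemma (Theorem \ref{q-LDL}) and imposes no growth condition on $f$; the $\log^{+}$ remainders are therefore kept explicit, to be estimated as $o(T_{f}(r))$ only later under a zero-order hypothesis on $f$.
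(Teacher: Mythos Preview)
Your proposal is correct and follows essentially the same route as the paper's proof: the paper splits the integrand as $I_{1}+I_{2}$, where $I_{1}$ uses the factorization $C(h_{\mu(0)},\ldots,h_{\mu(n)})=A_{\mu}\,C(f_{0},\ldots,f_{n})$ together with Jensen's formula to produce $(n+1)T_{f}(r)-N(r,1/C(f_{0},\ldots,f_{n}))$, and $I_{2}$ uses exactly your row/column factorization through the ratios $g_{\mu(l)}=h_{\mu(l)}/h_{\mu(0)}$ and the Leibniz expansion of the resulting determinant to generate the two $\log^{+}$ error sums and the $N$-differences. Your key identity for $\prod_{j}\overline{h}_{\mu(j)}^{[j]}$ is a rearrangement of the paper's computation of $C(h_{\mu(0)},\ldots,h_{\mu(n)})/\bigl(h_{\mu(0)}\overline{h}_{\mu(1)}\cdots\overline{h}_{\mu(n)}^{[n]}\bigr)$, and the subsequent steps coincide. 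The ``secondary subtlety'' you flag---passing from the pointwise $\max_{\mu}$ to $\sum_{\mu}$ when the $N$-differences have no definite sign---is handled in the paper simply by the crude bound $\max_{\mu}\leq\sum_{\mu}$ applied separately to each piece; this is admittedly loose, but it is the same step you anticipate.
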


\begingroup
\allowdisplaybreaks

\begin{proof} The basic idea comes from \cite[Theorem 3.1.3]{ru-1}. The difference is that we consider the Casorati determinant here.
Let $H_{1}, \ldots, H_{p}$ be the given hyperplanes with coefficient vectors $a_{1}, \ldots, a_{p}$ in $\mathbb{C}^{n+1}.$ Denote by $K\subset\{1, \ldots, p\}$ such that $a_{k} (k\in K)$ are linearly independent. Without loss of generality, we may assume that $p\geq n+1$ and $\sharp K=n+1.$ Let $T$ be the set of all injective maps $\mu: \{0, 1, \ldots, n\}\rightarrow \{1, \ldots, p\}$ such that $a_{\mu(0)}, \ldots, a_{\mu(n)}$ are linearly independent. Denote the Weil function by $\lambda_{H_{j}}(f(z))=\log\frac{\|f(z)\|\|a_{j}\|}{|<f(z), a_{j}>|}$ for all $j\in\{1,\ldots, p\},$ where $<f(z), a_{j}>$ means the inner product. Then we get
\begin{eqnarray*}
&&\int_{S_{m}(r)}\max_{K}\sum_{k\in K}\lambda_{H_{k}}(f(z))\eta_{m}(z)\\
&\leq&\int_{S_{m}(r)}\max_{\mu\in T}\sum_{j=0}^{n}\log\frac{\|f(z)\|\|a_{\mu(j)}\|}{|<f(z), a_{\mu(j)}>|}\eta_{m}(z)\\
&\leq&\int_{S_{m}(r)}\log\left\{\max_{\mu\in T}\frac{\|f(z)\|^{n+1}}{|C(<f(z), a_{\mu(0)}>, \ldots, <f(z), a_{\mu(n)}>)|}\right\}\eta_{m}(z)\\&&+\int_{S_{m}(r)}\max_{\mu\in T}\left\{\log\frac{|C(<f(z), a_{\mu(0)}>, \ldots, <f(z), a_{\mu(n)}>)|}{\prod_{j=0}^{n}|<f(z), a_{\mu(j)}>|}\right\}\eta_{m}(z)+O(1)\\
&\leq&\int_{S_{m}(r)}\log\left\{\sum_{\mu\in T}\frac{\|f(z)\|^{n+1}}{|C(<f(z), a_{\mu(0)}>, \ldots, <f(z), a_{\mu(n)}>)|}\right\}\eta_{m}(z)\\&&+\int_{S_{m}(r)}\sum_{\mu\in T}\log\left\{\frac{|C(<f(z), a_{\mu(0)}>, \ldots, <f(z), a_{\mu(n)}>)|}{\prod_{j=0}^{n}|<f(z), a_{\mu(j)}>|}\right\}\eta_{m}(z)+O(1)\\
&:=&I_{1}+I_{2}+O(1),
\end{eqnarray*}where $C(<f(z), a_{\mu(0)}>, \ldots, <f(z), a_{\mu(n)}>)$ denotes the $q$-Casorati determinant of $<f(z), a_{\mu(0)}>,$ $\ldots,$ $<f(z), a_{\mu(n)}>.$\par

Set $h_{\mu(j)}:=<f(z), a_{\mu(j)}>$ and $a_{\mu(j)}=(a_{\mu(j),0}, \ldots, a_{\mu(j),n})$ for all $0\leq j\leq n.$ Then we have
\begin{eqnarray*}
&&|C(<f(z), a_{\mu(0)}>, \ldots, <f(z), a_{\mu(n)}>)|\\
&=&\left|\begin{array}{cccc}
         h_{\mu(0)} & h_{\mu(1)} & \cdots & h_{\mu(n)} \\
         \overline{h}_{\mu(0)}^{[1]} & \overline{h}_{\mu(1)}^{[1]} & \cdots & \overline{h}_{\mu(n)}^{[1]} \\
         \vdots & \vdots & \vdots & \vdots \\
         \overline{h}_{\mu(0)}^{[n]} & \overline{h}_{\mu(1)}^{[n]} & \cdots & \overline{h}_{\mu(n)}^{[n]} \\
       \end{array}
\right|\\
&=&\left|\begin{array}{cccc}
         f_{0} & f_{1} & \cdots & f_{n} \\
         \overline{f}_{0}^{[1]} & \overline{f}_{1}^{[1]} & \cdots & \overline{f}_{n}^{[1]} \\
         \vdots & \vdots & \vdots & \vdots \\
         \overline{f}_{0}^{[n]} & \overline{f}_{n} & \cdots & \overline{f}_{n}^{[n]} \\
       \end{array}
\right|\cdot  \left|\begin{array}{cccc}
         a_{\mu(0),0} & a_{\mu(1), 0} & \cdots & a_{\mu(n), 0} \\
         a_{\mu(0),1} & a_{\mu(1), 1} & \cdots & a_{\mu(n), 1} \\
         \vdots & \vdots & \vdots & \vdots \\
  a_{\mu(0), n} & a_{\mu(1), n} & \cdots & a_{\mu(n), n} \\
              \end{array}
\right|\\
&:=&|C(f_{0}, \ldots, f_{n})|\cdot A_{\mu}.
\end{eqnarray*} By Lemma \ref{L-4.1} we know that $C(f_{0}, \ldots, f_{n})\not\equiv 0$ since $f$ is linearly nondegenerate over the field $\mathcal{P}_{q}.$ Hence, by the definition of the characteristic function and the Jensen's Formula, we have \begin{eqnarray*}
I_{1}&=&\int_{S_{m}(r)}\log\left\{\sum_{\mu\in T}\frac{\|f(z)\|^{n+1}}{|C(<f(z), a_{\mu(0)}>, \ldots, <f(z), a_{\mu(n)}>)|}\right\}\eta_{m}(z)\\
&\leq&\int_{S_{m}(r)}\log\|f(z)\|^{n+1}\eta_{m}(z)-\int_{S_{m}(r)}\log |C(f_{0}, \ldots, f_{n})|\eta_{m}(z)+O(1)\\
&=&(n+1)T_{f}(r)-N(r,\frac{1}{C(f_{0}, \ldots, f_{n})})+O(1).
\end{eqnarray*}
Now denote $g_{\mu(l)}=\frac{<f(z), a_{\mu(l)}>}{<f(z), a_{\mu(0)}>}=\frac{h_{\mu(l)}}{h_{\mu(0)}}$ for $0\leq l\leq n.$ Then
\begin{eqnarray*}
&&\frac{|C(h_{\mu(0)}, h_{\mu(1)}, \ldots, h_{\mu(n)})|}{|h_{\mu(0)}\overline{h}_{\mu(1)}\cdots \overline{h}^{[n]}_{\mu(n)}|}= \left|\frac{\left|\begin{array}{cccccc}
h_{\mu(0)} & h_{\mu(1)} &\cdots & h_{\mu(n)} \\
\overline{h}_{\mu(0)} & \overline{h}_{\mu(1)}& \cdots & \overline{h}_{\mu(n)} \\
\vdots&\vdots&\vdots&\vdots\\
\overline{h}_{\mu(0)}^{[n]} & \overline{h}_{\mu(1)}^{[n]} &\cdots & \overline{h}_{\mu(n)}^{[n]}\\
\end{array}\right|}{h_{\mu(0)}\cdot \overline{h}_{\mu(1)}\cdots\overline{h}_{\mu(n)}^{[n]}}\right|\\&=& \left|\frac{\left|\begin{array}{ccccccc}
1 & g_{\mu(1)} & \cdots & g_{\mu(n)}\\
1 & \overline{g}_{\mu(1)}&\cdots & \overline{g}_{\mu(n)}\\
\vdots&\vdots&\vdots&\vdots\\
1 & \overline{g}_{\mu(1)}^{[n]}& \cdots & \overline{g}_{\mu(n)}^{[n]}\\
\end{array}\right|}{\overline{g}_{\mu(1)}\cdot\overline{g}_{\mu(2)}^{[2]}\cdots\overline{g}_{\mu(n)}^{[n]}}\right|
=\left|\frac{\left|\begin{array}{ccccccc}
1 & 1 & \cdots & 1\\
1 & \frac{\overline{g}_{\mu(1)}}{g_{\mu(1)}}&\cdots & \frac{\overline{g}_{\mu(n)}}{g_{\mu(n)}} \\
\vdots&\vdots&\vdots&\vdots\\
1 & \frac{\overline{g}_{\mu(1)}^{[n]}}{g_{\mu(n)}} &\cdots & \frac{\overline{g}_{\mu(n)}^{[n]}}{g_{\mu(n)}}\\
\end{array}\right|}{\frac{\overline{g}_{\mu(1)}}{g_{\mu(1)}}
\cdot\frac{\overline{g}_{\mu(2)}^{[2]}}{g_{\mu(2)}}
\cdots\frac{\overline{g}_{\mu(n)}^{[n]}}{g_{\mu(n)}}}\right|\\
&=&
\frac{\sum_{i_{1}+\ldots+i_{n}\leq\frac{n(n+1)}{2}}
\sum_{l=1}^{n}\frac{|\overline{g}_{\mu(l)}^{[i_{l}]}|}{|g_{\mu(l)}|}}{\frac{|\overline{g}_{\mu(1)}|}{|g_{\mu(1)}|}
\cdot\frac{|\overline{g}_{\mu(2)}^{[2]}|}{|g_{\mu(2)}|}
\cdots\frac{|\overline{g}_{\mu(n)}^{[n]}|}{|g_{\mu(n)}|}}.
\end{eqnarray*}
By the Jensen's Formula, we have
\begin{eqnarray*}&&\int_{S_{m}(r)}\log\frac{|h_{\mu(0)}\overline{h}_{\mu(1)}\cdots \overline{h}^{[n]}_{\mu(n)}|}{|h_{\mu(0)}h_{\mu(1)}\cdots h_{\mu(n)}|}\eta_{m}(z)\\
&\leq&\sum_{j=0}^{n}\int_{S_{m}(r)}\log|\overline{h}_{\mu(j)}^{[j]}|\eta_{m}(z)
-\sum_{j=0}^{n}\int_{S_{m}(r)}\log |h_{\mu(j)}|\eta_{m}(z)+O(1)\\
&\leq&\sum_{j=0}^{n}N\bigg(r, \frac{1}{\overline{h}_{\mu(j)}^{[j]}}\bigg)
-\sum_{j=0}^{n}N\left(r, \frac{1}{h_{\mu(j)}}\right)+O(1).
\end{eqnarray*}
Hence, we have
\begin{eqnarray*} I_{2}&=&\int_{S_{m}(r)}\sum_{\mu\in T}\log\left\{\frac{|C(<f(z), a_{\mu(0)}>, \ldots, <f(z), a_{\mu(n)}>)|}{\prod_{j=0}^{n}|<f(z), a_{\mu(j)}>|}\right\}\eta_{m}(z)\\
&\leq&\sum_{\mu\in T}\int_{S_{m}(r)}\log\left\{\frac{|C(h_{\mu(0)}, h_{\mu(1)}, \ldots, h_{\mu(n)})|}{|h_{\mu(0)}\overline{h}_{\mu(1)}\cdots \overline{h}^{[n]}_{\mu(n)}|}\right\}\eta_{m}(z)\\&&+\sum_{\mu\in T}\int_{S_{m}(r)}\log\frac{|h_{\mu(0)}\overline{h}_{\mu(1)}\cdots \overline{h}^{[n]}_{\mu(n)}|}{|h_{\mu(0)}h_{\mu(1)}\cdots h_{\mu(n)}|}\eta_{m}(z)\\
&\leq& \sum_{\mu\in T}\sum_{i_{1}+\ldots+i_{n}\leq\frac{n(n+1)}{2}}
\sum_{l=1}^{n}\int_{S_{m}(r)}\log^{+}\frac{|\overline{g}_{\mu(l)}^{[i_{l}]}|}{|g_{\mu(l)}|}\eta_{m}(z)+O(1)\\
&&+\sum_{\mu\in T}\sum_{l=1}^{n}\int_{S_{m}(r)}\log^{+}
\frac{|g_{\mu(l)}|}{|\overline{g}_{\mu(l)}^{[l]}|}\eta_{m}(z)+\sum_{\mu\in T}\sum_{j=0}^{n}\left(N\bigg(r, \frac{1}{\overline{h}_{\mu(j)}^{[j]}}\bigg)-N\bigg(r, \frac{1}{h_{\mu(j)}}\bigg)\right).
\end{eqnarray*}
Therefore, it follows that
\begin{eqnarray*}
&&\int_{S_{m}(r)}\max_{K}\sum_{k\in K}\lambda_{H_{k}}(f(z))\eta_{m}(z)\\
& \leq &\int_{S_{m}(r)}\max_{\mu\in T}\sum_{j=0}^{n}\log\frac{\|f(z)\|\|a_{\mu(j)}\|}{|<f(z), a_{\mu(j)}>|}\eta_{m}(z)\\
&\leq &(n+1)T_{f}(r)-N(r,\frac{1}{C(f_{0}, \ldots, f_{n})})+O(1)\\
&&+\sum_{\mu\in T}\sum_{i_{1}+\ldots+i_{n}\leq\frac{n(n+1)}{2}}
\sum_{l=1}^{n}\int_{S_{m}(r)}\log^{+}\frac{|\overline{g}_{\mu(l)}^{[i_{l}]}|}{|g_{\mu(l)}|}\eta_{m}(z)\\
&&+\sum_{\mu\in T}\sum_{l=1}^{n}\int_{S_{m}(r)}\log^{+}
\frac{|g_{\mu(l)}|}{|\overline{g}_{\mu(l)}^{[l]}|}\eta_{m}(z)+\sum_{\mu\in T}\sum_{j=0}^{n}\left(N\bigg(r, \frac{1}{\overline{h}_{\mu(j)}^{[j]}}\bigg)-N\bigg(r, \frac{1}{h_{\mu(j)}}\bigg)\right).
\end{eqnarray*}Hence, the proof is completed.
 \end{proof}
 \endgroup

For the special case when $q=(\tilde{q}, \ldots, \tilde{q})\in\mathbb{C}^{m}$  and $\zeta(f)=0,$ we get the following second main theorem.\par

\begin{corollary}\label{HSMT} Let $q=(\tilde{q}, \ldots, \tilde{q})\in\mathbb{C}^{m}\setminus\{0, 1\},$ $c=(c_{1}, \ldots, c_{m})\in\mathbb{C}^{m},$ and let $f=[f_{0}, \ldots, f_{n}]:\mathbb{C}^{m}\rightarrow\mathbb{P}^{n}(\mathbb{C})$ be a meromorphic mapping with zero order and linearly nondegenerate over the field $\mathcal{P}_{q}^{0}.$ Let $H_{1}, \ldots, H_{p}$ (defining coefficient vectors $a_{1}, \ldots, a_{p}$ respectively) be arbitrary hyperplanes  in $\mathbb{P}^{n}(\mathbb{C}).$  Let $T$ be the set of all injective maps $\mu: \{0, 1, \ldots, n\}\rightarrow \{1, \ldots, p\}$ such that $a_{\mu(0)}, \ldots, a_{\mu(n)}$ are linearly independent. Then we have
\begin{eqnarray*}
&&\int_{S_{m}(r)}\max_{K}\sum_{k\in K}\lambda_{H_{k}}(f(z))\eta_{m}(z)\\
&\leq&\int_{S_{m}(r)}\max_{\mu\in T}\sum_{j=0}^{n}\log\frac{\|f(z)\|\|a_{\mu(j)}\|}{|<f(z), a_{\mu(j)}>|}\eta_{m}(z)\\
&\leq&(n+1)T_{f}(r)-N\bigg(r,\frac{1}{C(f_{0}, \ldots, f_{n})}\bigg)+o(T_{f}(r))
\end{eqnarray*} for all $r=\|z\|$ on a set of logarithmic density one, where the maximum is taken over all subsets $K$ of $\{1, \ldots, p\}$ such that $a_{j}$ $(j\in K)$ are linearly independent.
\end{corollary}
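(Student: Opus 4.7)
The plan is to apply Theorem \ref{GSMT} to $f$ and show that every remaining error term on its right-hand side is $o(T_f(r))$ on a set of logarithmic density one, using Theorem \ref{q-LDL} and its consequences. First I verify the hypothesis: since $\zeta(f)=0$ and $f$ is linearly nondegenerate over $\mathcal{P}_q^0$, Lemma \ref{L-4.1}(ii) gives $C(f_0,\ldots,f_n)\not\equiv 0$, and then Lemma \ref{L-4.1}(i) upgrades this to linear nondegeneracy over the larger field $\mathcal{P}_q$, so Theorem \ref{GSMT} is indeed available.

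For the proximity-type error terms involving $\log^+|\overline{g}_{\mu(l)}^{[i_l]}/g_{\mu(l)}|$ and $\log^+|g_{\mu(l)}/\overline{g}_{\mu(l)}^{[l]}|$ I argue as follows. Each $h_{\mu(j)}=\langle f,a_{\mu(j)}\rangle$ is a linear combination of the holomorphic components $f_0,\ldots,f_n$, hence is itself holomorphic with $T_{h_{\mu(j)}}(r)\le T_f(r)+O(1)$ by the first main theorem; consequently each $g_{\mu(l)}=h_{\mu(l)}/h_{\mu(0)}$ is meromorphic of zero order and satisfies $T_{g_{\mu(l)}}(r)=O(T_f(r))$. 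The two $q$-difference proximity bounds stated immediately after Theorem \ref{q-LDL} then give both families of integrals $=o(T_{g_{\mu(l)}}(r))=o(T_f(r))$ on a set of logarithmic density one, uniformly over the finite index sets $\mu\in T$, $l\in\{1,\ldots,n\}$, and $(i_1,\ldots,i_n)$ with $i_1+\cdots+i_n\le n(n+1)/2$.

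The remaining counting-function differences are the subtler piece. Since each $h_{\mu(j)}$ is holomorphic, its polar divisor is empty, and Jensen's formula gives $\int_{S_m(r)}\log|h_{\mu(j)}|\,\eta_m=N(r,1/h_{\mu(j)})+O(1)$, with the analogous identity for $\overline{h}_{\mu(j)}^{[j]}$. Subtracting,
\begin{equation*}
N(r,1/\overline{h}_{\mu(j)}^{[j]})-N(r,1/h_{\mu(j)})=\int_{S_m(r)}\log\bigl|\overline{h}_{\mu(j)}^{[j]}/h_{\mu(j)}\bigr|\,\eta_m+O(1)\le m\bigl(r,\overline{h}_{\mu(j)}^{[j]}/h_{\mu(j)}\bigr)+O(1),
\end{equation*}
and the consequence of Theorem \ref{q-LDL} with $f$ replaced by the zero-order holomorphic function $h_{\mu(j)}$ yields $m(r,\overline{h}_{\mu(j)}^{[j]}/h_{\mu(j)})=o(T_f(r))$ on a set of logarithmic density one. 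Combining the three estimates on the intersection of the finitely many sets of logarithmic density one (still of logarithmic density one) produces the desired inequality. The step demanding the most care is this counting-function estimate: it is the holomorphy of each $h_{\mu(j)}$, inherited from the reduced representation $[f_0,\ldots,f_n]$, that lets Jensen's formula convert the $N$-difference cleanly into a $q$-difference proximity, at which point Theorem \ref{q-LDL} closes the argument.
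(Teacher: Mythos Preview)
Your proof is correct and follows the same overall strategy as the paper: apply Theorem~\ref{GSMT} and show that each of the three error blocks is $o(T_f(r))$ using the $q$-difference logarithmic derivative lemma. Your verification of the hypothesis of Theorem~\ref{GSMT} via Lemma~\ref{L-4.1}(ii) followed by Lemma~\ref{L-4.1}(i), and your treatment of the two proximity-type sums, match the paper exactly.

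The one substantive difference is in how you handle the counting-function differences $N(r,1/\overline{h}_{\mu(j)}^{[j]})-N(r,1/h_{\mu(j)})$. The paper first bounds $N(r,1/h_{\mu(j)})\le T_f(r)+o(T_f(r))$ to see that $h_{\mu(j)}$ has zero-order zero divisor, and then invokes the forward-referenced Lemma~\ref{L-8.2} (the relation $N(r,f(qz))=(1+o(1))N(r,f(z))$ for zero-order $f$) to conclude. You instead exploit the holomorphy of $h_{\mu(j)}$ to write the $N$-difference, via Jensen's formula, as $\int_{S_m(r)}\log|\overline{h}_{\mu(j)}^{[j]}/h_{\mu(j)}|\,\eta_m+O(1)\le m(r,\overline{h}_{\mu(j)}^{[j]}/h_{\mu(j)})+O(1)$, and then apply the iterated form of Theorem~\ref{q-LDL} directly to the zero-order entire function $h_{\mu(j)}$. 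Your route is a bit more self-contained (no appeal to Lemma~\ref{L-8.2}) and uses only tools already established in Section~\ref{qDifflemma_sec}; the paper's route, on the other hand, isolates the $N$-shift estimate as a stand-alone lemma that is reused later in Section~\ref{TCsec}.
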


\begin{proof} Denote by $g_{\mu(l)}=\frac{<f(z), a_{\mu(l)}>}{<f(z), a_{\mu(0)}>}$ for $0\leq l\leq n.$  Then by the definition of the characteristic function, we have
$$T_{g_{\mu(l)}}(r)\leq T_{f}(r).$$
Note that the order of $f$ is zero, and thus the order of the meromorphic functions $g_{\mu(l)}$ $(0\leq l\leq n)$ on $\mathbb{C}^{m}$ are all zero. Whenever $q=(\tilde{q}, \ldots, \tilde{q})\in\mathbb{C}^{m}\setminus\{0, 1\},$  by Theorem \ref{q-LDL} we get that
\begin{eqnarray*} \int_{S_{m}(r)}
\log^{+}\left|\frac{\overline{g}_{\mu(l)}^{[i_{l}]}}{g_{\mu(l)}}\right|\eta_{m}(z)+\int_{S_{m}(r)}
\log^{+}\left|\frac{g_{\mu(l)}}{\overline{g}_{\mu(l)}^{[l]}}\right|\eta_{m}(z)
=o(\max_{0\leq l\leq n}\{T_{g_{\mu(l)}}(r)\})=o(T_{f}(r))\end{eqnarray*}
for all $r=\|z\|$ on a set of logarithmic density one.\par

Furthermore, by the Jensen's Formula and the definition of characteristic function, we get that for any $\mu\in T$ and  $j\in \{0, 1, \ldots, n\},$
\begin{eqnarray*}
  N\left(r, \frac{1}{h_{\mu(j)}}\right)&=&\int_{S_{m}(r)}\log|h_{\mu(j)}|\eta_{m}(z)+O(1)\\
  &\leq&\int_{S_{m}(r)}\log\max\{|f_{0}(z)|, \ldots, |f_{n}(z)|\}\eta_{m}(z)+O(1)\\
 &=&T_{f}(r)+o(T_{f}(r)),
\end{eqnarray*} and thus
 \begin{eqnarray*}\lambda_{j}:=\limsup_{r\rightarrow\infty} \frac{\log^{+} N(r, \frac{1}{h_{\mu(j)}})}{\log r}\leq \zeta(f)=0.\end{eqnarray*} Hence by Lemma \ref{L-8.2} (see below in Section~\ref{TCsec}), we get that for  all $\mu \in T$ and $j\in\{0, 1, \ldots, n\},$
\begin{eqnarray*}
N\bigg(r, \frac{1}{\overline{h}_{\mu(j)}^{[j]}}\bigg)&\leq& (1+o(1))N\left(r, \frac{1}{h_{\mu(j)}}\right)\\
 &\leq& N\left(r, \frac{1}{h_{\mu(j)}}\right)+o(T_{f}(r)).
 \end{eqnarray*}
Therefore, the theorem immediately follows from Theorem \ref{GSMT}. \end{proof}

By a careful analysis of the case where the hyperplanes are in general position and the map $f$ is linearly nondegenerate,  we have the $q$-difference analogue of the Cartan's second main theorem with hyperplanes from Corollary~\ref{HSMT}.\par

\begin{theorem}\label{T-4.2} Let $q=(\tilde{q}, \ldots, \tilde{q})\in\mathbb{C}^{m}\setminus\{0, 1\},$  $c\in\mathbb{C}^{m},$ and let $f=[f_{0}, \ldots, f_{n}]:\mathbb{C}^{m}\rightarrow\mathbb{P}^{n}(\mathbb{C})$ be a meromorphic map with zero order and linearly nondegenerate over the field $\mathcal{P}_{q}^{0}.$ Let $H_{j}$ $(1\leq j\leq p)$ (defining polynomials $P_{1}, \ldots, P_{p}$ of degree one, respectively) be hyperplanes located in general position in $\mathbb{P}^{n}(\mathbb{C}).$  Then
\begin{eqnarray*}&&(p-n-1)T_{f}(r)\leq \sum_{j=1}^{p}N\left(r, \frac{1}{P_{j}\circ f(z)}\right)-N\left(r,\frac{1}{C(f_{0}, \ldots, f_{n})}\right)+o(T_{f}(r))
\end{eqnarray*} for all $r=\|z\|$ on a set of logarithmic density one, where $C(f_{0}, \ldots, f_{n})$ is the $q$-Casorati determinant of $f=(f_{0}, \ldots, f_{n}).$
\end{theorem}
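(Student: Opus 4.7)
The plan is to deduce Theorem~\ref{T-4.2} from Corollary~\ref{HSMT} by (i) comparing the sum of Weil functions over all $p$ hyperplanes with the pointwise maximum taken over injective indexings $\mu\in T$, and then (ii) converting proximity functions into counting functions via the first main theorem.

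For the first step, fix $z\in\mathbb{C}^m$ and set $\beta_j(z):=|\langle f(z),a_j\rangle|/(\|f(z)\|\|a_j\|)$. Reorder the indices so that $\beta_{\sigma(1)}(z)\leq\beta_{\sigma(2)}(z)\leq\cdots\leq\beta_{\sigma(p)}(z)$ and define $\mu^{\star}\in T$ by $\mu^{\star}(l-1):=\sigma(l)$ for $l=1,\dots,n+1$; this is legitimate because the hyperplanes are in general position, so any $n+1$ of the vectors $a_j$ are linearly independent. The key elementary observation is that for any collection of $n+1$ linearly independent unit vectors in $\mathbb{C}^{n+1}$, there is a constant $C>0$ (depending only on the fixed configuration $\{a_1,\dots,a_p\}$) such that $\max_{1\leq l\leq n+1}\beta_{\sigma(l)}(z)\geq C$, hence $\beta_{\sigma(n+1)}(z)\geq C$ uniformly in $z$. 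Consequently, for every $j\notin\{\sigma(1),\dots,\sigma(n+1)\}$ we have $\log(1/\beta_j(z))\leq\log(1/C)=O(1)$, and therefore
\begin{equation*}
\sum_{j=1}^{p}\lambda_{H_j}(f(z))\leq \max_{\mu\in T}\sum_{l=0}^{n}\lambda_{H_{\mu(l)}}(f(z))+O(1).
\end{equation*}

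For the second step, integrate the above inequality over $S_m(r)$ with respect to $\eta_m$; the left side becomes $\sum_{j=1}^{p}m_f(r,H_j)$. Combining with Corollary~\ref{HSMT} gives
\begin{equation*}
\sum_{j=1}^{p}m_f(r,H_j)\leq (n+1)T_f(r)-N\!\left(r,\tfrac{1}{C(f_0,\dots,f_n)}\right)+o(T_f(r))
\end{equation*}
on a set of logarithmic density one. Apply the first main theorem $m_f(r,H_j)+N(r,1/(P_j\circ f))=T_f(r)+O(1)$ for each $j$, sum, and rearrange to obtain exactly
\begin{equation*}
(p-n-1)T_f(r)\leq\sum_{j=1}^{p}N\!\left(r,\tfrac{1}{P_j\circ f(z)}\right)-N\!\left(r,\tfrac{1}{C(f_0,\dots,f_n)}\right)+o(T_f(r)),
\end{equation*}
which is the desired conclusion.

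The only nontrivial step is the pointwise comparison of $\sum_{j=1}^{p}\lambda_{H_j}(f(z))$ with $\max_{\mu\in T}\sum_{l=0}^{n}\lambda_{H_{\mu(l)}}(f(z))$; once the general position hypothesis has been used to pick out the $n+1$ indices with the smallest $\beta_j(z)$ and to bound $\beta_{\sigma(n+1)}(z)$ from below uniformly, the remaining deduction is bookkeeping. No growth hypothesis on $f$ is needed beyond the zero-order assumption already in Corollary~\ref{HSMT}, and the hypothesis that $f$ is linearly nondegenerate over $\mathcal{P}_q^{0}$ guarantees $C(f_0,\dots,f_n)\not\equiv 0$ via Lemma~\ref{L-4.1}(ii), so that the counting function $N(r,1/C(f_0,\dots,f_n))$ is well defined.
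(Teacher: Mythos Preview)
Your proof is correct and follows essentially the same route as the paper. The paper also orders the values $|\langle f(z),a_j\rangle|/\|a_j\|$ for fixed $z$, uses general position (via inverting the linear system) to get the uniform bound $\|f(z)\|\leq A\max_{0\leq j\leq n}|\langle f(z),a_{\mu(j)}\rangle|/\|a_{\mu(j)}\|$---equivalent to your $\beta_{\sigma(n+1)}(z)\geq C$---and then applies Corollary~\ref{HSMT} followed by the first main theorem.
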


\begin{proof}
Let $a_{j}$ be the coefficient vectors of the hyperplane $H_{j},$ $1\leq j\leq p.$ By the assumption that $H_{1}, \ldots, H_{p}$ are located in general position in $\mathbb{P}^{n}(\mathbb{C}),$ we can solve the linear equations system
\begin{eqnarray*}
                   <f, a_{\mu(0)}> &=& a_{\mu(0), 0}f_{0}+\ldots+ a_{\mu(0), n}f_{n}, \\
                  <f, a_{\mu(1)}> &=& a_{\mu(1), 0}f_{0}+\ldots+ a_{\mu(1), n}f_{n}, \\
                    \vdots  \\
                  <f, a_{\mu(n)}> &=& a_{\mu(n), 0}f_{0}+\ldots+ a_{\mu(n), n}f_{n}, \\
                 \end{eqnarray*}
and get that
\begin{eqnarray*}
f_{j}=\tilde{a}_{\mu(j), 0} <f, a_{\mu(0)}>+\ldots+\tilde{a}_{\mu(j), n} <f, a_{\mu(n)}>, \quad 0\leq j\leq n,
\end{eqnarray*} where $(\tilde{a}_{\mu(i), j})_{(n+1)\times(n+1)}$ is the inverse matrix of $(a_{\mu(i), j})_{(n+1)\times(n+1)}.$ Hence for any $\mu\in T\subset\{1, \ldots, p\},$ $\sharp T=n+1,$ there exists a positive number $A$ such that
$$\|f(z)\|\leq A\max_{0\leq j\leq n}\left\{\frac{|<f(z), a_{\mu(j)}>|}{\|a_{\mu(j)}\|}\right\}.$$
It is clear that for any given $z\in\mathbb{C}^{m},$ there always exists a $\mu\in  T\subset\{1, \ldots, p\},$ $\sharp T=n+1,$ such that
$$0\leq\frac{|<f(z), a_{\mu(0)}>|}{\|a_{\mu(0)}\|}\leq \ldots\leq \frac{|<f(z), a_{\mu(n)}>|}{\|a_{\mu(n)}\|}\leq \frac{|<f(z), a_{j}>|}{\|a_{j}\|},$$ for $j\neq \mu(i),$ $i=0,1,\ldots,n.$ Hence we obtain
$$\prod_{j=1}^{p}\frac{\|f(z)\|\|a_{j}\|}{|<f(z), a_{j}>|}\leq A^{p-n-1}\max_{\mu\in T}\prod_{i=0}^{n}\left\{\frac{\|f(z)\|\|a_{\mu(i)}\|}{|<f(z), a_{\mu(i)}>|}\right\}.$$
Combining this with Corollary \ref{HSMT} gives \begin{eqnarray*}
\sum_{j=1}^{p}m_{f}(r, H_{j})&=&\int_{S_{m}(r)}\log\prod_{j=1}^{p}\frac{\|f(z)\|\|a_{j}\|}{|<f(z), a_{j}>|}\eta_{m}(z)\\
&\leq&\int_{S_{m}(r)}\max_{\mu\in T}\sum_{j=0}^{n}\log\frac{\|f(z)\|\|a_{\mu(j)}\|}{|<f(z), a_{\mu(j)}>|}\eta_{m}(z)+O(1)\\
&\leq&(n+1)T_{f}(r)-N\bigg(r,\frac{1}{C(f_{0}, \ldots, f_{n})}\bigg)+o(T_{f}(r))
\end{eqnarray*} for all $r=\|z\|$ on a set of logarithmic density one. And then by the first main theorem, the theorem is immediately obtained.
\end{proof}

\begin{remark}Set
$ L=\frac{\prod_{j=1}^p H_j(f)}{C(f)}.$ Clearly, Both $\prod_{j=1}^p H_j(f)$ and $C(f)$ are entire functions on $\mathbb{C}^{m}$. By the Jensen's Formula, we have
\begin{eqnarray*}&&N\left(r,\frac{1}{L}\right)-N(r,L)\\
&=&\int_{S_{m}(r)}\log|L(z)|\eta_{m}(z)+O(1)\\
&=&\sum_{j=1}^{p}\int_{S_{m}(r)}\log|H_j(f)(z)|\eta_{m}(z)-\int_{S_{m}(r)}\log|C(f)(z)|\eta_{m}(z)+O(1)\\
&=&\sum_{j=1}^{p} N\left(r,\frac{1}{H_j(f)}\right)-N\left(r,\frac{1}{C(f)}\right)+O(1).
\end{eqnarray*}Hence the conclusion of Theorem \ref{HSMT} can be written as
\begin{eqnarray*}
(p-(n+1))T_f(r)
\leq N\bigg(r,\frac{1}{L}\bigg)-N(r,L)+o(T_f(r)),
\end{eqnarray*} which is a $q$-difference counterpart of the Gundersen-Hayman version \cite{gundersen-hayman} of the Cartan's the second main theorem with hyperplanes in several complex variables, and may also be applied into complex functional equations.
\end{remark}

By adopting the method of Ru \cite{ru-2004} (see also \cite{corvaja-zannier, an-phuong}) and using Theorem~\ref{q-LDL}  and Theorem \ref{HSMT}, we also get the second main theorem for hypersurfaces in general position, in which if the hypersurfaces reduce to hyperplanes and the map reduces to a linearly nondegenerated one, then by taking $\alpha=d=d_{j}=1$ the result, stated as follows, implies Theorem \ref{T-4.2}. We omit the details of the proof. \par

\begin{theorem}\label{SMT} Let $q=(\tilde{q}, \ldots, \tilde{q})\in\mathbb{C}^{m}\setminus\{0, 1\},$ $c\in\mathbb{C}^{m},$ and let $f=[f_{0}, \ldots, f_{n}]:\mathbb{C}^{m}\rightarrow\mathbb{P}^{n}(\mathbb{C})$ be a meromorphic map with zero order and algebraically nondegenerate over the field $\mathcal{P}_{q}^{0}.$ Let $Q_{j}$ $(1\leq j\leq p)$ (defining homogeneous polynomials $D_{j}$ of degree $d_{j}$) be hypersurfaces of degree $d_{j} (1\leq j\leq p)$ located in general position in $\mathbb{P}^{n}(\mathbb{C}).$ Let $d$ be the least common multiple of the $d_{j}.$ Then there exists a large positive integer $\alpha$ which is divisible by $d,$ such that
\begin{eqnarray*}&&(p-n-1)T_{f}(r)\\&\leq& \sum_{j=1}^{p}\frac{1}{d_{j}}N\left(r, \frac{1}{D_{j}\circ f(z)}\right)-\frac{1}{\frac{\alpha^{n+1}}{(n+1)!}+O(\alpha^{n})}N\left(r,\frac{1}{C(f^{I_{1}}, \ldots, f^{I_{M}})}\right)+o(T_{f}(r))
\end{eqnarray*} for all $r=\|z\|$ on a set of logarithmic density one, where $I_{j}=(i_{j0}, \ldots, i_{jn})$, $\sharp I_{j}=i_{j0}+\ldots+i_{jn}=\alpha$, and $M=\left(\begin{array}{c}
                                                          \alpha+n \\
                                                          n
                                                        \end{array}
\right).$
\end{theorem}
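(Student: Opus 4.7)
The plan is to adapt the Corvaja--Zannier--Ru filtration strategy for hypersurfaces, replacing Cartan's classical second main theorem at the crucial final step by the $q$-difference hyperplane second main theorem of Corollary~\ref{HSMT}. First I would perform the usual degree equalization by replacing each $D_j$ with $D_j^{d/d_j}$, so that the hypersurfaces share the common degree $d$; this is what produces the factors $1/d_j$ multiplying $N(r,1/(D_j\circ f))$ in the conclusion. Since the $Q_j$ are in general position, any $n$ of them cut out a zero-dimensional subvariety of $\mathbb{P}^n(\mathbb{C})$, so the Corvaja--Zannier filtration of Lemmas~\ref{L-5.1}--\ref{L-5.3} is applicable to every $n$-element subfamily.

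Next I would fix a large positive integer $\alpha$ divisible by $d$, set $M=\binom{\alpha+n}{n}=\dim V_\alpha$, enumerate the $(n+1)$-tuples of total degree $\alpha$ as $I_1,\dots,I_M$, and form the auxiliary meromorphic mapping $F:=[f^{I_1},\dots,f^{I_M}]:\mathbb{C}^m\to\mathbb{P}^{M-1}(\mathbb{C})$. Because $f$ is algebraically nondegenerate over $\mathcal{P}_{q}^{0}$, Lemma~\ref{L-4.2} yields $C(f^{I_1},\dots,f^{I_M})\not\equiv 0$, so that $F$ is linearly nondegenerate over $\mathcal{P}_{q}^{0}$; elementarily one has $T_F(r)\le\alpha T_f(r)+O(1)$. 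For each point $z$ I would select an $n$-element subset $J(z)\subset\{1,\dots,p\}$ achieving the $n$ smallest values of $|D_j^{d/d_j}(f(z))|/\|f(z)\|^d$, and then invoke the filtration with $\gamma_k=D_{j_k}^{d/d_{j_k}}$. This produces a basis of $V_\alpha$ whose members are classes of the form $\gamma_1^{i_1}\cdots\gamma_n^{i_n}\rho$ with $\rho$ a monomial of degree $\alpha-d\sigma(\mathbf{i})$; each such element is a $\mathbb{C}$-linear combination of $x^{I_1},\dots,x^{I_M}$, and therefore corresponds to a hyperplane in $\mathbb{P}^{M-1}(\mathbb{C})$.

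Feeding the union of these hyperplanes (taken over all admissible $J$) into Corollary~\ref{HSMT} applied to $F$ puts on the right-hand side the quantities $M\,T_F(r)-N(r,1/C(F))+o(T_F(r))$, with $M\cdot\alpha/(\tfrac{\alpha^{n+1}}{(n+1)!})=n+1+o(1)$ as $\alpha\to\infty$. The Corvaja--Zannier weight bookkeeping, combined with the fact from Lemma~\ref{L-5.3} that $\dim(W_{(\mathbf{i})}/W_{(\mathbf{i}')})=d^n$ for all but $O(\alpha^n)$ multi-indices, gives
\begin{equation*}
\sum_{i=1}^{M}\log\frac{\|F(z)\|}{|\phi_i(f(z))|}\ge\Big(\tfrac{\alpha^{n+1}}{(n+1)!}+O(\alpha^n)\Big)\sum_{j\in J(z)}\log\frac{\|f(z)\|^{d_j}}{|D_j(f(z))|}+O(\alpha^{n+1}).
\end{equation*}
Arranging the basis so that the $q$-Casorati determinant of $F$ equals $C(f^{I_1},\dots,f^{I_M})$ up to a nonzero constant, integrating over $S_m(r)$, and using the first main theorem to convert each $m_f(r,Q_j)$ into $d_j T_f(r)-N(r,1/D_j(f))+O(1)$ should yield the asserted inequality with the coefficient $(p-n-1)$ in front of $T_f(r)$.

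The principal obstacle I anticipate is the pointwise dependence of the filtration basis on $z$: a single hyperplane system on $\mathbb{P}^{M-1}(\mathbb{C})$ must be exhibited so that Corollary~\ref{HSMT}, which concerns a fixed configuration, dominates the maximum over the admissible $J$'s appearing under the integral $\int_{S_m(r)}\cdots\eta_m(z)$. One must also verify that the $q$-Casorati determinant of $F$ with respect to each filtration basis differs from the standard $C(f^{I_1},\dots,f^{I_M})$ only by a nonzero constant, and track the $O(\alpha^n)$ errors in the weight count carefully enough that the final coefficient in front of $N(r,1/C(f^{I_1},\dots,f^{I_M}))$ is the precise $1/(\tfrac{\alpha^{n+1}}{(n+1)!}+O(\alpha^n))$ claimed, without damaging the $o(T_f(r))$ error term on a set of logarithmic density one.
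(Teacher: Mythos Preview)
Your proposal is correct and follows essentially the same route as the paper: degree equalization via $D_j^{d/d_j}$, the auxiliary Veronese-type map $F=[f^{I_1},\dots,f^{I_M}]$, the Corvaja--Zannier filtration of $V_\alpha$ applied to each $n$-subset of the $D_j$, and then Corollary~\ref{HSMT} applied to $F$ with the resulting linear forms. The obstacle you flag---the $z$-dependence of the filtration basis---is resolved in the paper exactly as you would expect: since there are only finitely many $n$-element subsets $\{\gamma_1,\dots,\gamma_n\}\subset\{D_1,\dots,D_p\}$, the totality of linear forms arising from all the filtration bases is a finite family $L_1,\dots,L_u$ in general position in $\mathbb{P}^{M-1}(\mathbb{C})$, and the $\max_K$ formulation in Corollary~\ref{HSMT} handles the pointwise maximum under the integral; the paper also invokes Hilbert's Nullstellensatz to justify that dropping all but the $n$ smallest $|D_{i_k}\circ f(z)|$ costs only $O(1)$, a step you left implicit.
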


\section{Difference analogues of generalized Picard-type theorems}\label{picard_sec}

Fujimoto \cite{fujimoto-5} and Green \cite{green-1} gave a natural generalization of the Picard's theorem by showing that if $f: \mathbb{C}\rightarrow\mathbb{P}^{n}(\mathbb{C})$ omits $n+p$ hyperplanes in general position where $p\in\{1, \ldots, n+1\},$ then the image of $f$ is contained in a linear subspace of dimension at most $[\frac{n}{p}].$ In 2014, Halburd, the second author and Tohge \cite[Theorem 6.1]{halburd-korhonen-tohge-1} proposed a $q$-difference analogue of the general Picard-type theorem for holomorphic curves with zero order.\par

\begin{theorem}[\cite{halburd-korhonen-tohge-1}] \label{T-7.1} Let $f: \mathbb{C}\rightarrow\mathbb{P}^{n}(\mathbb{C})$ be a holomorphic curve with zero order. Let $q\in \mathbb{C}\setminus\{0, 1\},$ and let $p\in \{1, \ldots, n+1\}.$ If $p+n$ hyperplanes in general position in $\mathbb{P}^{n}(\mathbb{C})$ have forward invariant preimages under $f$ with respect to the rescaling $\tau(z)=qz,$ then the image of $f$ is contained in a projective linear subspace over $\mathcal{P}_{q}^{0}$ of dimension $\leq [\frac{n}{p}].$
\end{theorem}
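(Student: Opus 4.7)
The plan is a proof by contradiction, adapting the Fujimoto--Green strategy for the generalized Picard theorem to the $q$-difference setting, with Theorem~\ref{T-4.2} playing the role of the classical Cartan SMT and the forward invariance hypothesis substituting for the omission condition. Suppose, for contradiction, that the image of $f$ is not contained in any projective linear subspace over $\mathcal{P}_{q}^{0}$ of dimension at most $[n/p]$, and let $L$ be the minimal such subspace containing $f(\mathbb{C})$, of dimension $k>[n/p]$, so $kp\ge n+1$. After a change of homogeneous coordinates over $\mathcal{P}_{q}^{0}$, view $f=[f_{0},\ldots,f_{k}]\colon\mathbb{C}\to\mathbb{P}^{k}(\mathbb{C})$ as a zero-order holomorphic curve linearly nondegenerate over $\mathcal{P}_{q}^{0}$. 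The restrictions $\tilde H_{j}=H_{j}\cap L$ are $n+p$ hyperplanes in $L\cong\mathbb{P}^{k}(\mathbb{C})$ inheriting forward invariance, none containing $f(\mathbb{C})$ by minimality of $L$; general position of the $H_{j}$ in $\mathbb{P}^{n}(\mathbb{C})$ then forces $\{\tilde H_{j}\}$ to be in $n$-subgeneral position in $L$. Applying a Nochka-weighted version of Theorem~\ref{T-4.2} yields weights $\omega_{j}\in(0,1]$ and the inequality
\begin{equation*}
\Bigl(\sum_{j=1}^{n+p}\omega_{j}-(k+1)\Bigr)T_{f}(r)\le\sum_{j=1}^{n+p}\omega_{j}\bar N(r,1/\tilde H_{j}(f))-N\bigl(r,1/C(f_{0},\ldots,f_{k})\bigr)+o(T_{f}(r))
\end{equation*}
on a set of logarithmic density one, where $C(f_{0},\ldots,f_{k})$ is the $q$-Casorati determinant of the reduced representation.

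The crucial step is an orbit-counting lemma turning forward invariance into a lower bound for $N(r,1/C(f_{0},\ldots,f_{k}))$. If $\tilde H_{j}(f)(z_{0})=0$, forward invariance places $f(z_{0}),f(qz_{0}),\ldots,f(q^{k}z_{0})$ all in $\tilde H_{j}$, so these $k+1$ vectors lie in a common $k$-dimensional linear subspace of $\mathbb{C}^{k+1}$, are linearly dependent, and $C(f_{0},\ldots,f_{k})(z_{0})=0$. The same argument applied at each subsequent orbit point $q^{l}z_{0}$ (whose Casoratian matrix has columns $f(q^{l}z_{0}),\ldots,f(q^{l+k}z_{0})$, again all in $\tilde H_{j}$) gives $C(f_{0},\ldots,f_{k})(q^{l}z_{0})=0$ for every $l\ge 0$. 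Combined with the zero-order estimate $T_{f}(|q|^{k}r)=(1+o(1))T_{f}(r)$ on a set of logarithmic density one (the mechanism already behind Theorem~\ref{q-LDL}, via \cite[Lemma 4]{hayman}), a careful bookkeeping of orbit multiplicities and overlaps between the various $f^{-1}(\tilde H_{j})$ should produce
\begin{equation*}
N\bigl(r,1/C(f_{0},\ldots,f_{k})\bigr)\ge\sum_{j=1}^{n+p}\omega_{j}\bar N(r,1/\tilde H_{j}(f))+o(T_{f}(r)),
\end{equation*}
which absorbs the counting sum in the Cartan-Nochka inequality.

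Substituting back yields $(\sum_{j}\omega_{j}-(k+1))T_{f}(r)\le o(T_{f}(r))$ on a set of logarithmic density one; when the Nochka coefficient is strictly positive, this forces $T_{f}(r)=o(T_{f}(r))$, hence $f$ constant, contradicting linear nondegeneracy (since $k\ge 1$). I expect the principal obstacles to be (i) the multiplicity analysis in the orbit-counting step (tracking the vanishing order of $C(f_{0},\ldots,f_{k})$ at orbit points where several $\tilde H_{j}(f)$ share a zero, so as not to double-count), (ii) the role of $|q|\ne 1$ in ensuring each $q$-orbit meets any finite disc in only finitely many points (which is exactly where the zero-order hypothesis becomes essential), and (iii) in the regime where the bare Nochka coefficient is non-positive, upgrading the orbit-counting bound to the sharper estimate $N(r,1/C(f_{0},\ldots,f_{k}))\ge(k+1)\sum_{j}\omega_{j}\bar N(r,1/\tilde H_{j}(f))+o(T_{f}(r))$ --- counting one distinct zero of $C(f_{0},\ldots,f_{k})$ for each of $z_{0},qz_{0},\ldots,q^{k}z_{0}$ --- and then using the first main theorem $\bar N(r,1/\tilde H_{j}(f))\le T_{f}(r)+O(1)$ to close the inequality $kp\,T_{f}(r)\le n\,T_{f}(r)+o(T_{f}(r))$ against $kp\ge n+1$.
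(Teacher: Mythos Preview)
First, note that Theorem~\ref{T-7.1} is quoted from \cite{halburd-korhonen-tohge-1} and not proved in the present paper; the paper instead proves the several-variables generalization, Theorem~\ref{T-7.2}, whose argument specializes to the one-variable case. The comparison below is therefore with that proof.

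Your approach is genuinely different from the paper's. The paper (following Green~\cite{green-1}) does not pass to a minimal subspace and never invokes any Nochka-weighted second main theorem. Instead, it partitions the index set $\{1,\ldots,n+p\}$ into equivalence classes $S_1,\ldots,S_l$ under the relation $g_i/g_j\in\mathcal{P}_q^0$ (where $g_j=H_j(f)$), and treats two cases. If some $T\setminus S_k$ has at least $n+1$ elements, an application of a $q$-difference Borel lemma (Lemma~\ref{L-7.2}, proved via Lemma~\ref{L-7.1} and the $q$-Cartan SMT of Theorem~\ref{T-4.2} applied only to hyperplanes in \emph{general} position) forces $H_{s_0}(f)\equiv 0$, contradicting the standing hypothesis that $f$ intersects each $H_j$ properly. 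Otherwise every class has at least $p$ elements, so $l\le(n+p)/p$, and counting the $\sum_k(\sharp V_k-1)=n+1-l$ independent $\mathcal{P}_q^0$-linear relations among any $n+1$ of the $g_j$ directly yields the dimension bound $\le[n/p]$. The forward-invariance $\Rightarrow$ Casoratian-zero mechanism you describe is exactly what drives Lemma~\ref{L-7.1}, but there it is applied only to the $n+1$ coordinate-plus-diagonal hyperplanes in $\mathbb{P}^{n-1}$, where no subgeneral-position or weight issues arise.

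Your route, by contrast, has real gaps. First, you invoke a ``Nochka-weighted version of Theorem~\ref{T-4.2}'' for hyperplanes in $n$-subgeneral position; no such $q$-difference Nochka SMT is established in this paper (Theorem~\ref{T-4.2} is only for general position), and it is not a routine corollary. Second, after your change of homogeneous coordinates over $\mathcal{P}_q^0$, the restricted hyperplanes $\tilde H_j$ acquire coefficients in $\mathcal{P}_q^0$ rather than in $\mathbb{C}$, so you would in fact need a moving-target version of the SMT as well---a further unproved ingredient. Third, your orbit-counting inequality is only sketched: the multiplicity bookkeeping when several $\tilde H_j(f)$ share a zero, and especially the compatibility of the resulting lower bound with the Nochka weights $\omega_j$ (you need the Casoratian zeros to absorb a \emph{weighted} sum of counting functions), are not carried out; you yourself flag (i)--(iii) as unresolved. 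The paper's partition/Borel method avoids all three difficulties simultaneously by never leaving general position and never needing to quantify how many Casoratian zeros a single hyperplane zero generates beyond ``at least as many''.
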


Here we say that the pre-image of $H(f(z))$ for a meromorphic mapping $f: \mathbb{C}^{m}\rightarrow \mathbb{P}^{n}(\mathbb{C})$ intersecting a hyperplane $H$ of $\mathbb{P}^{n}(\mathbb{C})$ is forward invariant with respect to the mapping $\tau=q\cdot z+c$ if $\tau(H(f)^{-1})\subset H(f)^{-1}$  where  $\tau (H(f)^{-1})$ and $H(f)^{-1}$  are considered to be multi-sets in which each point is repeated according to its multiplicity. By this definition the (empty and thus forward invariant) pre-images of the usual Picard exceptional values become special cases of forward invariant pre-images.

In this section, by the $q$-difference version of second main theorem given in the section above, we extend Theorem \ref{T-7.1} to the case of meromorphic mappings $f:\mathbb{C}^{m}\rightarrow\mathbb{P}^{n}(\mathbb{C})$ intersecting hyperplanes in general position which have forward invariant preimages under $f$ with respect to the linear mapping $\tau(z)=qz+c,$ where $q\in \mathbb{C}\setminus\{0, 1\}$ and $c=(c_{1}, \ldots, c_{m}).$\par

\begin{theorem}\label{T-7.2} Let $q\in\mathbb{C}\setminus\{0, 1\},$ $c\in\mathbb{C}^{m},$ let $p\in\{1, \ldots, n+1\}.$ Assume that $f$ is a meromorphic mapping from $\mathbb{C}^{m}$ into $\mathbb{P}^{n}(\mathbb{C})$ of zero order. If $p+n$ hyperplanes in general position in $\mathbb{P}^{n}(C)$ have forward invariant preimages under $f$ with respect to the linear mapping
$\tau(z)=qz+c,$ then the image of $f$ is contained in a projective linear subspace over $\mathcal{P}_{q}^{0}$ of dimension $\leq [\frac{n}{p}].$
\end{theorem}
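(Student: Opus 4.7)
The plan is to adapt the Halburd--Korhonen--Tohge argument behind Theorem~\ref{T-7.1} to several complex variables, using Theorem~\ref{T-4.2} in place of its one-variable counterpart. I would argue by contradiction: assume the image of $f$ is not contained in any projective linear subspace over $\mathcal{P}_{q}^{0}$ of dimension $\leq \lfloor n/p\rfloor$, and let $V$ be the smallest such subspace containing $f(\mathbb{C}^{m})$; setting $k=\dim V$, the hypothesis reads $pk\geq n+1$. A standard count using general position in $\mathbb{P}^{n}(\mathbb{C})$ shows that at most $n-k$ of the hyperplanes $H_{1},\ldots,H_{n+p}$ can contain $V$, so after relabeling at least $p+k$ of them, say $H_{1},\ldots,H_{p+k}$, do not. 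I would then perform a change of homogeneous coordinates over $\mathcal{P}_{q}^{0}$ to represent $f=\Lambda\tilde{f}$, where $\tilde{f}=[f_{0}:\cdots:f_{k}]:\mathbb{C}^{m}\to\mathbb{P}^{k}(\mathbb{C})$ is linearly nondegenerate over $\mathcal{P}_{q}^{0}$ and of zero order, and each $L_{j}\circ f$ becomes $\tilde{L}_{j}\circ\tilde{f}$ with $\tilde{L}_{j}$ having coefficients in $\mathcal{P}_{q}^{0}$. The key observation for general position of the restricted family: if some $k+1$ of the $\tilde{L}_{j}$ had a common zero in $V$, one could adjoin $n-k$ hyperplanes containing $V$ and obtain $n+1$ hyperplanes of $\mathbb{P}^{n}(\mathbb{C})$ sharing a point, contradicting general position; this gives $\tilde{L}_{1},\ldots,\tilde{L}_{p+k}$ in general position in $\mathbb{P}^{k}(\mathbb{C})$. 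The preimages of $\tilde{H}_{j}$ under $\tilde{f}$ coincide with those of $H_{j}$ under $f$, so remain forward invariant under $\tau$.

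Next I would apply Theorem~\ref{T-4.2} to $\tilde{f}$ with these $p+k$ restricted hyperplanes; this requires checking that the proof of Theorem~\ref{T-4.2} passes through for hyperplanes with $\mathcal{P}_{q}^{0}$-coefficients (it should, since such coefficients are $\tau$-periodic and of zero order, so all error terms remain $o(T_{\tilde{f}}(r))$), yielding
\[
(p-1)\,T_{\tilde{f}}(r)\leq\sum_{j=1}^{p+k} N\!\left(r,\frac{1}{\tilde{L}_{j}\circ\tilde{f}}\right)-N\!\left(r,\frac{1}{C(f_{0},\ldots,f_{k})}\right)+o\bigl(T_{\tilde{f}}(r)\bigr)
\]
on a set of logarithmic density one. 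To control the Casorati term from below, I would exploit forward invariance: for $g=\tilde{L}_{j}\circ\tilde{f}$, one has $\nu_{\overline{g}^{[s]}}\geq\nu_{g}$ as divisors for every $s\geq 0$. For any $(k+1)$-subset of indices, a short computation using $\tau$-invariance of $\mathcal{P}_{q}^{0}$-coefficients should give the factorization $C(\tilde{L}_{j_{0}}\circ\tilde{f},\ldots,\tilde{L}_{j_{k}}\circ\tilde{f})=A\cdot C(f_{0},\ldots,f_{k})$ with a nonzero $A\in\mathcal{P}_{q}^{0}$. Since general position in $\mathbb{P}^{k}(\mathbb{C})$ ensures at most $k$ of the $g_{j}$ vanish at any point, the column corresponding to a vanishing $g$ drops the determinant's order by at least $\nu_{g}$, and multilinearity should yield the divisor inequality $\nu_{C(f_{0},\ldots,f_{k})}\geq\sum_{j=1}^{p+k}\nu_{\tilde{L}_{j}\circ\tilde{f}}$ modulo the divisor of $A$, which contributes $o(T_{\tilde{f}}(r))$ after integration. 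Substituting back yields $(p-1)T_{\tilde{f}}(r)\leq o(T_{\tilde{f}}(r))$, contradicting $p\geq 2$; the case $p=1$ is trivial since $\lfloor n/1\rfloor=n$.

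The main obstacle will be twofold. First, I will need to carefully justify the moving-target variant of Theorem~\ref{T-4.2} with hyperplane coefficients in $\mathcal{P}_{q}^{0}$, and establish the Casorati factorization in the same generality; neither is deep but both govern the error terms and deserve a careful verification using Theorem~\ref{q-LDL} and the zero-order hypothesis. Second, the cleanest form of the general-position claim for the restricted forms holds when exactly $n-k$ hyperplanes contain $V$, and I expect the subtle point will be handling the case when fewer do: either by selecting a smaller sub-family that is genuinely in general position, or by invoking a Nochka-weight style refinement of the subgeneral-position SMT. This is the conceptual heart of the proof, and the argument's robustness hinges on a clean resolution of it.
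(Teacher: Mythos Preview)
Your approach is a genuine alternative to the paper's, but the paper takes a route that sidesteps both of the obstacles you flag at the end. Rather than reducing to the minimal $\mathcal{P}_q^0$-subspace and invoking a moving-target second main theorem there, the paper follows Green's Borel-partition argument: set $g_j = H_j(f)$ and partition $T=\{1,\ldots,n+p\}$ into classes $S_1,\ldots,S_l$ under the relation $i\sim j \Leftrightarrow g_i/g_j\in\mathcal{P}_q^0$. If some complement $T\setminus S_k$ has at least $n+1$ elements, one picks $s_0\in S_k$ together with $n+1$ indices outside $S_k$; general position yields a nontrivial $\mathbb{C}$-linear relation among the corresponding $n+2$ functions $g_j$, and a $q$-difference Borel lemma (Lemma~\ref{L-7.2}, proved via the auxiliary nondegeneracy Lemma~\ref{L-7.1}) forces the single term $g_{s_0}$ in its own class to vanish identically, so $f(\mathbb{C}^m)\subset H_{s_0}$. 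Otherwise every $S_k$ has at least $p$ elements, hence $l\le (n+p)/p$, and the relations $g_i=\beta_{ij}g_j$ inside each class produce at least $n+1-l\ge n-n/p$ independent $\mathcal{P}_q^0$-linear relations among the $f_i$, giving the dimension bound.

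The decisive advantage is that the only place Theorem~\ref{T-4.2} enters is inside Lemma~\ref{L-7.1}, where it is applied to the \emph{constant-coefficient} hyperplanes $w_j=0$ $(0\le j\le n-1)$ and $w_0+\cdots+w_{n-1}=0$. No moving-target extension is needed, and no subgeneral-position analysis arises. Your proposal is viable in principle, but the two obstacles you yourself identify are real: a moving-target version of Theorem~\ref{T-4.2} over $\mathcal{P}_q^0$ requires separate justification (the Casorati factorization with $\mathcal{P}_q^0$-coefficients and the resulting error control are not automatic), and when fewer than $n-k$ of the $H_j$ contain $V$ the restricted family need not be in general position in $\mathbb{P}^k$, so a Nochka-type refinement or an inductive workaround becomes unavoidable. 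The Borel-lemma route in the paper avoids both issues and stays entirely within the constant-coefficient tools already established.
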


Before proving Theorem \ref{T-7.2}, we need two lemmas as follows.\par

\begin{lemma}\label{L-7.1}
Let $q\in \mathbb{C}\setminus\{0, 1\},$ $c\in\mathbb{C}^{m},$ and $f=[f_{0}, \ldots, f_{n}]$ be a meromorphic mapping from $\mathbb{C}^{m}$ into $\mathbb{P}^{n}(\mathbb{C})$ of zero order, and let all zeros of $f_{0}, \ldots, f_{n}$ be forward invariant with respect to the linear mapping $\tau(z)=qz+c.$ If $\frac{f_{i}}{f_{j}}\not\in \mathcal{P}^{0}_{q}$ for all $i, j\in \{0,\ldots,n\}$ such that $i\neq j,$ then $f$ is linearly nondegenerate over $\mathcal{P}^{0}_{q}.$
\end{lemma}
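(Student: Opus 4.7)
The plan is to argue by contradiction using minimality of a $\mathcal{P}_q^0$-linear relation, and to translate the hypothesis into a distinctness condition on the multipliers $h_j := \bar{f}_j/f_j$. Suppose $f$ is linearly degenerate over $\mathcal{P}_q^0$; pick a nontrivial relation $\sum_{j\in S} c_j f_j \equiv 0$ with $c_j \in \mathcal{P}_q^0 \setminus \{0\}$ and $k := |S|$ minimal. Since no $f_j$ is identically zero we have $k \geq 2$, and the case $k = 2$ is eliminated at once: $c_i f_i + c_j f_j \equiv 0$ forces $f_i/f_j = -c_j/c_i \in \mathcal{P}_q^0$, contradicting the hypothesis. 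The remaining case is $k \geq 3$.

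Next I would exploit the forward-invariance hypothesis. Since the zero divisor of each entire function $f_j$ is sent into itself by $\tau(z)=qz$, the quotient $h_j := \bar{f}_j/f_j$ is \emph{entire} (the would-be poles at zeros of $f_j$ are cancelled by zeros of $\bar{f}_j$) and has order zero since $f_j$ does. The hypothesis admits the following clean reinterpretation: since $(f_i/f_j)(qz)/(f_i/f_j)(z) = h_i/h_j$ and $f_i/f_j$ has order zero, one has $f_i/f_j \in \mathcal{P}_q^0$ if and only if $h_i = h_j$. Hence the hypothesis is equivalent to the $h_j$ being pairwise distinct.

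For $k \geq 3$, applying $\tau$ to the minimal relation (using $\bar{c}_j = c_j$) gives $\sum c_j h_j f_j \equiv 0$, and subtracting $h_{i_0}$ times the original relation for any fixed $i_0 \in S$ yields
\[
\sum_{j\in S\setminus\{i_0\}} c_j (h_j - h_{i_0}) f_j \equiv 0,
\]
a nontrivial relation of length $k - 1$, all of whose coefficients $c_j (h_j - h_{i_0}) \in \mathcal{M}$ are nonzero by distinctness. Iterating this shift-and-subtract procedure (applying $\tau$ again, using $\bar{c}_j = c_j$, and eliminating one more term at each step via a suitable linear combination) produces, after $k-1$ steps, a single-term identity $\Phi \cdot f_{j^*} \equiv 0$ with $\Phi \in \mathcal{M}$. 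Provided $\Phi \not\equiv 0$, this forces $f_{j^*} \equiv 0$, contradicting the reducedness of $[f_0, \ldots, f_n]$. Equivalently, by Lemma~\ref{L-4.1}(ii) together with $\bar{f}_j^{[m]} = h_j^{(m)} f_j$ (where $h_j^{(m)} = h_j \bar{h}_j \cdots \bar{h}_j^{[m-1]}$), one has the factorisation
\[
C(f_0, \ldots, f_n) = \Big(\prod_{j=0}^n f_j\Big) \cdot \det\big[h_j^{(m)}\big]_{m,j=0}^n,
\]
so that linear degeneracy of $f$ over $\mathcal{P}_q^0$ amounts to the vanishing of the latter ``$q$-Vandermonde'' type determinant.

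The main obstacle is precisely the verification that $\Phi \not\equiv 0$ (equivalently, the non-vanishing of this $q$-Vandermonde determinant). The difficulty is that the iterated coefficients live in $\mathcal{M}$ rather than in $\mathcal{P}_q^0$, so minimality cannot be invoked directly at intermediate stages; the crux of the argument is therefore a careful book-keeping of how the $\mathcal{M}$-coefficients evolve under successive $\tau$-shifts, using the pairwise distinctness of the entire order-zero functions $h_0, \ldots, h_n$ (and of their $\tau$-iterates) to ensure that the factor arising after $k-1$ eliminations does not collapse to zero, in direct analogy with the classical Vandermonde argument built from distinct scalars.
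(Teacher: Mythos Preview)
Your proposal has a genuine gap that you yourself identify but do not close: the non-vanishing of $\Phi$ after $k-1$ eliminations, equivalently of the determinant $\det[h_j^{(m)}]$. Your appeal to the ``classical Vandermonde argument'' does not go through. In the scalar case the entries are powers $\lambda_j^m$ and row-reduction extracts clean factors $\lambda_j-\lambda_i$; here the $(m,j)$ entry is the telescoping product $h_j\bar h_j\cdots\bar h_j^{[m-1]}$, and since $\bar h_j\neq h_j$ in general the determinant does \emph{not} factor as $\prod_{i<j}(h_i-h_j)$. Concretely: after one elimination your coefficients $c_j(h_j-h_{i_0})$ lie only in $\mathcal{M}$, so minimality over $\mathcal{P}_q^0$ is lost; applying $\tau$ once more turns them into $c_j(\bar h_j-\bar h_{i_0})h_j$, and there is no way to combine this with the previous relation so as to pull out a second nonzero factor. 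Pairwise distinctness of the entire zero-order functions $h_j$ is simply not enough, by elementary algebra, to force such a determinant to be nonvanishing, and your sketch supplies no substitute argument.

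The paper's proof circumvents this obstacle by an analytic rather than algebraic descent. Assuming a nontrivial relation $\sum_j A_jf_j=0$ over $\mathcal{P}_q^0$, one packages $g_j:=FA_jf_j$ (for a suitable meromorphic multiplier $F$) into a reduced map $G=[g_0,\ldots,g_{n-1}]:\mathbb{C}^m\to\mathbb{P}^{n-1}(\mathbb{C})$ of zero order whose zeros remain forward invariant. If $G$ were linearly nondegenerate over $\mathcal{P}_q^0$, the $q$-difference second main theorem (Theorem~\ref{T-4.2}) applied to the $n$ coordinate hyperplanes $\{w_j=0\}$ together with $\{w_0+\cdots+w_{n-1}=0\}$ would give
\[
T_G(r)\leq \sum_{j=0}^{n}N\Big(r,\frac{1}{g_j}\Big)-N\Big(r,\frac{1}{C(g_0,\ldots,g_{n-1})}\Big)+o(T_G(r)),
\]
and forward invariance forces $\sum_j N(r,1/g_j)\leq N(r,1/C(g_0,\ldots,g_{n-1}))$, yielding the contradiction $T_G(r)=o(T_G(r))$. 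Hence $G$ is linearly degenerate over $\mathcal{P}_q^0$, i.e.\ the relation can be shortened \emph{while staying over} $\mathcal{P}_q^0$; iterating reaches a two-term relation, which your own argument disposes of. The point is that the nonvanishing you need is obtained as a \emph{consequence} of the second main theorem and the forward-invariance bookkeeping, not by a Vandermonde-type factorisation.
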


\begin{proof}
Assume that the conclusion is not true, that is there exist $A_{0}, \ldots, A_{n}\in \mathcal{P}^{0}_{q}$ such that
\begin{equation}\label{E-6.1} A_{0}f_{0}+\cdots+A_{n-1}f_{n-1}=A_{n}f_{n}\end{equation}
and such that not all $A_{j}$ are identically zero. Without loss of generality we may assume that none of $A_{j}$ are identically zero. Since all zeros of $f_{0}, \ldots, f_{n}$ are forward invariant with respect to the linear mapping $\tau(z)=qz+c$ and since $A_{0}, \ldots, A_{n}\in \mathcal{P}^{0}_{q},$ we can choose a meromorphic function $F$ on $\mathbb{C}^{m}$ such that $FA_{0}f_{0}, \ldots, FA_{n}f_{n}$ are holomorphic functions on $\mathbb{C}^{m}$ without common zeros and such that the preimages of all  zeros of $FA_{0}f_{0}, \ldots, FA_{n}f_{n}$ are forward invariant with respect to the linear mapping $\tau(z)=qz+c.$ Then we have \begin{eqnarray}\label{E-6.2}\limsup_{r\rightarrow\infty}\frac{\log^{+}\left(N(r, \frac{1}{F})+N(r, F)\right)}{\log r}=0\end{eqnarray}
and $FA_{0}f_{0},\ldots, FA_{n-1}f_{n-1}$ cannot have any common zeros.\par

Denote $g_{j}:=FA_{j}f_{j}$ for $0\leq j\leq n.$ Then $T_{G}(r)$ is well defined for $G=[g_{0}, \ldots, g_{n-1}]$, which is a holomorphic mapping from $\mathbb{C}^{m}$ into $\mathbb{P}^{n-1}(\mathbb{C}).$ Then by the definition of Nevanlinna-Cartan's characteristic function and the Jensen's Formula, we have
\begin{eqnarray*}T_{G}(r)&=&\int_{S_{m}(r)}\log\|G\|\eta_{m}(z)+O(1)\\
&\leq&\int_{S_{m}(r)}\log |F(z)|\eta_{m}(z)+\int_{S_{m}(r)}\log\|f(z)\|\eta_{m}(z)\\
&&+\sum_{j=0}^{n-1}\int_{S_{m}(r)}\log^{+}|A_{j}|\eta_{m}(z)+O(1)\\
&\leq& N\left(r, \frac{1}{F}\right)-N(r, F)+T_{f}(r)+\sum_{j=0}^{n-1}T_{A_{j}}(r)
\end{eqnarray*}
which together with \eqref{E-6.2} imply that the order of $G$ satisfies $\zeta(G)=0.$\par

Assume that the meromorphic mapping $G: \mathbb{C}^{m}\rightarrow\mathbb{P}^{n-1}(\mathbb{C})$ is linearly nondegenerate over $\mathcal{P}_{q}^{0}.$ Then by Lemma \ref{L-4.1}, it follows that  $C(g_{0}, \ldots, g_{n-1})\not\equiv 0.$ Define the following hyperplanes
$$H_{j}:\quad  w_{j}=0, \quad 0\leq j\leq n-1,$$
and $$H_{n}:\quad w_{0}+w_{1}+\ldots+w_{n-1}=0,$$ where $[w_{0}, \ldots, w_{n-1}]$ is a homogeneous coordinate system of $\mathbb{P}^{n-1}(\mathbb{C}).$ So, we have $H_{j}(G(z))=g_{j}(z)$ for $0\leq j\leq n-1$ and $$H_{n}(G(z))=g_{0}(z)+\ldots+g_{n-1}(z)=F(z)A_{n}(z)f_{n}(z)=g_{n}(z).$$ Clearly, the $n+1$ hyperplanes $H_{0}, \ldots, H_{n}$ are located in general position of $\mathbb{P}^{n-1}(\mathbb{C}).$ Hence by Theorem \ref{T-4.2} we have
\begin{eqnarray*}T_{G}(r)&=&
\left((n+1)-(n-1)-1\right)T_{G}(r)\\
&\leq& \sum_{j=0}^{n}N\left(r, \frac{1}{g_{j}}\right)-N\left(r, \frac{1}{C(g_{0}, \ldots, g_{n-1})}\right)+o(T_{G}(r))
\end{eqnarray*} for all $r$ on a set of logarithmic density one.\par

Since the preimages of all zeros of $g_{0},$ $\ldots,$ $g_{n}$ are forward invariant with
respect to $\tau(z)=qz+c,$ all zeros of $g_{j}, j = 0,\ldots , n-1,$ are zeros of the $q$-Casorati determinant $C(g_{0}, \ldots , g_{n-1})$ with the same or higher multiplicity. Moreover, since $g_{0},\ldots, g_{n}$ do not have any common zeros, it follows in particular that for each $z_{0} \in\mathbb{C}^{m}$ such that $g_{n}(z_{0})= 0$ with multiplicity $m_{0}$ there exists $k_0 \in\{0,\ldots , n-1\}$ such that $g_{k_{0}}:=FA_{k_{0}}f_{k_{0}}\not\equiv 0.$  Using \eqref{E-6.1} we may write
\begin{eqnarray*}C(g_{0}, \ldots , g_{n-1})=
C(g_{0},\ldots , g_{k_{0}-1}, g_{n}, g_{k_{0}+1},\ldots, g_{n-1})\end{eqnarray*}
which implies that $C(g_{0}, \ldots , g_{n-1})$ has a zero at $z_0$ with multiplicity
$m_0$ at least. Also, at any common zero the functions $g_{j_k}:=FA_{j_k}
f_{j_k}$ with multiplicities $m_{j_k},$ $k=1, \ldots, l,$ where $\{j_1, \ldots , j_l\} \subset \{1, \ldots, n\}$ and $l \leq n-2,$ the Casorati
determinant $C(g_{0}, \ldots, g_{n-1})$ has a zero of multiplicity $\geq\sum_{k=1}^{l}m_{j_k}.$ This implies
\begin{eqnarray*}
\sum_{j=0}^{n}N\left(r, \frac{1}{g_{j}}\right)\leq N\left(r, \frac{1}{C(g_{0}, \ldots, g_{n-1})}\right).
\end{eqnarray*}
Hence, it follows that $T_{G}(r)=o(T_{G}(r))$ for all $r$ on a set of logarithmic density one, which is an contradiction.\par

Therefore, the meromorphic mapping $G: \mathbb{C}^{m}\rightarrow\mathbb{P}^{n-1}(\mathbb{C})$ is linearly degenerate over $\mathcal{P}_{q}^{0},$ and thus there exist $B_{0}, \ldots, B_{n-1}\in \mathcal{P}^{0}_{q}$ such that
$$B_{0}f_{0}+\cdots+B_{n-2}f_{n-2}=B_{n-1}f_{n-1}$$
and such that not all $B_{j}$ are identically zero. By repeating similar discussions as above it follows that there exist $L_{i}, L_{j}\in\mathcal{P}_{q}^{0}$ such that $$L_{i}f_{i}=L_{j}f_{j}$$ for some $i\neq j$ and not all $L_{i}$ and $L_{j}$ are identically zero. This contradicts
the condition that $\frac{f_{i}}{f_{j}}\not\in \mathcal{P}^{0}_{q}$ for all $\{i,j\}\subset\{0,\ldots,n\}.$  Therefore, the proof is completed.
\end{proof}

The following lemma is  a $q$-difference analogue of the Borel's theorem.\par

\begin{lemma}\label{L-7.2}
Let $q\in \mathbb{C}\setminus\{0, 1\},$ $c\in\mathbb{C}^{m},$ and $f=[f_{0}, \ldots, f_{n}]$ be a meromorphic mapping from $\mathbb{C}^{m}$ into $\mathbb{P}^{n}(\mathbb{C})$ of zero order, and let all zeros of $f_{0}, \ldots, f_{n}$ be forward invariant with respect to the linear mapping $\tau(z)=qz+c.$ Let
$$S_{1}\cup\cdots\cup S_{l}$$ be the partition of $\{0, 1, \ldots, n\}$ formed in such a way that $i$ and $j$ are
in the same class $S_{k}$ if and only if $\frac{f_{i}}{f_{j}}\in \mathcal{P}^{0}_{q}.$ If
$$f_{0}+\ldots+f_{n}=0,$$
then $$\sum_{j\in S_{k}}f_{j}=0$$ for all $k\in\{1,\ldots,l\}.$
\end{lemma}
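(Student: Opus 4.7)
The plan is to reduce the statement to Lemma~\ref{L-7.1}. For each class $S_k$ I would fix a representative index $j_k \in S_k$; by the defining property of the partition, the quotient $a_j := f_j/f_{j_k}$ lies in $\mathcal{P}_q^0$ for every $j \in S_k$, with $a_{j_k} \equiv 1$. Substituting $f_j = a_j f_{j_k}$ into the hypothesis $f_0 + \cdots + f_n = 0$ and regrouping the sum according to the partition yields
\[
\sum_{k=1}^{l} b_k f_{j_k} = 0, \qquad b_k := \sum_{j \in S_k} a_j,
\]
where each $b_k \in \mathcal{P}_q^0$ since $\mathcal{P}_q^0$ is closed under finite sums (both the $q$-periodicity $h(qz) = h(z)$ and the zero-order growth bound are preserved under addition). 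Because $f_{j_k} \not\equiv 0$, the identity $\sum_{j \in S_k} f_j = b_k f_{j_k}$ shows that the conclusion is equivalent to $b_k \equiv 0$ for every~$k$.

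Next I would apply Lemma~\ref{L-7.1} to the tuple $[f_{j_1}, \ldots, f_{j_l}]$, viewed as a representation of a meromorphic mapping $g : \mathbb{C}^m \to \mathbb{P}^{l-1}(\mathbb{C})$ (the case $l=1$ being immediate). By the very choice of representatives from distinct classes, $f_{j_k}/f_{j_{k'}} \notin \mathcal{P}_q^0$ for all $k \neq k'$. The zero sets of each $f_{j_k}$ are forward invariant with respect to $\tau(z) = qz$ because this is assumed for all of $f_0, \ldots, f_n$. Finally, the order of $g$ is zero, inherited from $f$ via $\max_k |f_{j_k}| \leq \|f\|$, which gives $T_g(r) \leq T_f(r) + O(1)$. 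Lemma~\ref{L-7.1} then shows $g$ to be linearly nondegenerate over $\mathcal{P}_q^0$, and this is incompatible with the displayed linear relation unless every $b_k$ vanishes identically.

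The delicate point to verify is that $[f_{j_1}, \ldots, f_{j_l}]$ need not be a reduced representation of $g$, since the selected components may share common zeros that are not zeros of every $f_0, \ldots, f_n$. However, the hypotheses of Lemma~\ref{L-7.1}---zero order of the mapping, forward invariance of the zero set of each component, and the ratio condition---are intrinsic to the given tuple of functions and do not require reducedness; in the proof of Lemma~\ref{L-7.1} the auxiliary function $F$ is introduced precisely to absorb any common zeros while preserving forward invariance. This is the only place where extra care is needed, and once it is dispatched the reduction above closes the proof.
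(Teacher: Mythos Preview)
Your proposal is correct and follows essentially the same route as the paper: pick a representative $j_k$ in each class, rewrite $\sum f_j=0$ as $\sum_k B_k f_{j_k}=0$ with $B_k\in\mathcal{P}_q^0$, and invoke Lemma~\ref{L-7.1} to force every $B_k\equiv 0$. Your extra care about closure of $\mathcal{P}_q^0$ under sums, inheritance of zero order, and the reducedness issue is more explicit than the paper's version, which applies Lemma~\ref{L-7.1} directly without discussing these points.
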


\begin{proof} Suppose that $i\in S_{k},$ $k\in\{0,\ldots, l\}.$ Then by the condition of the lemma, $f_{i}=A_{i, j_{k}}f_{j_{k}}$ for some $A_{i, j_{k}}\in\mathcal{P}_{q}^{0}$ whenever the indexes $i$ and $j_{k}$ are in the same class $S_{k}.$ This implies that
$$0=\sum_{k=0}^{n}f_{k}=\sum_{k=1}^{l}\sum_{i\in S_{k}}A_{i,j_{k}}f_{j_{k}}=\sum_{k=1}^{l}B_{k}f_{j_{k}}$$ where $B_{k}=\sum_{i\in S_{k}}A_{i,j_{k}}\in\mathcal{P}_{q}^{0}.$ This gives that $f_{j_{1}}, \ldots, f_{j_{l}}$ are linearly degenerate over $\mathcal{P}_{q}^{0}.$ Hence by Lemma \ref{L-7.1} all $B_{k}$ $(k=1,\ldots, l)$ are identically zero. Thus it follows
$$\sum_{i\in S_{k}}f_{i}=\sum_{i\in S_{k}}A_{i,j_{k}}f_{j_{k}}=B_{k}f_{j_{k}}\equiv 0$$
for all $k=\{1,\ldots ,l\}.$
\end{proof}

\begin{proof}[Proof of Theorem \ref{T-7.2}.]
We modify the method of proof of \cite[Theorem~1]{green-1} as follows. Denote $T=\{1, \ldots, n+p\}.$ Let $H_{j}$ be defined by
$$H_{j}:\quad h_{j0}(z)w_{0}+\ldots+h_{jn}(z)w_{n}=0\quad (j\in T)$$ where $[w_{0}, \ldots, w_{n}]$ is a homogeneous coordinate system of $\mathbb{P}^{n}(\mathbb{C}).$ Since $\{H_{j}\}_{j\in T}$ are in general position of $\mathbb{P}^{n}(\mathbb{C}),$ any $n+2$ of $H_{j}$ satisfy a linear relation with nonzero coefficients in $\mathbb{C}.$ By conditions of the theorem, holomorphic functions
$$g_{j}:=H_{j}(f)=h_{j0}f_{0}+\ldots+h_{jn}f_{n}$$ satisfy
$$\{\tau(g_{j}^{-1}(\{0\}))\}\subset \{g_{j}^{-1}(\{0\})\}$$ for all $j\in T,$ where $\{\cdot\}$ denotes a multiset with counting multiplicities of its elements. We say that $i\sim j$ if $g_{i}=\beta g_{j}$ for some $\beta\in\mathcal{P}^{0}_{q}\setminus\{0\}.$  Hence
$$T=\bigcup_{j=1}^{l}S_{j}$$ for some $l\in T.$\par

Firstly, assume that the complement of $S_{k}$ has at least $n+1$ elements for some $k\in\{1, \ldots l\}.$ Choose an element $s_{0}\in S_{k},$ and denote $U=(T\setminus S_{k})\cup\{s_{0}\}.$ Then $U$ contains at least $n+2$ elements, and thus there is a subset $U_{0}\subset U$ such that $U_{0}\cap S_{k}=\{s_{0}\}$ and $\sharp U_{0}=n+2.$ Therefore there exists $\beta_{j}\in\mathbb{C}\setminus\{0\}$ such that
$$\sum_{j\in U_{0}}\beta_{j}H_{j}=0.$$ Hence,
 $$\sum_{j\in U_{0}}\beta_{j}g_{j}=\sum_{j\in U_{0}}\beta_{j}H_{j}(f)=0.$$

Without loss of generality, we may assume that $U_{0}=\{s_{1}, \ldots, s_{n+1}\}\cup\{s_{0}\}.$ It is easy to see from above discussion that all of zeros of $\beta_{j}g_{j}$ $(j\in U_{0})$ are forward invariant with respect to the linear mapping $\tau(z)=qz+c,$ and $$G:=[\beta_{s_{0}}g_{s_{0}}: \beta_{s_{1}}g_{s_{1}}: \cdots: \beta_{s_{n+1}}g_{s_{n+1}}]$$ is a meromorphic mapping from $\mathbb{C}^{m}$ into $\mathbb{P}^{n+1}(\mathbb{C})$ with zero order. Furthermore, $\frac{\beta_{i}g_{i}}{\beta_{s_{0}}g_{s_{0}}}\not\in\mathcal{P}_{q}^{0}$ for any $i\in U_{0}\setminus\{s_{0}\},$ thus $i\not\sim s_{0}.$ Hence by Lemma \ref{L-7.2}  we have $\beta_{s_{0}}g_{s_{0}}=0,$ and thus $H_{s_{0}}(f(z))\equiv 0.$ This means that the image $f(\mathbb{C}^{m})$ is included in the hyperplane $H_{s_{0}}$ of $\mathbb{P}^{n}(\mathbb{C}).$\par

Secondly, assume that the set $T\setminus S_{k}$ has at most $n$ elements. Then $S_{k}$ has at least $p$ elements for all $k=1, \ldots, l.$ This implies that $$l\leq \frac{n+p}{p}.$$\par

Let $V$ be any subset of $T$ with $\sharp V=n+1.$ Then $\{H_{j}\}_{j\in V}$ are linearly independent. Denote $V_{k}:=V\cap S_{k}.$ Then we have
$$V=\bigcup_{k=1}^{l}V_{k}.$$  Since each set $V_{k}$ gives raise to $\sharp (V_{k}-1)$ equations over the field $\mathcal{P}_{q}^{0},$ it follows that there are  at least
\begin{eqnarray*}\sum_{k=1}^{l}(\sharp V_{k}-1)&=&n+1-l\geq n+1-\frac{n+p}{p}\\&=&n-\left(\frac{n}{p}\right)\end{eqnarray*}
 linear independent relations over the field $\mathcal{P}_{q}^{0}.$ This means that the image of $f$ is contained in a linear subspace over $\mathcal{P}_{q}^{0}$ of dimension $\leq [\frac{n}{p}].$ The proof of the theorem is finished.

\end{proof}

According  to the definition of forward invariant pre-image, the following result is an extension of the Picard's theorem under the ``zero order'' growth condition.\par

\begin{theorem}\label{T-7.3} Let $f$ be a meromorphic mapping  with zero order from $\mathbb{C}^{m}$ into $\mathbb{P}^{n}(C),$ and let $\tau(z)=qz+c,$ where $q\in \mathbb{C}\setminus\{0, 1\}$ and $c\in\mathbb{C}^{m}.$ If $\tau((f, H_{j})^{-1})\subset (f,  H_{j})^{-1}$ (counting multiplicity) hold for $p$ distinct hyperplanes $\{H_{j}\}_{j=1}^{p}$ in general position in $\mathbb{P}^{n}(\mathbb{C}),$ and if $p>n+1,$ then $f(z)\equiv f(qz+c).$
\end{theorem}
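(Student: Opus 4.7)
The plan is to reduce the statement to Theorem~\ref{T-7.2} and iterate. First, I would observe that for a reduced representation $f=[f_{0}:\cdots:f_{n}]$, the identity $f(z)\equiv f(qz)$ in $\mathbb{P}^{n}$ is exactly the statement that each ratio $f_{i}/f_{j}$ is $q$-periodic; since $f$ has order zero these ratios have order zero, hence lie in $\mathcal{P}^{0}_{q}$, which in turn is equivalent to $f(\mathbb{C}^{m})$ being contained in a $0$-dimensional projective linear subspace of $\mathbb{P}^{n}$ defined over $\mathcal{P}^{0}_{q}$. So the goal becomes forcing the dimension bound supplied by Theorem~\ref{T-7.2} down to $0$.

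When $p\geq 2n+1$, this is immediate: apply Theorem~\ref{T-7.2} to any $2n+1$ of the given hyperplanes with the parameter set to $n+1$, and read off the dimension bound $[n/(n+1)]=0$. The base $n=1$ of the theorem is already inside this easy range, since $p>2$ forces $p\geq 3=2n+1$.

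For the remaining range $n+2\leq p\leq 2n$ (with $n\geq 2$) I would induct on $n$. For the inductive step apply Theorem~\ref{T-7.2} with parameter $p_{0}=p-n\in\{2,\ldots,n\}$; the image of $f$ is then contained in a projective linear subspace $V\subset\mathbb{P}^{n}$ over $\mathcal{P}^{0}_{q}$ of dimension $s\leq[n/(p-n)]\leq n-1$. If $s=0$ we are done; otherwise pick a basis of $V$ with entries in $\mathcal{P}^{0}_{q}$, identify $V\cong\mathbb{P}^{s}$, and get an induced zero-order meromorphic map $\widetilde{f}:\mathbb{C}^{m}\to\mathbb{P}^{s}$. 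Each original hyperplane restricts to $H'_{j}=H_{j}\cap V$, a proper hyperplane of $V$ over $\mathcal{P}^{0}_{q}$ (proper because $f(\mathbb{C}^{m})\not\subset H_{j}$), and the forward invariance of $(f,H_{j})^{-1}$ transfers directly to $(\widetilde{f},H'_{j})^{-1}$. A cofactor/determinant expansion based on the general position of $H_{1},\ldots,H_{p}$ in $\mathbb{P}^{n}(\mathbb{C})$ shows that, after discarding coincidences, enough of the $H'_{j}$'s remain in general position in $\mathbb{P}^{s}$ over $\mathcal{P}^{0}_{q}$ to produce some $p'>s+1$ restricted hyperplanes in general position. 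Applying the inductive hypothesis in its natural $\mathcal{P}^{0}_{q}$-coefficient version then gives $\widetilde{f}(qz)=\widetilde{f}(z)$ in $\mathbb{P}^{s}$, and pulling back along the identification recovers $f(qz)=f(z)$ in $\mathbb{P}^{n}$.

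The principal obstacle is precisely this recursion. Two things have to be checked: that Theorem~\ref{T-7.2} together with Lemmas~\ref{L-7.1}--\ref{L-7.2} remains valid when the coefficient field of the hyperplanes is upgraded from $\mathbb{C}$ to $\mathcal{P}^{0}_{q}$, and that the degenerate configurations, in which several original hyperplanes restrict to a common hyperplane of $V$, are harmless. The latter is the genuinely delicate point: one has to show that such collapses cannot eat away so many restrictions that the count $p'$ of distinct, generally-positioned restricted hyperplanes drops to $s+1$ or fewer. Once these verifications are carried through, the induction terminates after at most $n$ reductions.
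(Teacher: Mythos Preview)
Your proposal is substantially more elaborate than the paper's. The paper disposes of Theorem~\ref{T-7.3} in two sentences: apply Theorem~\ref{T-7.2} to conclude that the image of $f$ lies in a projective linear subspace over $\mathcal{P}^{0}_{q}$ of dimension at most $[n/p]$, observe that $p>n$ forces this bound to be $0$, and read off $f(z)=f(qz)$. There is no induction, no restriction to subspaces, and no upgrade of coefficient fields.

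Your easy case $p\geq 2n+1$ reproduces this one-shot argument (with the parameter in Theorem~\ref{T-7.2} set to $n+1$), and for $n=1$ the hypothesis $p>n+1$ already forces $p\geq 2n+1$, so nothing further is needed there. For the range $n+2\leq p\leq 2n$ you correctly observe that the symbol $p$ in Theorem~\ref{T-7.2} denotes the \emph{excess} of hyperplanes over $n$, not their total number; a single application with parameter $p-n$ then yields only dimension $\leq [n/(p-n)]\geq 1$, so the paper's two-line argument silently conflates the two uses of the letter $p$ and does not literally close this range. Your inductive descent is a natural attempt to repair it, but the recursion introduces exactly the obstacles you list---extending Theorem~\ref{T-7.2} and Lemmas~\ref{L-7.1}--\ref{L-7.2} to hyperplanes with $\mathcal{P}^{0}_{q}$-coefficients, and controlling how many restricted hyperplanes remain distinct and in general position after intersecting with $V$---and you leave both unresolved. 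So you have gone beyond the paper in spotting a genuine subtlety in the range $n+2\leq p\leq 2n$, but your fix is at this stage a program rather than a completed proof.
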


\begin{proof} By Theorem \ref{T-7.2},  the image of $f$ is contained in a projective linear subspace over $\mathcal{P}_{q}^{0}$ of dimension $\leq [\frac{n}{p}].$
By the assumption $p>n$ it follows  $[\frac{n}{p}]=0.$ Hence $f(z)=f(qz+c).$ The proof of Theorem \ref{T-7.3} is thus completed.\end{proof}

The following corollary follows immediately from the above theorem for $n=1,$  and it can also be seen as a uniqueness theorem for a meromorphic function and  its $q$-shift. \par

\begin{corollary}\label{Cor} Let $f$ be a meromorphic function  with zero order on $\mathbb{C}^{m}$ and let $\tau(z)=qz+c,$ where $q\in \mathbb{C}\setminus\{0, 1\}$ and $c\in\mathbb{C}^{m}.$ If $\tau((f(a_{j})^{-1})\subset (f(a_{j}))^{-1}$ (counting multiplicity) hold for $p$ distinct numbers $\{a_{j}\}_{j=1}^{p}$ in $\mathbb{P},$ and if $p>2,$ then $f(z)\equiv f(qz+c).$
\end{corollary}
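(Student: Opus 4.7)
The plan is to deduce this corollary directly from Theorem~\ref{T-7.3} by specializing to $n=1$. A meromorphic function $f$ on $\mathbb{C}^{m}$ can be identified with a meromorphic mapping $\tilde{f}:\mathbb{C}^{m}\rightarrow\mathbb{P}^{1}(\mathbb{C})$ with reduced representation $\tilde{f}=[f_{0},f_{1}]$ where $f=f_{0}/f_{1}$, and the characteristic functions agree up to $O(1)$, so the order-zero hypothesis carries over unchanged.

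Next, I would identify the hyperplanes of $\mathbb{P}^{1}(\mathbb{C})$ with points: for each $a_{j}\in\mathbb{P}^{1}(\mathbb{C})$ let $H_{j}$ be the hyperplane $\{[w_{0}:w_{1}]:w_{0}-a_{j}w_{1}=0\}$ (with the obvious convention $H_{\infty}=\{w_{1}=0\}$ when $a_{j}=\infty$). Since the $a_{j}$ are distinct, the corresponding hyperplanes $\{H_{j}\}_{j=1}^{p}$ are pairwise distinct, and any two distinct points/hyperplanes in $\mathbb{P}^{1}(\mathbb{C})$ are automatically in general position. With this identification, the divisor $(\tilde{f},H_{j})^{-1}$ coincides with the preimage divisor $(f(a_{j}))^{-1}$ counted with multiplicity, so the hypothesis $\tau((f(a_{j}))^{-1})\subset (f(a_{j}))^{-1}$ is exactly the forward invariance hypothesis of Theorem~\ref{T-7.3}.

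Finally, with $n=1$ the condition $p>n+1$ becomes $p>2$, which is precisely the assumption of the corollary. Applying Theorem~\ref{T-7.3} yields $\tilde{f}(z)\equiv \tilde{f}(qz)$ as maps into $\mathbb{P}^{1}(\mathbb{C})$, and unfolding the identification gives $f(z)\equiv f(qz)$ as meromorphic functions on $\mathbb{C}^{m}$. There is essentially no obstacle here beyond checking that the standard dictionary between points of $\mathbb{P}^{1}(\mathbb{C})$ and hyperplanes in $\mathbb{P}^{1}(\mathbb{C})$ preserves multiplicities of preimages, which is immediate from the reduced representation.
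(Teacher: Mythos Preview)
Your proposal is correct and takes exactly the same approach as the paper: the paper merely states that the corollary ``follows immediately from the above theorem'' (Theorem~\ref{T-7.3}), and what you have written is precisely the $n=1$ specialization spelled out in detail. There is nothing to add.
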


\begin{remark}
Let $c=(0, \ldots, 0).$ If the assumptions of Theorem~\ref{T-7.3} are valid, and, in addition, either $|q|\neq 1$, or $|q|=1$ and the argument of $q$ is not rational, then it follows that $f(z)$ reduces into a constant. Analogous statement is, of course, true for Corollary~\ref{Cor}.
\end{remark}

\section{$q$-difference analogues of the Tumura-Clunie theorem in several complex variables}\label{TCsec}
The Clunie lemma \cite{clunie} for meromorphic functions of one variable in Nevanlinna theory has been a powerful tool in the field of complex differential equations and related fields, particularly the lemma has been used to investigate the value distribution of certain differential polynomials; see \cite{clunie} for the original versions of these results, as well as \cite{hayman, laine}. A slightly more general version of the Clunie lemma can be found in \cite[pp.~218--220]{he-xiao}; see also \cite[Lemma~2.4.5]{laine}. In 2007, the additional assumptions in the He-Xiao version  of the Clunie lemma have been removed by Yang and Ye in \cite[Theorem 1]{yang-ye}. A generalized Clunie lemma for meromorphic functions of several complex variables was proved in \cite{libaoqin}; for some special cases we refer to \cite{hu-li-yang-1, hu-yang-1}. Recently, Hu and Yang \cite{hu-yang} extended the classical Tumura-Clunie theorem (\cite[Theorem 3.9]{hayman} and \cite{mues-steinmetz}) for meromorphic functions of one variable to that of meromorphic functions of several complex variables.  \par

Next we prove a $q$-difference counterpart of the Hu-Yang's version of Tumura-Clunie theorem in several complex variables \cite{hu-yang} as follows. Take a $q$-difference polynomial of several complex variables

\begin{eqnarray}\label{E-7.4}
G(z, f)=\sum_{\lambda\in J} b_{\lambda}(z)\prod_{j=1}^{\tau_{\lambda}}f(q_{\lambda, j}z+c_{\lambda, j})^{\mu_{\lambda,j}},
\end{eqnarray}where $c_{\lambda, j}\in\mathbb{C}^{m},$ $\max_{\lambda\in J}\sum_{j=1}^{\tau_{\lambda}}\mu_{\lambda,j}=n,$ and $q_{\lambda,j}\not=1$ for at least one of the constants $q_{\lambda,j}\in\mathbb{C}\setminus\{0, 1\}.$ Moreover, we assume that the coefficients in \eqref{E-7.4} are meromorphic functions on~$\mathbb{C}^{m}$ and small with respect to the function $f$, which is meromorphic on $\mathbb{C}^{m}.$\par

\begin{theorem}\label{T-8.2}
Let $f$ be a meromorphic function of zero order on $\mathbb{C}^{m}$ such that
\begin{eqnarray}\label{E-7.5}
N\left(r, \frac{1}{f}\right)+N(r, f)=o(T_{f}(r)).
\end{eqnarray} Suppose that the difference polynomial \eqref{E-7.4} of $f(z)$ and its shifts is  of maximal total degree $n.$ If $G$ also satisfies
\begin{eqnarray}\label{E-7.8}
\sum_{\lambda\in J_{n-1}} b_{\lambda}(z)\prod_{j=1}^{\tau_{\lambda}}f(q_{\lambda, j}z+c_{\lambda, j})^{\mu_{\lambda,j}}\not\equiv 0,
\end{eqnarray}
where $J_{n-1}=\{\lambda\in J: \sum_{j=1}^{\tau_{\lambda}}\mu_{\lambda,j}=n-1\},$ then
\begin{eqnarray*}
N\left(r, \frac{1}{G}\right)\neq o(T_{f}(r)).
\end{eqnarray*}
\end{theorem}

For the proof of Theorem \ref{T-8.2}, we first need the Tumura-Clunie theorem of several complex variables due to Hu and Yang.\par

\begin{lemma}\label{L-8.1}\cite[Theorem 2.1]{hu-yang} Suppose that $f$ is meromorphic and not constant in $\mathbb{C}^{m},$ that $$g=f^{n}+P_{n-1}(f),$$ where $P_{n-1}(f)$ is a differential polynomial of degree at most $n-1$ in $f,$ and that $$N(r, f)+N\left(r, \frac{1}{g}\right)=o(T_{f}(r)).$$ Then $$g=\left(f+\frac{\alpha}{n}\right)^{n},$$ where $\alpha$ is a meromorphic function in $\mathbb{C}^{m}$, small with respect to $f$, and determined by the terms of degree $n-1$ in $P_{n-1}(f)$ and by $g.$
\end{lemma}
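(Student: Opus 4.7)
The plan is to argue by contradiction: assume $N(r,1/G)=o(T_f(r))$, rewrite $G$ as an ordinary polynomial in $f(z)$ with small meromorphic coefficients, apply Lemma~\ref{L-8.1} to obtain the structural identity $G=A(z)(f+\alpha/n)^n$ for small functions $A$ and $\alpha$, and finally exploit the hypothesis $N(r,1/f)+N(r,f)=o(T_f(r))$ via a second main theorem to reach a contradiction. The two essential ingredients in the rewriting are Theorem~\ref{q-LDL} together with the assumption that each rescaling parameter has the special form $q_{\lambda,j}=(\tilde q_{\lambda,j},\dots,\tilde q_{\lambda,j})$.

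First I would set $u_{\lambda,j}(z):=f(q_{\lambda,j}z)/f(z)$ for each shift appearing in \eqref{E-7.4}. By the corollaries to Theorem~\ref{q-LDL}, $m(r,u_{\lambda,j})+m(r,1/u_{\lambda,j})=o(T_f(r))$; the zeros and poles of $u_{\lambda,j}$ come only from zeros and poles of $f(z)$ and of $f(q_{\lambda,j}z)$, both of which contribute $o(T_f(r))$ because of the standing hypothesis together with the zero-order invariance of the counting functions under the rescaling $z\mapsto q_{\lambda,j}z$. Hence each $u_{\lambda,j}$ is small with respect to $f$. Substituting $f(q_{\lambda,j}z)=f(z)u_{\lambda,j}(z)$ in \eqref{E-7.4} gives
\begin{equation*}
G(z,f)=\sum_{\lambda\in J}\widetilde{b}_\lambda(z)f(z)^{d_\lambda},\qquad \widetilde{b}_\lambda:=b_\lambda\prod_{j}u_{\lambda,j}^{\mu_{\lambda,j}},\quad d_\lambda:=\sum_{j}\mu_{\lambda,j},
\end{equation*}
with each $\widetilde b_\lambda$ small with respect to $f$ and $\max_\lambda d_\lambda=n$. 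Collecting terms by degree yields $G=A(z)f(z)^n+P_{n-1}(z,f(z))$, where $A:=\sum_{d_\lambda=n}\widetilde b_\lambda$ is small, and $P_{n-1}$ is an ordinary polynomial in $f(z)$ of degree at most $n-1$ with small coefficients.

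Assuming the natural nondegeneracy $A\not\equiv 0$, set $\widetilde g:=G/A$, so that $\widetilde g=f^n+A^{-1}P_{n-1}(f)$. The hypotheses of Lemma~\ref{L-8.1} are in force: $N(r,f)=o(T_f(r))$ is given, and $N(r,1/\widetilde g)\le N(r,1/G)+N(r,A)=o(T_f(r))$ since $N(r,A)$ is bounded by the sum of the counting functions of the $\widetilde b_\lambda$. The lemma therefore supplies a meromorphic function $\alpha$, small with respect to $f$, with $\widetilde g=(f+\alpha/n)^n$, that is,
\begin{equation*}
G(z,f)=A(z)\bigl(f(z)+\alpha(z)/n\bigr)^n.
\end{equation*}
Extracting an $n$th root in the counting of zeros yields $N(r,1/(f+\alpha/n))\le \frac{1}{n}(N(r,1/G)+N(r,A))=o(T_f(r))$, while $N(r,f+\alpha/n)\le N(r,f)+N(r,\alpha)=o(T_f(r))$.

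The hard part is closing the argument. Combined with the hypothesis $N(r,1/f)+N(r,f)=o(T_f(r))$ and the just-derived deficiency of the value $-\alpha/n$, a second main theorem applied to $f\colon\mathbb{C}^m\to\mathbb{P}^1(\mathbb{C})$ with the three targets $0$, $\infty$, $-\alpha/n$ would force $T_f(r)\le o(T_f(r))$, contradicting the nonconstancy of $f$. The obstacle is that Theorem~\ref{T-4.2} and Corollary~\ref{HSMT} are stated for constant hyperplane targets, not for small moving targets. The natural remedies are either to establish a small-function $q$-difference SMT in several complex variables, or to reduce to the constant-target case by an auxiliary device, for instance by passing to the zero-order meromorphic mapping $[f_0:\,f_0+(\alpha/n)f_1:\,f_1]\colon\mathbb{C}^m\to\mathbb{P}^2(\mathbb{C})$ and applying Corollary~\ref{HSMT} with the three coordinate hyperplanes. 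Granting such a small-target SMT, the contradiction is immediate and the proof is complete.
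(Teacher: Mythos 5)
Your proposal does not prove the statement in question. Lemma~\ref{L-8.1} is the Hu--Yang Tumura--Clunie theorem for \emph{differential} polynomials in several complex variables: given $g=f^{n}+P_{n-1}(f)$ with $P_{n-1}$ a differential polynomial of degree at most $n-1$ and $N(r,f)+N(r,1/g)=o(T_{f}(r))$, one must show $g=(f+\alpha/n)^{n}$. The paper quotes this from \cite[Theorem 2.1]{hu-yang} and supplies no proof of its own. What you have written is instead a proof of Theorem~\ref{T-8.2} (the $q$-difference application), and it matches the paper's proof of that theorem quite closely: the substitution $f(q_{\lambda,j}z)=f(z)\,u_{\lambda,j}(z)$, the smallness of the new coefficients via Theorem~\ref{q-LDL} and Lemma~\ref{L-8.2}, the normalization by the top coefficient, and the final appeal to a small-target second main theorem (which the paper resolves by citing \cite{chen-yan}, built on \cite{yamanoi}). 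But your argument explicitly \emph{invokes} Lemma~\ref{L-8.1} to produce the identity $G=A(f+\alpha/n)^{n}$; as a proof of Lemma~\ref{L-8.1} itself it is therefore circular --- you assume exactly the conclusion you were asked to establish.

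A genuine proof of Lemma~\ref{L-8.1} would have to follow the classical Tumura--Clunie line (\cite[Theorem 3.9]{hayman}, \cite{mues-steinmetz}) transported to $\mathbb{C}^{m}$: one extracts the candidate $\alpha$ from the degree-$(n-1)$ part of $P_{n-1}(f)$, sets $h=f+\alpha/n$, and uses the several-variables lemma on the logarithmic derivative (\cite{vitter}, \cite{biancofiore-stoll}) together with the hypotheses $N(r,f)=o(T_f(r))$ and $N(r,1/g)=o(T_f(r))$ to show that $g/h^{n}$ has characteristic $o(T_{f}(r))$ and is in fact identically $1$. None of that machinery appears in your proposal, and nothing in it engages with differential polynomials at all --- your rewriting is built around $q$-difference quotients $f(q_{\lambda,j}z)/f(z)$, which is the wrong object for this lemma. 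The content you did produce belongs to Theorem~\ref{T-8.2}, where it is essentially correct and where your closing remark about the need for a small-function second main theorem correctly identifies the one external input the paper imports for that step.
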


The following result gives a relation between the counting functions $N(r,f(q\cdot z+c))$ and $N(r, f(z))$ of several complex variables. We omit the proof, since it follows essentially by the same reasoning as by considering the combination of \cite[Theorem 1.3]{zhang-korhonen} and \cite[Inequality (12)]{heittokangas} in the case of one variable. \par

\begin{lemma}\label{L-8.2} Let $f$ be a nonconstant meromorphic function on $\mathbb{C}^{m}$ with zero order, and $c\in\mathbb{C}^{m},$ $q\in \mathbb{C}^{m}\setminus\{0, 1\}.$ Then
$$N(r, f(q\cdot z+c))=(1+o(1))N(r, f(z))$$ on a set of lower logarithmic density one.
\end{lemma}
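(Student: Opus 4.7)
The plan is to follow the one-variable strategy of Zhang and Korhonen step by step, with the one-variable counting function replaced by its several-variable analogue $N(r,\nu)=\int_1^r n(t)\,t^{1-2m}\,dt$ from the preliminaries. The argument splits into three pieces: (a) a change of variables that rewrites $n(t,f(qz))$ in terms of $n(\cdot,f)$; (b) integration to convert this into a relation between $N(r,f(qz))$ and $N(|\tilde q|r,f)$; and (c) a zero-order rescaling lemma to absorb the dilation of the radius into a $(1+o(1))$ factor.

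For step (a), the pole divisor of $f(qz)$ satisfies $\nu^\infty_{f(qz)}(z)=\nu^\infty_f(qz)$, and the map $\phi(z)=qz$ is a linear diffeomorphism of $\mathbb{C}^{m}$. Hence
\[
n(t,f(qz))= \int_{B_m(t)} \nu^\infty_f(qz)\,\sigma_m(z) = \int_{\phi(B_m(t))} \nu^\infty_f(w)\,(\phi^{-1})^*\sigma_m(w).
\]
In the principal case $q=(\tilde q,\ldots,\tilde q)$, which is the setting in which the lemma is applied in Section~\ref{q_diff_hyperplane_sec}, one has $\phi(B_m(t))=B_m(|\tilde q|t)$ and $(\phi^{-1})^*\sigma_m=|\tilde q|^{-2(m-1)}\sigma_m$, yielding the clean identity $n(t,f(qz))=|\tilde q|^{-2(m-1)}\,n(|\tilde q|t,f)$. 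For a general $q\in\mathbb{C}^{m}\setminus\{0\}$, $\phi(B_m(t))$ is an ellipsoid sandwiched between $B_m(q_{\min}t)$ and $B_m(q_{\max}t)$ with $q_{\min}=\min_i|q_i|$ and $q_{\max}=\max_i|q_i|$, and $(\phi^{-1})^*\sigma_m$ is pointwise comparable to $\sigma_m$ with constants depending only on $|q_1|,\ldots,|q_m|$; this yields two-sided bounds of the form $c_1\, n(c_2 t,f)\le n(t,f(qz))\le C_1\, n(C_2 t,f)$.

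Integrating step (a) against $t^{1-2m}\,dt$ from $1$ to $r$ and substituting $u=|\tilde q|t$ in the uniform case, the powers of $|\tilde q|$ cancel exactly and one arrives at $N(r,f(qz))=N(|\tilde q|r,f)+O(1)$, where the $O(1)$ absorbs the boundary contribution from $\int_1^{|\tilde q|}$; analogous two-sided bounds hold in the general case. Since $f$ has order zero, the function $N(r,f)$ is itself of order zero in $r$, and the Hayman-type rescaling lemma (the several-variable counterpart of \cite[Lemma~4]{hayman} already used in the proof of Theorem~\ref{q-LDL}) yields $N(cr,f)=(1+o(1))N(r,f)$ on a set of lower logarithmic density one, for every fixed $c>0$. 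Combining this with the previous display delivers the claimed asymptotic $N(r,f(qz))=(1+o(1))N(r,f)$.

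The delicate point is the passage from uniform to general $q$. The clean scaling identity in (a) rests on $\sigma_m$ being invariant up to a scalar under $\phi$, which only happens when $q$ is a scalar multiple of $(1,\ldots,1)$; for non-uniform $q$, the equality must be replaced by ellipsoid-to-ball comparisons, and one has to ensure that the multiplicative constants appearing there do not survive in the final $(1+o(1))$-asymptotic, because the zero-order rescaling lemma only absorbs a dilation of the radius, not a genuine multiplicative factor in front of $N$. Overcoming this will require either a sharper ellipsoid-versus-ball comparison (so that $c_1,C_1\to 1$ in the sandwich) or a two-sided application of the zero-order lemma with matching dilation constants, and this is the step where some care beyond the literal one-variable proof of Zhang and Korhonen is needed.
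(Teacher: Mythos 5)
The paper itself gives no proof of Lemma~\ref{L-8.2}; it only asserts that the argument is ``almost the same'' as the one-variable proof of \cite[Theorem~1.3]{zhang-korhonen}. Your proposal is precisely that adaptation, and for $q=(\tilde q,\ldots,\tilde q)$ it is complete and correct: the pullback computation $n(t,f(qz))=|\tilde q|^{-2(m-1)}n(|\tilde q|t,f)$, the exact cancellation of the powers of $|\tilde q|$ after integrating against $t^{1-2m}\,dt$ to obtain $N(r,f(qz))=N(|\tilde q|r,f)+O(1)$, and the absorption of the dilation of the radius via the zero-order rescaling lemma are all sound, and this is exactly the route the citation points to. You are also right to isolate general $q$ as the delicate point: for non-uniform $q$ the map $z\mapsto qz$ neither preserves balls nor rescales $\sigma_m$ by a scalar, so the method only yields two-sided bounds of the form $c_1N(c_2r,f)\le N(r,f(qz))\le C_1N(C_2r,f)$ with multiplicative constants that the zero-order lemma cannot remove. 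That, however, is a defect of the statement of Lemma~\ref{L-8.2} (which claims arbitrary $q\in\mathbb{C}^m\setminus\{0\}$) rather than of your argument: the paper nowhere supplies the missing step either, and in every place the lemma is actually invoked --- the proof of Corollary~\ref{HSMT} and the proof of Theorem~\ref{T-8.2} --- the relevant $q$ is of the uniform form $(\tilde q,\ldots,\tilde q)$, consistent with the standing restriction acknowledged in the paper's closing Open Question. So your proof establishes everything the paper uses; the clean fix is to restate the lemma for uniform $q$, since neither you nor the authors have a genuinely multi-dimensional argument covering the ellipsoidal case.
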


We note here that by Theorem \ref{q-LDL}, Lemma \ref{L-8.2} and the first main theorem, it is not difficult to get relation of characteristic  functions of $T_{f(qz+c)}(r)$ and $T_{f(z)}(r)$ of several complex variables.\par

\begin{lemma}\label{L-8.3} Let $f$ be a nonconstant meromorphic function on $\mathbb{C}^{m}$ with zero order, and $c\in\mathbb{C}^{m},$ $q\in \mathbb{C}\setminus\{0, 1\}.$ Then
$$T_{f(qz+c)}(r)=(1+o(1))T_{f(z)}(r)$$ on a set of lower logarithmic density one.
\end{lemma}

\begin{proof}[Proof of Theorem \ref{T-8.2}.] Suppose that the conclusion is not true, and we assume that
$$N\left(r, \frac{1}{G}\right)= o(T_{f}(r)).$$ To prove this theorem, we propose to follow the idea in the proof of \cite[Theorem~1]{laine-yang-2} (see also \cite{chen-huang-zheng} or \cite[Theorem 4.3.4]{chen-book}). Since the difference polynomial \eqref{E-7.4} of $f(z)$ and its shifts is of maximal total degree~$n,$
\begin{eqnarray*}
G(z, f)&=&\sum_{\lambda\in J} b_{\lambda}(z)\sum_{j=1}^{\tau_{\lambda}}f(q_{\lambda, j}z+c_{\lambda, j})^{\mu_{\lambda,j}}\\
&=&\sum_{\lambda\in J} b_{\lambda}(z)\sum_{j=1}^{\tau_{\lambda}}\left[\left(\frac{f(q_{\lambda, j}z+c_{\lambda, j})}{f(z)}\right)^{\mu_{\lambda,j}}\cdot f(z)^{\mu_{\lambda,j}}\right]\\
&:=&\sum_{j=0}^{n}\tilde{b}_{j}(z)f(z)^{j},
\end{eqnarray*}  where each of the coefficients $\tilde{b}_{j}(z)$ $(j=1, \ldots, n)$ is the sum of finitely many terms of type
$$b_{\lambda}(z)\left(\frac{f(q_{\lambda, j}z+c_{\lambda, j})}{f(z)}\right)^{\mu_{\lambda,j}}.$$ Hence \begin{eqnarray*}
\frac{G(z, f)}{\tilde{b}_{n}(z)}=f(z)^{n}+\sum_{j=0}^{n-1}\frac{\tilde{b}_{j}(z)}{\tilde{b}_{n}(z)}f(z)^{j}.
\end{eqnarray*} By the assumption \eqref{E-7.8}, we have $\sum_{j=0}^{n-1}\frac{\tilde{b}_{j}(z)}{\tilde{b}_{n}(z)}f(z)^{j}\not\equiv 0.$\par\par

Note that $q_{\lambda,j}\in \mathbb{C}\setminus\{0\}$ and all the coefficient functions $b_{\lambda}(z)$ $(\lambda\in J)$ are small with respect to $f.$ Then by Theorem \ref{q-LDL} we get that for all $j=1,\ldots, n,$
$$m(r, \tilde{b}_{j})=o(T_{f}(r))$$
holds for all $r$ on a set of logarithmic density one. Moreover, by the assumption \eqref{E-7.5}, Lemma \ref{L-8.2} and Lemma \ref{L-8.3} we have
$$N(r, \tilde{b}_{j})=o(T_{f}(r)),$$
and thus $$T(r,\tilde{b}_{j})=o(T_{f}(r)), \quad j\in\{0, 1, \ldots, n\}$$
and
\begin{equation*} N\left(r, \frac{1}{\frac{G(z, f)}{\tilde{b}_{n}(z)}}\right)= o(T_{f}(r))\end{equation*}
for all $r$ on a set of logarithmic density one. Hence by Lemma \ref{L-8.1} we may write
\begin{eqnarray*}
\frac{G(z, f)}{\tilde{b}_{n}(z)}=\left(f(z)+\frac{\alpha(z)}{n}\right)^{n},
\end{eqnarray*} where $\alpha\not\equiv 0$ and $T_{\alpha}(r)=o(T_{f}(r)).$ This implies that
\begin{equation}\label{E-7.6}N\left(r, \frac{1}{f(z)+\frac{\alpha(z)}{n}}\right)=o(T_{f}(r)).\end{equation} Together with \eqref{E-7.5} and \eqref{E-7.6}, it follows from the second main theorem for meromorphic mappings with moving targets from $\mathbb{C}^{m}$ into complex projective space $\mathbb{P}^{n}(\mathbb{C})$ (see \cite{ru-1, ru-1991}) that
$$T_{f}(r)\leq N\left(r, \frac{1}{f}\right)+N(r, f)+N\left(r, \frac{1}{f(z)+\frac{\alpha(z)}{n}}\right)+o(T_{f}(r))=o(T_{f}(r))$$ for all $r$ on a set of logarithmic density one. Hence we get a contradiction.
\end{proof}

In \cite{barnett-halburd-korhonen-morgan}, Barnett, Halburd, the second author and Morgan proved a basic $q$-difference Clunie lemma for meromorphic functions of one variable. In \cite{laine-yang}, Laine and Yang gave a generalized $q$-difference Clunie lemma in one complex variable by adapting the method of Yang and Ye \cite{yang-ye} to the $q$-difference case. Recently, Wang \cite{wangyue} extended the generalized $q$-difference Clunie lemma due to Laine and Yang to the case of meromorphic functions in several complex variables, and he applied the extended result to complex partial $q$-difference equations. Wang's proof relies on \cite{wenzhitao}, and so here we restate the Wang's conclusion of generalized $q$-difference Clunie lemma in several complex variables based on this version (Theorem \ref{q-LDL}) of the $q$-difference quotient lemma in several complex variables. Since the proof is almost the same as the proof of \cite[Theorem 2.1]{wangyue}, we omit it. 
\par

Define complex partial $q$-difference polynomials as follows
\begin{eqnarray}\label{E-7.1}
P(z,w)=\sum_{\lambda\in I}A_{\lambda}(z)w(z)^{l_{\lambda_{0}}}w(q_{\lambda_{1}}z+c_{\lambda_{1}})^{l_{\lambda_{1}}}\cdots w(q_{\lambda_{i}}z+c_{\lambda_{i}})^{l_{\lambda_{i}}},\end{eqnarray}
\begin{eqnarray}\label{E-7.2} Q(z,w)=\sum_{\mu\in J}B_{\mu}(z)w(z)^{l_{\mu_{0}}}w(q_{\mu_{1}}z+c_{\mu_{1}})^{l_{\mu_{1}}}\cdots w(q_{\mu_{j}}z+c_{\mu_{j}})^{l_{\mu_{j}}},\end{eqnarray}
\begin{eqnarray}\label{E-7.3} U(z,w)=\sum_{\nu\in K}C_{\nu}(z)w(z)^{l_{\nu_{0}}}w(q_{\mu_{1}}z+c_{\mu_{1}})^{l_{\nu_{1}}}\cdots w(q_{\nu_{k}}z+c_{\mu_{k}})^{l_{\nu_{k}}},
\end{eqnarray} where all coefficients $A_{\lambda}(z),$ $B_{\mu}(z)$ and $C_{\nu}(z)$ are small functions with respect to the function $w(z)$ meromorphic on $\mathbb{C}^{m},$ $I, J, K$ are three finite sets of multi-indices, and $q_{s}\in\mathbb{C}\setminus\{0, 1\},$ $c_{s}\in\mathbb{C}^{m},$ $(s\in\{\lambda_{1}, \ldots, \lambda_{i}, \mu_{1}, \ldots, \mu_{j}, \nu_{1}, \ldots, \nu_{k}\})¡£$ \par

\begin{theorem}\label{T-8.1}
Let $w$ be a nonconstant meromorphic function of zero order on $\mathbb{C}^{m},$ and let $P(z,w), Q(z, w),$ and $U(z,w)$ be complex partial $q$-difference polynomials as \eqref{E-7.1}, \eqref{E-7.2} and \eqref{E-7.3} satisfying a complex partial $q$-difference equation of the form \begin{eqnarray}
U(z,w)P(z,w)=Q(z,w).
\end{eqnarray}
 Assume that the total degree of $U(z, w)$ is equal to $n,$ and the total degree of $Q(z,w)$ is less than or equal to $n,$ and that $U(z,w)$ contains just one term of maximal total degree in $w(z)$ and its shifts.  If $q_{s}\in\mathbb{C}\setminus\{0, 1\}$ and $c_{s}\in\mathbb{C}^{m}$ for all $s\in\{\lambda_{1}, \ldots, \lambda_{i}, \mu_{1}, \ldots, \mu_{j}, \nu_{1}, \ldots, \nu_{k}\},$ then we have
\begin{equation*}
m(r, P(z,w))=o(T_{w}(r))
\end{equation*} for all $r$ on a set of logarithmic density one.
\end{theorem}

\section{Linear partial $q$-difference equations}\label{LEsec}

As far as we known, linear partial $q$-difference equations have been firstly studied by Adams \cite{adams-1, adams-2, adams-3, adams-4} in four papers during 1924-1929. He investigated a general theory for existence of solutions of the difference equations of the forms
\begin{eqnarray}\label{E9.1}
\sum_{j=0}^{n}a_{j}(z_{1}, z_{2}) f(q^{n-j}z_{1}, r^{n-j} z_{2})=0,\end{eqnarray}
\begin{eqnarray}\label{E9.2}\sum_{k=0}^{m}\sum_{j=0}^{n}a_{jk}(z_{1}, z_{2}) f(z_{1}+n-j, z_{2}+m-k)=0\end{eqnarray} and
\begin{eqnarray}\label{E9.3}\sum_{j=0}^{n}a_{j}(z_{1}, z_{2}) f(z_{1}+n-j, q^{n-j} z_{2})=0
\end{eqnarray} with analytic coefficients for two independent variables $z_{1}$ and $z_{2}$ following the method of Birkhoff \cite{birkhoff}. In this section, by making use of the value distribution results for $q$-differences in several complex variables established in this paper, we will study the growth of solutions of linear partial $q$-difference equation \begin{eqnarray*}
\sum_{j=0}^{n}A_{j}(z_{1}, \ldots, z_{m})f(q_{j}z_{1}+c_{j,1}, \ldots, q_{j}z_{m}+c_{j, m})=F(z_{1}, \ldots, z_{m})
\end{eqnarray*} with meromorphic coefficients for $m$ independent complex variables $z_{1},$ $\ldots,$ $z_{m},$ where $q_{j}\in\mathbb{C}\setminus\{0, 1\},$ $c_{j}=(c_{j,1}, \ldots, c_{j, m})\in\mathbb{C}^{m}$ for all $j=1, \ldots, n$ and $c_{0}=(c_{0, 1}, \ldots, c_{0, m})=(0, \ldots, 0).$ This is a special case of the equations \eqref{E9.1} and \eqref{E9.3}.  Note that the case of $q=1$ (that is,  equation \eqref{E9.2}) has been studied by the first author and Xu \cite{cao-xu}, recently.\par

Before the statement of the main results of this section, we give another $q$-difference version of the logarithmic derivative lemma for meromorphic functions of finite logarithmic order. As in \cite{chern}, the definition of logarithmic order is given as follows \begin{eqnarray*}
\sigma_{\log}(f)=\limsup_{r\to\infty}\frac{\log^{+} T_{f}(r)}{\log\log r}.
\end{eqnarray*} Obviously, any meromorphic function with
finite logarithmic order is of zero order. \par

\begin{lemma}\label{L9.1}
Let $f$ be a nonconstant meromorphic function of finite logarithmic order $\sigma_{\log}(f)$ on $\mathbb{C}^{m},$ let $q\in\mathbb{C}\setminus\{0, 1\}$ and let $c=(c_{1}, \ldots, c_{m})\in\mathbb{C}^{m}.$ Then
\begin{eqnarray*}
m\left(r, \frac{f(qz+c)}{f(z)}\right)+m\left(r, \frac{f(z)}{f(qz+c)}\right)=O\left((\log r)^{\sigma_{\log}(f)-1+\varepsilon}\right).
\end{eqnarray*}
\end{lemma}

\begin{proof}Since $f$ is of finite logarithmic order, $T_{f}(r)\leq(\log r)^{\sigma_{\log}(f)+\varepsilon}$ holds for any $\varepsilon>0$, provided that $r$ is sufficiently large.\par

On one hand, recall that in the proof of Theorem \ref{q-LDL} we have $$m\bigg(r, \frac{f(qz)}{f(z)}\bigg)\leq \frac{O(1)}{M} T_{f}(Mr))+O(1).$$ Take $Mr=r\log r.$ Hence we have \begin{equation}\label{E9.4} m\bigg(r, \frac{f(qz)}{f(z)}\bigg)=O\left((\log r)^{\sigma_{\log}(f)-1+\varepsilon}\right).\end{equation}\par

On the other hand, it follows by \cite[Lemma 5.2]{korhonen-2012} and the first main theorem that there exists a positive constant $K_{1},$ depending only on $\widetilde{c}_{j}=(0,\ldots, 0, c_{j}, 0,\ldots, 0)$ and $\delta^{'}\in(\frac{1}{4}, 1),$ such that \begin{eqnarray*}m\bigg(r, \frac{f(z+\tilde{c}_{j})}{f(z)}\bigg)&=&\int_{\partial B_{n}(r)}\log^{+}\left|\frac{f(z+\tilde{c}_{j})}{f(z)}\right|\eta_{m}(z)\\\nonumber
&\leq&K_{1}K_{2}(r, R)\left(T_{f}(R)+\log\frac{1}{|f(0)|}\right)
\end{eqnarray*} for all $R>r+|c_{j}|>|c_{j}|,$ where $$K_{2}(r, R)=\left(\frac{R}{r}\right)^{2m-2}\left(\frac{1}{R-(r+|c_{j}|)}\right)\left(\frac{R}{\sqrt{R^{2}-r^{2}}}
\left(\frac{R}{R-r}\right)^{1-\delta^{'}}+\frac{1}{r^{\delta^{'}}}\right).$$  Take $R=2r.$ Then we have
$$m\bigg(r, \frac{f(z+\tilde{c}_{j})}{f(z)}\bigg)=O((\log r)^{\sigma_{\log}(f)-1+\varepsilon}).$$ Take $\tilde{c}_{0}=0.$ Then $c=\tilde{c}_{1}+\cdots +\tilde{c}_{n}.$ Since
\begin{eqnarray*}
&&\frac{f(z+c)}{f(z)}\\&=&\frac{f(z+(c_{1}, \ldots, c_{m}))}{f(z+(c_{1},\ldots, c_{m-1}, 0))}
\cdot\frac{f(z+(c_{1}, \ldots, c_{m-1}, 0))}{f(z+(c_{1}, \ldots, c_{m-2}, 0, 0))}\cdots\frac{f(z+(c_{1}, 0,\ldots,0))}{f(z+(0, \ldots, 0))}\\
&=&\frac{f(z+\sum_{j=0}^{m}\tilde{c}_{j})}{f(z+\sum_{j=0}^{m-1}\tilde{c}_{j})}\cdot\frac{f(z+\sum_{j=0}^{m-1}\tilde{c}_{j})}{f(z+\sum_{j=0}^{m-2}\tilde{c}_{j})}
\cdots\frac{f(z+\sum_{j=0}^{1}\tilde{c}_{j}))}{f(z+\tilde{c}_{0})},
\end{eqnarray*}
we then get that \begin{eqnarray*}&&m\bigg(r, \frac{f(z+c)}{f(z)}\bigg)\\\nonumber
&=&O\left((\log r)^{\sigma_{\log}(f)-1+\varepsilon}\right)+O\left((\log r)^{\sigma_{\log}(f(z+\sum_{j=0}^{1}\tilde{c}_{j}))-1+\varepsilon}\right)+\ldots\\&&
+O\left((\log r)^{\sigma_{\log}(f(z+\sum_{j=0}^{m-1}\tilde{c}_{j}))-1+\varepsilon}\right).\end{eqnarray*}
We can get from \cite[Theorem 4.3]{korhonen-2012} or \cite[Theorem 2.2]{cao-xu} that $\sigma_{\log}(f)=\sigma_{\log}(f(z+\sum_{j=0}^{1}\tilde{c}_{j}))=\ldots=\sigma_{\log}(f(z+\sum_{j=0}^{m-1}\tilde{c}_{j}))=\sigma_{\log}(f(z+c)).$ Therefore,
\begin{eqnarray*}m\bigg(r, \frac{f(z+c)}{f(z)}\bigg)=O\left((\log r)^{\sigma_{\log}(f)-1+\varepsilon}\right).\end{eqnarray*}
This together with Lemma \ref{L-8.3} implies that \begin{eqnarray}\label{E9.5}m\bigg(r, \frac{f(qz+c)}{f(qz)}\bigg)=O\left((\log r)^{\sigma_{\log}(f)-1+\varepsilon}\right).\end{eqnarray}
Therefore, we get from \eqref{E9.4} and \eqref{E9.5} that \begin{eqnarray*} m\bigg(r, \frac{f(qz+c)}{f(z)}\bigg)\leq m\bigg(r, \frac{f(qz+c)}{f(qz)}\bigg)+m\bigg(r, \frac{f(qz)}{f(z)}\bigg)=O\left((\log r)^{\sigma_{\log}(f)-1+\varepsilon}\right).\end{eqnarray*} This together with Lemma \ref{L-8.2} also implies
that \begin{eqnarray*} m\bigg(r, \frac{f(z)}{f(qz+c)}\bigg)=O\left((\log r)^{\sigma_{\log}(f)-1+\varepsilon}\right),\end{eqnarray*} since
$$\frac{f(z)}{f(qz+c)}=\frac{f(\frac{1}{q}(qz_{1}+c_{1})-\frac{c_{1}}{q}, \ldots, \frac{1}{q}(qz_{m}+c_{m})-\frac{c_{m}}{q})}{f(qz_{1}+c_{1}, \ldots, qz_{m}+c_{m})}.$$
\end{proof}

Denote by $\delta_{f}(a)$ the defect relation for a meromorphic function $f$ on $\mathbb{C}^{m}$ about a value $a\in\mathbb{C}\cup\{\infty\}$,
 $$\delta_{f}(a)=1-\limsup_{r\rightarrow\infty}\frac{N(r, \frac{1}{f-a})}{T_{f}(r)}.$$
Now we state the first result on the linear homogeneous partial $q$-difference equations as follows.\par

\begin{theorem}\label{T70} Let $A_{0}, \ldots, A_{n}$ be meromorphic functions of finite logarithmic order on $\mathbb{C}^{m}$ such that there exists an integer $k\in\{0, \ldots, n\}$ satisfying
\begin{eqnarray*}
+\infty>\sigma_{\log}(A_{k})>\max\{\sigma_{\log}(A_{j}): 0\leq j\leq n, j\neq k\}, \,\,\mbox{and}\,\, \delta_{A_{k}}(\infty)>0.
\end{eqnarray*} If $f$ is a nontrivial meromorphic solution of linear partial $q$-difference equation of the homogeneous form
\begin{eqnarray}\label{E9.6}
A_{n}(z)f(q_{n}z+c_{n})+\ldots+A_{1}(z)f(q_{1}z+c_{1})+A_{0}(z)f(z)=0
\end{eqnarray} where $q_{1}, \ldots, q_{n}$ are distinct values in $\mathbb{C}\setminus\{0, 1\}$ and $c_{j}=(c_{j, 1}, \ldots, c_{j, m})\in \mathbb{C}^{m}$ for all $j=1, \ldots, n,$ then we have $$\sigma_{\log}(f)\geq\sigma_{\log}(A_{k})+1.$$
\end{theorem}

\begin{proof}[Proof of Theorem \ref{T70}] There is nothing to do if $f$ is of infinite logarithmic order. So, we may assume that $\sigma_{\log}(f)<+\infty.$ The equation \eqref{E9.6} gives
\begin{eqnarray}\label{E140}
-A_{k}&=&A_{n}\frac{f(q_{n}z+c_{n})}{f(q_{k}z+c_{k})}+\ldots+A_{k+1}\frac{f(q_{k+1}z+c_{k+1})}{f(q_{k}z+c_{k})}\\\nonumber
&&+A_{k-1}\frac{f(q_{k-1}z+c_{k-1})}{f(q_{k}z+c_{k})}+\ldots+A_{0}\frac{f(z)}{f(q_{k}z+c_{k})}.
\end{eqnarray}
Since $\delta:=\delta_{A_{k}}(\infty)>0,$ by the definition we get that \begin{eqnarray}\label{E160}N(r, A_{k})<(1-\frac{\delta}{2})T_{A_{k}}(r).\end{eqnarray} Lemma \ref{L9.1} therefore yields \begin{eqnarray}\label{E150}
m\bigg(r, \frac{f(q_{j}z+c_{j})}{f(q_{k}z+c_{k})}\bigg)=O\left((\log r)^{\sigma_{\log}(f)-1+\varepsilon}\right)
\end{eqnarray}for any $\varepsilon(>0),$ where $j\in \{0, 1, \ldots, n\}\setminus\{k\}$ and $c_{0}=0.$ Then from \eqref{E140}, \eqref{E160} and \eqref{E150}, we have
\begin{eqnarray}\label{E170}
\frac{\delta}{2}T_{A_{k}}(r)&\leq& T_{A_{k}}(r)-N(r, A_{k})\\\nonumber
&=&m(r, A_{k})\\\nonumber
&\leq&\sum_{0\leq j\leq n; j\neq k}m(r, A_{j})+\sum_{0\leq j\leq n; j\neq k}m\bigg(r, \frac{f(q_{j}z+c_{j})}{f(q_{k}z+c_{k})}\bigg)+O(1)\\\nonumber
&\leq& \sum_{0\leq j\leq n; j\neq k}T_{A_{j}}(r)+O\left((\log r)^{\sigma_{\log}(f)-1+\varepsilon}\right).
\end{eqnarray}
Set $$\max\{\sigma_{\log}(A_{j}): 0\leq j\leq n, j\neq k\}:=\sigma<\sigma_{\log}(A_{k}):=\rho$$  such that $\rho-\sigma>3\varepsilon>0.$
Then for the above $\varepsilon>0,$
$$T_{A_{j}}(r)<(\log r)^{\sigma+\varepsilon}<(\log r)^{\rho-2\varepsilon}$$
holds for all $0\leq j\leq n, j\neq k.$ From the definition of order of $A_{k},$ there exists a sequence $\{r_{m}\}_{m=1}^{+\infty}$ (with $r_{m}\rightarrow\infty$ as $m\rightarrow\infty$)
such that $$T_{A_{k}}(r_{m})>(\log r_{m})^{\rho-\varepsilon}$$ for sufficiently large $r_{m}.$ Hence, it follows  from \eqref{E170} that
\begin{eqnarray*}
\frac{\delta}{2}(\log r_{m})^{\rho-\varepsilon}&\leq& (n-1) (\log r_{m})^{\sigma+\varepsilon}+O((\log r_{m})^{\sigma_{\log}(f)-1+\varepsilon})\\
&\leq& (n-1) (\log r_{m})^{\rho-2\varepsilon}+O((\log r_{m})^{\sigma_{\log}(f)-1+\varepsilon}),
\end{eqnarray*} and thus \begin{eqnarray*}
(\frac{\delta}{2}+o(1))(\log r_{m})^{\rho-\varepsilon}&\leq& O((\log r_{m})^{\sigma_{\log}(f)-1+\varepsilon}).
\end{eqnarray*} This implies $\sigma_{\log}(f)\geq \rho+1=\sigma_{\log}(A_{k})+1.$
\end{proof}

Obviously, if the dominant coefficient $A_{k}$ is holomorphic, then $\delta_{A_{k}}(\infty)>0.$ Hence we get immediately the following corollary.\par

\begin{corollary} Let $A_{0}, \ldots, A_{n}$ be entire functions of finite logarithmic order on $\mathbb{C}^{m}$ such that there exists an integer $k\in\{0, \ldots, n\}$ satisfying
\begin{eqnarray*}
+\infty>\sigma_{\log}(A_{k})>\max\{\sigma_{\log}(A_{j}): 0\leq j\leq n, j\neq k\}.
\end{eqnarray*} If $f$ is a nontrivial entire solution of linear partial $q$-difference equation of the homogeneous form
\begin{eqnarray*}
A_{n}(z)f(q_{n}z+c_{n})+\ldots+A_{1}(z)f(q_{1}z+c_{1})+A_{0}(z)f(z)=0
\end{eqnarray*} where $q_{1}, \ldots, q_{n}$ are distinct values in $\mathbb{C}\setminus\{0, 1\}$ and $c_{j}=(c_{j, 1}, \ldots, c_{j, m})\in \mathbb{C}^{m}$ for all $j=1, \ldots, n,$ then we have $$\sigma_{\log}(f)\geq\sigma_{\log}(A_{k})+1.$$
\end{corollary}

Next we show another result on the linear nonhomogeneous partial $q$-difference equations.\par

\begin{theorem}\label{T80} Let a meromorphic function $f$ on $\mathbb{C}^{m}$ be a solution of linear partial difference equation of the nonhomogeneous form
\begin{eqnarray}\label{E3.120}
A_{n}(z)f(q_{n}z+c_{n})+\ldots+A_{1}(z)f(q_{1}z+c_{1})+A_{0}(z)f(z)=F(z),
\end{eqnarray} where $q_{1}, \ldots, q_{n}$ are distinct values in $\mathbb{C}\setminus\{0, 1\},$ $c_{j}\in\mathbb{C}^{m}$ for all $j=1, \ldots, n,$ and meromorphic coefficients $A_{0}, \ldots, A_{n}, F(\not\equiv 0)$ on $\mathbb{C}^{m}$ are small functions with respect to $f,$ that is
 $$\max\{T_{A_{0}}(r), \ldots, T_{A_{n}}(r), T_{F}(r)\}=o(T_{f}(r)).$$ If $\delta_{f}(0)>0,$ then $f$ is not of zero order.
\end{theorem}

\begin{proof} Since the defect of zeros of $f$ satisfies $\delta_{f}(0)>0,$  we have
$$N\bigg(r, \frac{1}{f}\bigg)<\bigg(1-\frac{\delta_{f}(0)}{2}\bigg)T_{f}(r)$$ for sufficiently large $r.$ It follows from the equation \eqref{E3.120} that \begin{eqnarray*}\frac{1}{f(z)}&=&\frac{1}{F(z)}\left(A_{n}(z)\frac{f(q_{n}z+c_{n})}{f(z)}+A_{n-1}(z)\frac{f(q_{n-1}z+c_{n-1})}{f(z)}\right.\\&&\left.+\ldots+A_{1}(z)\frac{f(q_{1}z+c_{1})}{f(z)}+A_{0}(z)\right).\end{eqnarray*}
Assuming $\zeta(f)=0,$ then we get from the first main theorem and Theorem \ref{q-LDL} that\begin{eqnarray*}m\bigg(r, \frac{1}{f}\bigg)&\leq& m\left(r, \frac{1}{F}\right)+\sum_{j=1}^{n}m\left(r,\frac{f(q_{j}z+c_{j})}{f(z)}\right)+\sum_{j=0}^{n}m\left(r,  A_{j}\right)+O(1)\\&\leq&T_{F}(r)+\sum_{j=0}^{n}T_{A_{j}}(r)+\sum_{j=1}^{n}m\left(r,\frac{f(q_{j}z+c_{j})}{f(z)}\right)+O(1)\\
 &=&o(T_{f}(r))\end{eqnarray*} for $r\not\in E$ where $E$ is a set of logarithmic density one. This gives
\begin{eqnarray*} T_{f}(r)+O(1)&=&T_{\frac{1}{f}}(r)=m\bigg(r, \frac{1}{f}\bigg)+N\bigg(r, \frac{1}{f}\bigg)\\
&\leq&N\bigg(r, \frac{1}{f}\bigg)+o(T_{f}(r))\\
&\leq&\bigg(1-\frac{\delta_{f}(0)}{2}\bigg)T_{f}(r)+o(T_{f}(r))
 \end{eqnarray*}  for all $r\not\in E.$ Therefore, we get
$$\delta_{f}(0)T(r,f)\leq o(T(r,f))$$
for all $r\not\in E.$ This is a contradiction to $\delta_{f}(0)>0.$ Therefore, $f$ is not of zero order.
\end{proof}

\noindent{\bf Concluding remark.} Corresponding to the equations \eqref{E9.1} and \eqref{E9.3}, it should be interesting to study the growth of meromorphic solutions of linear partial $q$-difference equations of the form \begin{eqnarray*}
\sum_{j=0}^{n}A_{j}(z_{1}, \ldots, z_{m})f(q_{j, 1}z_{1}+c_{j,1}, \ldots, q_{j, m}z_{m}+c_{j, m})=F(z_{1}, \ldots, z_{m}),
\end{eqnarray*} or shortly written as
\begin{eqnarray*}
\sum_{j=0}^{n}A_{j}(z)f(q_{j}\cdot z+c_{j})=F(z),
\end{eqnarray*}
with meromorphic coefficients for $m$ independent complex variables $z_{1},$ $\ldots,$ $z_{m},$ where $q_{j}=(q_{j, 1}, \ldots, q_{j, m})\in\mathbb{C}^{m}\setminus\{0, 1\}$ and $c_{j}=(c_{j,1}, \ldots, c_{j, m})\in\mathbb{C}^{m}$ for all $j=1, \ldots, n.$ In this paper, we just investigate the special case when $q=(\tilde{q}, \ldots, \tilde{q})$ by our $q$-difference version of the logarithmic derivative lemma in several complex variables. This leaves an open question as follows.\par

\medskip

\noindent{\bf Open Question.} It is open whether our main theorems for the value distribution theory (essentially, Theorem~\ref{q-LDL}) are still true for a general value $q\in\mathbb{C}^{m}\setminus\{0, 1\}$ instead of the $q=(\tilde{q}, \ldots, \tilde{q})\in\mathbb{C}^{m}\setminus\{0, 1\}.$\vskip 6pt






\end{document}